\documentclass[reqno]{amsart}
\usepackage{fullpage}
\usepackage{array}
\usepackage{mathrsfs}
\usepackage{multirow}
\usepackage{hyperref}
\usepackage{graphicx,color}
\usepackage{float}
\usepackage{ytableau}
\usepackage{amsmath}
\usepackage{comment}
\usepackage{amssymb}

\newtheorem{theorem}{Theorem}[section]

\newtheorem{lemma}[theorem]{Lemma}
\newtheorem{corollary}[theorem]{Corollary}
\newtheorem{remark}[theorem]{Remark}
\newtheorem{proposition}[theorem]{Proposition}
\theoremstyle{definition}
\newtheorem{definition}[theorem]{Definition}

\newcommand{\Id}{\operatorname{Id}}
\newcommand{\var}{\operatorname{var}}

\newcommand{\simeqT}{\mathrel{\sim_T}}

\newcommand{\eps}{\varepsilon}

\usepackage{xcolor}

\newcommand{\black}{\color{black}}

\begin{document}
\title{Varieties of associative algebras  with quadratic codimension growth}

\author{Wesley Q. Cota$^{a,1,*}$}
\author{Luiz H. S. Matos$^{b,2}$}
\author{Pedro Quintino$^{c}$}

\thanks{{\it E-mail address:} wesleyqc@ime.usp.br (Cota), luizmatos@ufmg.br (Matos), pedro.quintino@unimontes.br (Quintino).}

\dedicatory{$^a$ IME, USP, Rua do Matão 1010, 05508-090, São Paulo, Brazil 
\\  $^b$ ICEx, UFMG, Avenida Antonio Carlos 6627, 31123-970, Belo Horizonte, Brazil \\
$^c$ CCET, Unimontes, Avenida Rui Braga, 39401-089, Montes Claros,
Brazil}

\thanks{\footnotesize $^1$ Supported by FAPESP, grant no.~2025/05699-0 and 2026/06588-0.}
\thanks{\footnotesize $^{2}$ Partially supported by FAPEMIG}
\thanks{\footnotesize $^{*}$ Corresponding author}

\subjclass[2020]{Primary 16R10; Secondary 16P90, 20C30.}

\keywords{polynomial identities, fundamental algebras, codimension growth}

\begin{abstract}
We classify, up to PI-equivalence, associative algebras over a field of characteristic zero whose codimension sequences have quadratic growth. More precisely, making use of fundamental algebras, we prove that every such algebra is PI-equivalent to a finite direct sum of algebras generating minimal varieties of at most quadratic codimension growth, together with a possible nilpotent summand.

\end{abstract}

\maketitle

\section{Introduction}
Determining the polynomial identities of an associative algebra is difficult even for matrix algebras. Over a field $F$ of characteristic zero, every polynomial identity of an algebra $A$ is a consequence of its multilinear identities. This leads to the study of the codimension sequence
\[
   c_n(A)=\dim_F\frac{P_n}{P_n\cap\operatorname{Id}(A)},
   \qquad n\geq1,
\]
where $P_n$ is the space of multilinear polynomials in $n$ variables and $\operatorname{Id}(A)$ is the $T$-ideal of polynomial identities of $A$. The number $c_n(A)$ counts the linearly independent multilinear polynomials modulo the identities of $A$. Its asymptotic behavior gives a quantitative measure of the size of the variety generated by $A$.

Regev \cite{RG} proved that, for any PI-algebra $A$, the general bound $c_n(A)\leq \dim_F P_n=n!$ can be replaced by an exponential bound. Later, Kemer \cite{Kem1} showed that these codimensions are either polynomially bounded or grow exponentially. Subsequently, Giambruno and Zaicev \cite{GiZai} proved that the exponential rate of growth of $c_n(A)$ exists and is a nonnegative integer, called
the PI-exponent of $A$. For nonnilpotent algebras with polynomial codimension growth, this exponent is always one and therefore does not distinguish the different growth rates within this class. The degree of polynomial growth thus provides a finer invariant, leading naturally to the problem of classifying varieties
according to this degree.

A variety of polynomial codimension growth is called \emph{minimal} if every proper subvariety has codimension growth of strictly smaller degree. In particular, a variety of quadratic growth is minimal if every proper subvariety has at most linear growth. The study of minimal varieties is motivated by the question of whether they suffice to describe arbitrary varieties of polynomial growth. More precisely, is every algebra of polynomial codimension growth PI-equivalent to a finite direct sum of algebras generating minimal varieties of polynomial growth, together with a possible nilpotent summand? This question remains open in general. Here, we recall that two algebras are PI-equivalent if they satisfy the same set of polynomial identities.

For varieties generated by unitary algebras, several classification results are known. Giambruno, La Mattina and Petrogradsky \cite{Petro} classified, up to PI-equivalence, the unitary algebras whose codimensions have at most cubic growth. In \cite{Mara3}, de Oliveira and Vieira obtained the corresponding classification for growth of degree four. In the same setting, Giambruno, La Mattina and Zaicev \cite{GiamLaMaZai} characterized minimal varieties of arbitrary polynomial growth and gave a procedure for constructing their $T$-ideals. These results concern varieties generated by unitary algebras and leave open the corresponding classification problems without this assumption.

For arbitrary associative algebras, Giambruno and La Mattina \cite{GLM2005} classified the varieties of at most linear codimension growth and identified the two minimal varieties of linear growth, generated by $A_1$ and $A_1^*$. Subsequently, Alves Jorge and Vieira \cite{JorgeVieira2006}
proved that
\[
   \operatorname{var}(A_2),\qquad
   \operatorname{var}(A_4),\qquad
   \operatorname{var}(A_4^*),\qquad
   \operatorname{var}(A_5),\qquad
   \operatorname{var}(A_6)
\]
are precisely the five minimal varieties of quadratic growth. These results left open the question of whether every variety of quadratic growth can be generated by a finite direct sum of some of these seven algebras, possibly together with
a nilpotent summand.

In this paper, we give an affirmative answer to this question. Let
\[
   \mathcal L=\{A_1,A_1^*\}
   \quad\text{and}\quad
   \mathcal Q=\{A_2,A_4,A_4^*,A_5,A_6\},
\]
where the algebras in these families are recalled in Section~3. Our main result is the following.

\begin{theorem}
\label{thm:introduction-main}
Let $\mathcal V$ be a variety of associative algebras over a field of characteristic zero. Then $\mathcal V$ has quadratic codimension growth if and only if it is generated by an algebra of the form
\[
   N\oplus\bigoplus_{M\in\mathcal S}M,
\]
where $N$ is a nilpotent algebra and
$\mathcal S\subseteq\mathcal L\cup\mathcal Q$ satisfies
$\mathcal S\cap\mathcal Q\neq\emptyset$.
\end{theorem}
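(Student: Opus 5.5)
The ``if'' direction is the easy half and I would do it first. For a direct sum, $\operatorname{Id}(A\oplus B)=\operatorname{Id}(A)\cap\operatorname{Id}(B)$, which gives
\[
\max\{c_n(A),c_n(B)\}\le c_n(A\oplus B)\le c_n(A)+c_n(B).
\]
A nilpotent summand $N$ has $c_n(N)=0$ for $n$ large. Each algebra in $\mathcal L$ has linear growth and each algebra in $\mathcal Q$ has quadratic growth, by \cite{GLM2005} and \cite{JorgeVieira2006}. Since $\mathcal S$ is finite and meets $\mathcal Q$, the sum has $c_n$ of order exactly $n^2$.

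For the ``only if'' direction I would use Kemer's structure theory for varieties of polynomial growth. A variety of polynomial growth is generated by a finite-dimensional algebra $A$, and $A$ may be replaced by a PI-equivalent finite direct sum
\[
A\simeqT B_1\oplus\cdots\oplus B_m ,
\]
where each $B_i$ is either nilpotent or of the form $B_i=F+J_i$ with $F$ one-dimensional semisimple and $J_i$ the Jacobson radical. Moreover $c_n(A)$ has degree $k$ if and only if $\max_i$ of the largest $q$ with $J_i^{(q)}$-type nonvanishing products $J_i F J_i\cdots$ of length $q+1$ equals $k$. This is the ``fundamental algebra'' reduction mentioned in the abstract: each $B_i$ can be taken fundamental with $\dim B_i/J(B_i)=1$. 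Collect the nilpotent summands into $N$. Each remaining $B_i$ then has codimension growth of degree at most $2$, and at least one has degree exactly $2$.

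The core step is a local classification. I would show that every fundamental algebra $B=F e+J$ with $c_n(B)$ of degree $\le 2$ is PI-equivalent to a direct sum of members of $\mathcal L\cup\mathcal Q$ together with a nilpotent algebra. I would decompose $J$ by Peirce relative to the idempotent $e$:
\[
J=J_{11}+J_{10}+J_{01}+J_{00}.
\]
Degree $\le 2$ forces strong restrictions. Any nonzero product of radical elements separated by $e$ contributes to the degree, so products such as $J_{10}J_{01}$, $J_{11}^3$, $J_{11}J_{10}$, $J_{01}J_{10}$ and their variants are bounded. One then lists the possible ``shapes'' of nonzero monomials of degree $\le 2$ in the radical. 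For each shape I would exhibit a minimal algebra among $A_1,A_1^*,A_2,A_4,A_4^*,A_5,A_6$ that embeds as a subquotient realizing it, and show $\operatorname{Id}(B)\supseteq\bigcap\operatorname{Id}(M)$ for the corresponding $M$'s, up to a nilpotent correction. The reverse inclusion comes from $\operatorname{var}(M)\subseteq\operatorname{var}(B)$. The cleanest way to show the inclusion $\operatorname{Id}(B)\supseteq\bigcap\operatorname{Id}(M)$ is via cocharacters. Polynomial growth of degree $\le 2$ means the multiplicities $m_\lambda$ are nonzero only for $n-\lambda_1\le 2$. So it suffices to compare the finitely many highest weight vectors in shapes $(n),(n-1,1),(n-2,2),(n-2,1,1)$ modulo $\operatorname{Id}(B)$ with those of the candidate direct sum. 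Here the known cocharacters of the seven algebras, recalled in Section~3, are used.

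I expect this local matching to be the main obstacle. One must show that the identities of $B$ are exactly generated by the intersection, i.e., that no ``mixed'' algebra produces a highest weight vector combination that is not separated by the minimal ones. I would first try to prove a general lemma: if two highest weight vectors of the same shape are linearly independent modulo $\operatorname{Id}(B)$, then one can find substitutions in $B$ realizing each independently, and each such substitution generates a copy of the corresponding minimal algebra inside $\operatorname{var}(B)$. This would give $m_\lambda(B)\le\sum_{M\in\mathcal S}m_\lambda(M)$ for large $n$, with equality from the inclusion of varieties. Finally, gluing the local results over $i$ and absorbing all nilpotent pieces into $N$ gives the required form, with $\mathcal S\cap\mathcal Q\ne\emptyset$ forced by the degree being exactly $2$.
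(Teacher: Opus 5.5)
The ``if'' direction is fine. So, in outline, is the reduction to a nilpotent algebra plus fundamental summands $F+J$, with two fixes. You need to extend scalars to $\overline F$ first and descend at the end, since over a non-closed field the semisimple part need not be $F$. Summands isomorphic to $F$, which are not of the required form, must be absorbed using $F\in\operatorname{var}(M)$ for every model $M$.

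\textbf{The gap is the core step.} You only announce the inclusion $\operatorname{Id}(\bigoplus_{M\in\mathcal S}M)\subseteq\operatorname{Id}(B)$ for a fundamental $B=F+J$. You never say which conditions on the Peirce components select which $M$. Your proposed ``general lemma'' is not a precise statement: it is unclear how a substitution ``generates'' a minimal algebra, or why independent highest weight vectors should be accounted for by different summands (already $m_{(n-1,1)}(A_4)=2$ inside one minimal variety).

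The count you aim for is also the wrong target. The inclusion $\operatorname{var}(\bigoplus_{M\in\mathcal S}M)\subseteq\operatorname{var}(B)$ gives $m_\lambda(\bigoplus_{M\in\mathcal S}M)\le m_\lambda(B)$, not $\sum_{M\in\mathcal S}m_\lambda(M)\le m_\lambda(B)$, and these differ in general. For instance, $m_{(n)}$ of any algebra is at most $1$, whereas $\sum_{M\in\mathcal S}m_{(n)}(M)=|\mathcal S|$. Similarly $A_1\oplus A_4\sim_T A_4$, while $m_{(n-1,1)}(A_1)+m_{(n-1,1)}(A_4)=3$. So the claimed equality is false. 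Even if you proved $m_\lambda(B)\le\sum_{M}m_\lambda(M)$, it would not give $B\sim_T\bigoplus_{M}M$. What is needed is $m_\lambda(B)\le m_\lambda(\bigoplus_{M}M)$, and that is the whole difficulty.

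\textbf{The missing idea}, which is how the paper proceeds, makes cocharacters unnecessary.
\begin{enumerate}
\item Since the growth degree of a nonnilpotent fundamental algebra equals $s_B$, growth of degree at most $2$ gives $J^3=\{0\}$. This, not a vague boundedness of products such as $J_{11}^3$, is the constraint that matters.
\item Hence a multilinear $f=\sum_\sigma\lambda_\sigma x_{\sigma(1)}\cdots x_{\sigma(n)}$ is an identity of $B$ as soon as it vanishes on evaluations with at most two Peirce-homogeneous radical entries $a,b$ and $1_F$ elsewhere. Such an evaluation equals $\alpha ab+\beta ba$, where $\alpha,\beta$ are partial sums of the $\lambda_\sigma$ depending only on the positions and Peirce types of $a$ and $b$.
\item For each type with a nonzero product, pick $u,v$ in a model algebra with the same Peirce types relative to its idempotent and exactly one of $uv,vu$ nonzero. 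Then $f\in\operatorname{Id}(M)$ kills the corresponding coefficient.
\item For $a,b\in J_{11}$, first use $\alpha+\beta=0$ (as $f\in\operatorname{Id}(F)$) to rewrite the value as $\alpha[a,b]$.
\end{enumerate}

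This pins down the detectors exactly: $J_{10},J_{01}$ for $A_1,A_1^*$; $[J_{11},J_{11}]$ for $A_2$; $J_{10}J_{00},J_{00}J_{01}$ for $A_4,A_4^*$; $J_{01}J_{10}$ for $A_5$; and $J_{10}J_{01}$ for $A_6$. By contrast, $J_{11}J_{10}$ and $J_{01}J_{11}$ are already controlled by $A_1$ and $A_1^*$, and $J_{00}^2$ is invisible for $n\geq3$. For the easy inclusion with $A_5$ and $A_6$, one passes to $B/J_{10}J_{01}$, resp. $B/J_{01}J_{10}$, since $ab$ and $ba$ may both be nonzero. Without this case analysis, or an equivalent one, the ``only if'' direction is not proved.
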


This theorem extends the classification of minimal varieties obtained in 2006 to all varieties of quadratic growth. At the same time, it establishes that the known minimal generators are sufficient in degree two, answering the general question above for quadratic codimension growth.

The proof uses the theory of fundamental algebras. Every finite-dimensional algebra is PI-equivalent to a finite direct sum of fundamental algebras \cite{AGP2020}. The cocharacter characterization of these algebras obtained by Giambruno, Polcino Milies and Zaicev \cite{GPMZ2020}, together with the decomposition and fusion results of Giambruno, Quintino and Vieira \cite{GQV2026}, provides a reduction of the classification problem to a single fundamental algebra, apart from a nilpotent summand. In the quadratic case, the relevant fundamental algebra has radical of nilpotency index three.

The main step is to show how the radical components and their products determine the polynomial identities of this fundamental algebra. We construct a direct sum $B(A)$ of algebras from $\mathcal L\cup\mathcal Q$ and prove that
\[
   \operatorname{Id}(A)=\operatorname{Id}(B(A)).
\]
The quadratic growth ensures that at least one summand belongs to $\mathcal Q$. Combining this description with the reduction to fundamental algebras yields Theorem~\ref{thm:introduction-main}.

The paper is organized as follows. Section~2 recalls the necessary facts on codimensions, cocharacters and fundamental algebras. Section~3 reviews the model algebras and the minimal varieties of quadratic growth. Section~4 constructs the algebra $B(A)$ and compares its identities with those of $A$. The final section provides the complete proof of Theorem \ref{thm:introduction-main}.

\black

\section{PI-algebras and fundamental algebras}
\label{sec:preliminaries}

Throughout the paper, \(F\) denotes a field of characteristic zero and all algebras are associative \(F\)-algebras, not necessarily unital. Let
\[
F\langle X\rangle=F\langle x_1,x_2,\ldots\rangle
\]
be the free associative algebra generated by a countable set \(X=\{x_1,x_2,\ldots\}\) of variables. A polynomial
\(f=f(x_1,\ldots,x_n)\in F\langle X\rangle\) is called a polynomial
identity of an \(F\)-algebra \(A\) if
$f(a_1,\ldots,a_n)=0,$ for all $a_1,\ldots,a_n\in A.$ In this case, we write \(f\equiv 0\) on \(A\). The set of all polynomial
identities of \(A\) is denoted by
\[
\operatorname{Id}(A)
=
\{f\in F\langle X\rangle\mid f\equiv 0\text{ on }A\}.
\] This set is an ideal of $F\langle X\rangle$ invariant under every endomorphism of the free algebra and is therefore a $T$-ideal, which is called the $T$-ideal of $A$. Since $\operatorname{char}F=0$,
every $T$-ideal is determined by its multilinear polynomials.

A class \(\mathcal V\) of associative \(F\)-algebras is called a
\emph{variety} if there exists a set of polynomials
\(S\subseteq F\langle X\rangle\) such that $\mathcal V
=
\{B\mid f\equiv 0\text{ on }B
\text{ for every }f\in S\}.$ For an \(F\)-algebra \(A\), we denote by
\[
\operatorname{var}(A)
=
\{B\mid \operatorname{Id}(A)\subseteq\operatorname{Id}(B)\}
\]
the variety generated by \(A\). Two algebras \(A\) and \(B\) are said
to be \emph{PI-equivalent} (or \emph{$T$-equivalent}) if
$\operatorname{Id}(A)=\operatorname{Id}(B)$, and we write \(A\sim_T B\) in this case.

\subsection{Codimensions, cocharacters and polynomial growth}

For $n\geq 1$, let $P_n$ be the space of multilinear polynomials in
$n$ variables and set
\[
    P_n(A)=\frac{P_n}{P_n\cap\operatorname{Id}(A)}.
\]
The integer  $c_n(A)=\dim_F P_n(A)$ is the $n$th codimension of $A$.  We say that $A$ has \emph{polynomial
codimension growth} if $c_n(A)\leq an^b$ for  suitable constants $a,b>0$ and
all $n\geq1$.  More specifically, we say that $A$ has codimension growth of
degree $d$ if there exist constants $a,b>0$ and $n_0$ such that
\[
       an^d\leq c_n(A)\leq bn^d
       \qquad\text{for every }n\geq n_0.
\]

For $\mathcal{V}=\textnormal{var}(A)$ we define $c_n(\mathcal{V})=c_n(A)$, for all $n\geq 1$.

Regev \cite{RG} proved that the sequence of codimensions of every PI-algebra is exponentially
bounded.  Kemer subsequently established that, in characteristic zero, the
polynomial case is separated from the exponential case by two distinguished
algebras.  Let $\mathcal G$ be the infinite-dimensional Grassmann algebra and
let $UT_2$ be the algebra of $2\times2$ upper triangular matrices over $F$.

\begin{theorem}[\cite{Kem1}]
\label{thm:kemer-polynomial-growth}
Let $A$ be a PI-algebra over a field of characteristic zero.  Then the
codimension sequence of $A$ is polynomially bounded if and only if $\mathcal G,UT_2\notin\operatorname{var}(A).$
\end{theorem}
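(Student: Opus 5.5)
The statement is Kemer's theorem. I would treat the two implications separately.

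The necessity direction is routine. The first step is to compute the codimensions of the two model algebras. For the Grassmann algebra one shows $c_n(\mathcal G)=2^{n-1}$. The identities of $\mathcal G$ are generated by $[[x_1,x_2],x_3]$, and $P_n$ modulo this $T$-ideal has a basis of products $x_{i_1}\cdots x_{i_k}[x_{j_1},x_{j_2}]\cdots[x_{j_{2m-1}},x_{j_{2m}}]$ with increasing index conditions. These are linearly independent on generic Grassmann elements, and there is one for each even subset, giving $2^{n-1}$.

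For $UT_2$ one shows $c_n(UT_2)=2^{n-1}(n-2)+2$. The $T$-ideal is generated by $[x_1,x_2][x_3,x_4]$, and the count is a standard basis computation.

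If $\mathcal G$ or $UT_2$ lies in $\operatorname{var}(A)$, then $\operatorname{Id}(A)\subseteq\operatorname{Id}(\mathcal G)$ or $\operatorname{Id}(A)\subseteq\operatorname{Id}(UT_2)$. Hence $c_n(A)$ is at least $2^{n-1}$, which is not polynomially bounded.

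For sufficiency, assume $\mathcal G,UT_2\notin\operatorname{var}(A)$. The plan is to pass to a finite-dimensional model and read off structure.

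First, $A$ is a PI-algebra not containing $\mathcal G$ in its variety. By Kemer's representability theory, $\operatorname{var}(A)=\operatorname{var}(B)$ for some finite-dimensional algebra $B$; the Grassmann envelope is not needed because $\mathcal G\notin\operatorname{var}(A)$. I would justify this via the $\mathbb Z_2$-graded structure theory. Alternatively, I would use the cocharacter criterion: $\mathcal G\notin\operatorname{var}(A)$ forces the cocharacter $\chi_n(A)=\sum m_\lambda\chi_\lambda$ to be supported in a hook, and then in a strip of bounded height.

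Next, write $B=B_{ss}+J$ by Wedderburn–Malcev. Since $UT_2\notin\operatorname{var}(B)$, no two distinct simple components $B_i,B_k$ satisfy $B_iJB_k\neq0$; otherwise a copy of an algebra PI-equivalent to $UT_2$ would appear inside $B$. Likewise, no simple component can be $M_k(F)$ with $k\ge2$ (after extending scalars to an algebraically closed field), since $UT_2\subseteq M_2$. Thus $B\sim_T C_1\oplus\cdots\oplus C_m\oplus N$ with $\dim (C_i)_{ss}\le1$ and $N$ nilpotent.

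Finally, for $C=F+J$ with $J^q=0$, a nonzero multilinear monomial evaluation uses at most $q-1$ radical elements. The remaining variables are evaluated on the unit $e$ of $F$. Since $e$ commutes modulo the Peirce pieces $eJe,eJ(1-e),\dots$, a polynomial is determined by the choice of at most $q-1$ distinguished variables and their order. This yields $c_n(C)\le\sum_{k<q}\binom nk k!\cdot\mathrm{const}$, which is polynomial of degree $q-1$. Since $c_n$ of a direct sum is at most the sum of the codimensions, $c_n(A)$ is polynomially bounded.

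The hardest step is the reduction to a finite-dimensional algebra without invoking the full Kemer theory. I would first try the cocharacter route. By Amitsur–Regev, $\mathcal G\notin\operatorname{var}(A)$ confines $m_\lambda\ne0$ to $\lambda$ with $\lambda_{k+1}$ bounded. Then $UT_2\notin\operatorname{var}(A)$ should bound $n-\lambda_1$. The idea is to show that large $\lambda_2$ produces $[x,y][z,w]$-type evaluations realizing $UT_2$. The main obstacle is proving that $\sum_{n-\lambda_1\le c}m_\lambda d_\lambda$ is polynomial, which also requires the multiplicities to be polynomially bounded.
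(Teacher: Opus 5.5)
The paper does not prove this statement. It quotes it from Kemer and uses it as a black box, so your argument is an independent route, and its main line is correct.

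\emph{Necessity.} Comparing with $c_n(\mathcal G)=2^{n-1}$ and $c_n(UT_2)=2^{n-1}(n-2)+2$ is fine.

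\emph{Sufficiency.} You invoke representability (the paper's Theorem~\ref{thm:representability}) and pass to $\overline F$. You then observe that $UT_2\notin\operatorname{var}(B)$ forces every simple component to be $F$ and $e_iJe_k=\{0\}$ for $i\neq k$; otherwise $Fe_i+Fe_k+Fj\cong UT_2$ for any $0\neq j\in e_iJe_k$. Finally you bound $c_n(F+J)$ directly. This is the standard modern derivation.

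What it buys: once a finite-dimensional model exists, the theorem reduces to Wedderburn--Malcev plus Peirce bookkeeping of the kind the paper uses in Lemma~\ref{lem:transfer}. Your count also gives the bound $O(n^{s})$ directly, without the bounded-multiplicity input that Lemma~\ref{lem:polynomial-upper-bound} draws from Theorem~\ref{thm:bounded-colength}. The cost is that all the depth moves into the representability theorem, which is far harder than the statement being proved. The paper's bare citation commits to no proof.

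Two steps deserve a line each.
\begin{itemize}
\item The equivalence $B\sim_T\bigoplus_i(B_i+J)$ needs two remarks. Each $B_i+J$ is a subalgebra. A monomial evaluated on basis elements from two distinct components contains either a factor $e_ie_k=0$ or a factor in $e_iJ^re_k\subseteq e_iJe_k=\{0\}$.
\item In the count for $C=F+J$, your phrase ``a polynomial is determined by the choice of at most $q-1$ distinguished variables and their order'' is imprecise. What is true is that the value of a monomial depends only on the relative order of the $k$ radical variables and on which of the $k+1$ gaps contain an occurrence of $1_F$, because $1_F$ is idempotent. Hence the linear map sending $f$ to its coefficient sums over these classes has kernel inside $\operatorname{Id}(C)$. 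This gives $c_n(C)\le\sum_{k<q}\binom{n}{k}k!\,2^{k+1}$.
\end{itemize}

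Finally, drop the closing cocharacter paragraph, because as sketched it assigns the roles incorrectly.
\begin{itemize}
\item The hook bound $\lambda_{k+1}\le l$ holds for every PI-algebra by Amitsur--Regev, irrespective of $\mathcal G$.
\item $UT_2\notin\operatorname{var}(A)$ alone cannot bound $n-\lambda_1$: $UT_2\notin\operatorname{var}(\mathcal G)$, while $\chi_n(\mathcal G)=\sum_k\chi_{(n-k,1^k)}$.
\end{itemize}
Deriving bounds on $n-\lambda_1$ and on the multiplicities from the exclusion of the two algebras is the real content of the theorem, so that route is not a proof. Your main line does not need it.
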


Consequently, over a field of characteristic zero, the codimension sequence of every PI-algebra is either polynomially bounded or grows exponentially.

Another consequence of Kemer's theory is the following representability result (see~\cite{GiambrunoZai}). For affine representability, see also~\cite{AKK2016}.

\begin{theorem}
\label{thm:representability}
Let $\mathcal V$ be a variety of associative algebras over a field of
characteristic zero.  Then either $\mathcal G\in\mathcal V$ or there exists a finite-dimensional algebra $A$ such that $\mathcal V=\operatorname{var}(A)$.
\end{theorem}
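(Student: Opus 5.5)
The statement is Kemer's representability theorem, so the plan is to assemble it from the standard steps of Kemer's theory rather than prove it from scratch. Let $\mathcal V$ be a variety with $\mathcal G\notin\mathcal V$. The first step is to show that $\mathcal V$ satisfies a Capelli identity
\[
c_m(x_1,\ldots,x_m;y_1,\ldots,y_{m-1})=\sum_{\sigma\in S_m}(\operatorname{sgn}\sigma)\,x_{\sigma(1)}y_1x_{\sigma(2)}y_2\cdots y_{m-1}x_{\sigma(m)}
\]
for some $m$. I would do this through cocharacters. The cocharacter of $\mathcal G$ is $\sum_{\lambda}\chi_\lambda$, with $\lambda$ running over all hooks $(k,1^{n-k})$. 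Since $\mathcal G\notin\mathcal V$, some multilinear polynomial vanishes on $\mathcal V$ but not on $\mathcal G$. From this one extracts a hook $h(k,l)$ such that every $\lambda$ in the cocharacter of $\mathcal V$ avoids it, i.e.\ lies in $H(k,l)$.

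Next I would use Regev's exponential bound (the codimension is at most $a^n$) together with the theory of $A$-$B$ (hook) estimates. These allow a refinement to a strip: the cocharacter multiplicities of $\mathcal V$ vanish for all $\lambda$ with $\lambda_{m}>0$, for a suitable $m$. This is exactly Kemer's criterion that $\mathcal V$ satisfies some Capelli identity $c_m$, since the $T$-ideal generated by $c_m$ contains every multilinear polynomial whose cocharacter components have at least $m$ rows. This passage, from ``$\mathcal G$ is excluded'' to ``the cocharacter lies in a strip of bounded height'', is the first serious point. I would attempt it via Kemer's argument with the tensor product $\mathcal G\otimes\mathcal G$ and the $\mathbb Z_2$-grading, i.e.\ the superization trick.

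Once $c_m\in\operatorname{Id}(\mathcal V)$, the second step is to show that $\mathcal V$ is generated by a finitely generated algebra. Because of the Capelli identity, every multilinear polynomial modulo $\operatorname{Id}(\mathcal V)$ is determined by its evaluations in which at most $m-1$ distinct variables are alternated. Hence the relatively free algebra on $m-1$ (or a bounded number of) generators already generates $\mathcal V$. Call it $F_{k}(\mathcal V)$; it is affine and PI.

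The final and hardest step is Kemer's representability theorem for affine PI-algebras: every affine PI-algebra over a field of characteristic zero is PI-equivalent to a finite-dimensional algebra. Here I would follow Kemer's induction on the Kemer index $(\beta,\gamma)$ of the $T$-ideal. The key tools are the following:
\begin{itemize}
\item[(i)] Construct a finite-dimensional algebra $A_0$ with $\operatorname{Id}(A_0)\subseteq\operatorname{Id}(\mathcal V)$, via Lewin's embedding and the Braun--Kemer--Razmyslov nilpotency of the radical.
\item[(ii)] Replace $A_0$ by a direct sum of basic (fundamental) algebras with the same Kemer index.
\item[(iii)] Use Kemer polynomials and the phoenix property to show that adding the Kemer polynomials of $\mathcal V$ to its $T$-ideal strictly lowers the Kemer index.
\item[(iv)] By induction, represent the smaller $T$-ideal by a finite-dimensional algebra $A_1$, and realize the Kemer polynomials in a finite-dimensional ``representable'' quotient $A_2$ obtained from the relatively free algebra by adjoining the trace (Shirshov and Cayley--Hamilton) structure.
\end{itemize}
Then $\mathcal V=\operatorname{var}(A_1\oplus A_2)$. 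I expect (iii)--(iv) to be the main obstacle, specifically proving that the algebra with adjoined characteristic values is finite over a Noetherian commutative ring while retaining the Kemer polynomials. I would rely on Shirshov's height theorem and the trace identities of the basic algebras to handle it.
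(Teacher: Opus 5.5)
The paper gives no proof of this statement; it quotes it as a consequence of Kemer's theory. Your architecture ($\mathcal G\notin\mathcal V\Rightarrow$ a Capelli identity $\Rightarrow$ $\mathcal V$ is generated by an affine relatively free algebra $\Rightarrow$ Kemer's representability of affine PI-algebras) is the standard deduction, and Steps~2 and~3 are acceptable as an outline. The gap is Step~1. It is the only place where the hypothesis $\mathcal G\notin\mathcal V$ can enter, and your argument never actually uses it. The hook bound $\lambda\in H(k,l)$ follows from \emph{any} nontrivial identity (Amitsur--Regev), so the fact that your polynomial does not vanish on $\mathcal G$ plays no role there. Moreover, Regev's bound and the hook estimates hold verbatim for $\operatorname{var}(\mathcal G)$ itself, whose cocharacter $\sum_k\chi_{(k,1^{n-k})}$ lies in $H(1,1)$ but in no strip. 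So these tools cannot refine a hook to a strip.

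The superization you fall back on does the job, but only through the $\mathbb Z_2$-graded representability theorem: $\mathcal V=\operatorname{var}(\mathcal G(B))$, where $B=\overline B+J(B)$ is a finite-dimensional superalgebra and $\mathcal G(B)=\mathcal G_0\otimes B_0+\mathcal G_1\otimes B_1$. That is the graded analogue of your Step~3, not something available before it. Given it, the argument runs as follows. Suppose $\overline B_1\neq\{0\}$. Then (over $\overline F$) some graded simple component of $\overline B$ is $M_{k,l}$ with $k,l\geq1$, or $M_n(F+Fc)$ with $c$ odd and $c^2=1$. In either case $\mathcal G(\overline B)\subseteq\mathcal G(B)$ contains a copy of $\mathcal G$: for instance $\{a\otimes(e_{11}+e_{22})+b\otimes(e_{12}+e_{21}) : a\in\mathcal G_0,\ b\in\mathcal G_1\}$ in a corner $M_{1,1}$, or $\mathcal G_0\otimes1+\mathcal G_1\otimes c$. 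Hence $\mathcal G\notin\mathcal V$ forces $B_1\subseteq J(B)$. Let $J(B)^q=\{0\}$ and $t=\dim\overline B$; a pigeonhole argument then gives the Capelli identity $c_{t+q}$ on $\mathcal G(B)$. In fact, every nonzero monomial evaluation uses fewer than $q$ odd entries, so one gets directly $\mathcal V=\operatorname{var}(E(B))$ with $E$ the Grassmann algebra on $q$ generators. This is a finite-dimensional algebra, which makes Steps~2--3 unnecessary. So Step~1 is a consequence of the deepest part of Kemer's theory rather than a preliminary to it. As written, your plan must either be reordered this way or supply a genuine argument deriving a strip from $\mathcal G\notin\mathcal V$, which you do not give.
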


As a consequence, we obtain the following.

\begin{corollary}
\label{cor:finite-dimensional-reduction}
Every variety of polynomial codimension growth over a field of
characteristic zero is generated by a finite-dimensional algebra.
\end{corollary}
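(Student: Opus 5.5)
The corollary follows by combining Theorem~\ref{thm:kemer-polynomial-growth} with Theorem~\ref{thm:representability}. The only thing to verify is that a variety of polynomial growth cannot contain the Grassmann algebra $\mathcal G$.

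Let $\mathcal V$ be a variety of polynomial codimension growth. First we check that $\mathcal V$ is generated by a PI-algebra. Every variety is generated by its relatively free algebra of countable rank, $F\langle X\rangle/T(\mathcal V)$, where $T(\mathcal V)$ is the $T$-ideal of identities of $\mathcal V$. Call this algebra $A$, so that $\mathcal V=\operatorname{var}(A)$ and $c_n(\mathcal V)=c_n(A)$. Since $c_n(A)\le an^b<n!$ for $n$ large, some multilinear polynomial of degree $n$ lies in $\operatorname{Id}(A)$. Hence $A$ is a PI-algebra.

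Next we apply Theorem~\ref{thm:kemer-polynomial-growth} to $A$. Since its codimensions are polynomially bounded, $\mathcal G\notin\operatorname{var}(A)=\mathcal V$.

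Finally, Theorem~\ref{thm:representability} gives a dichotomy: either $\mathcal G\in\mathcal V$, or $\mathcal V=\operatorname{var}(B)$ for some finite-dimensional algebra $B$. The first alternative has just been excluded, so $\mathcal V$ is generated by a finite-dimensional algebra.

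One could also avoid Kemer's theorem and show directly that $c_n(\mathcal G)=2^{n-1}$, which is not polynomially bounded. Codimensions are monotone under inclusion of varieties, so $\mathcal G\in\mathcal V$ would give $c_n(\mathcal V)\ge 2^{n-1}$, again contradicting polynomial growth. In either form the argument is routine; the only point needing care is that $\mathcal V$ is generated by a single PI-algebra, so that Theorem~\ref{thm:kemer-polynomial-growth} applies.
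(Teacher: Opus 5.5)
Your proof is correct and follows the paper's route: the paper presents this corollary directly as a consequence of Theorem~\ref{thm:kemer-polynomial-growth} (polynomial growth excludes $\mathcal G$) combined with Theorem~\ref{thm:representability}, with no further argument. Your extra check that the relatively free algebra is PI, and your alternative via $c_n(\mathcal G)=2^{n-1}$, are valid details that the paper leaves implicit.
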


The action of the symmetric group $S_n$ on $P_n$ induces an $S_n$-module
structure on $P_n(A)$.  Its character, denoted by $\chi_n(A)$, is called the $n$th
cocharacter of $A$.  By complete reducibility,
\begin{equation}
\label{eq:cocharacter-decomposition}
       \chi_n(A)=\sum_{\lambda\vdash n}m_\lambda(A)\chi_\lambda,
\end{equation}
where $\chi_\lambda$ is the irreducible $S_n$-character corresponding to a partition $\lambda$ of $n$ and $m_\lambda(A)$ is its multiplicity.  Consequently,
\begin{equation*}
       c_n(A)=\sum_{\lambda\vdash n}m_\lambda(A)d_\lambda,
       \qquad d_\lambda=\chi_\lambda(1).
\end{equation*}
The $n$th colength of $A$ is
\[
       \ell_n(A)=\sum_{\lambda\vdash n}m_\lambda(A).
\]
Thus the colength counts the irreducible constituents of the cocharacter.

For every finite-dimensional algebra \(A\), we denote by \(J=J(A)\) its Jacobson radical and set
\[
s_A:=\min\{s\geq 0\mid J(A)^{s+1}=\{0\}\}.
\]
Thus \(J(A)^{s_A+1}=\{0\}\), and, whenever \(J(A)\neq \{0\}\) we have $J(A)^{s_A}\neq \{0\}.$

The following theorem gives a representation-theoretic characterization of polynomial codimension growth. In particular, it provides the uniform bound on cocharacter multiplicities used below.

\begin{theorem}[\cite{GZ2000,MRZ1999}]
\label{thm:bounded-colength}
Let $A$ be a PI-algebra over a field of characteristic zero. The following
conditions are equivalent:
\begin{enumerate}
    \item $A$ has polynomial codimension growth;
    
    \item there exist a finite-dimensional algebra \(B\) and an integer \(q\geq 1\) such that
$\operatorname{var}(A)=\operatorname{var}(B)$, $J(B)^q=\{0\}$ and
\[
\chi_n(A)=\chi_n(B)
=\sum_{\substack{\lambda\vdash n\\ n-\lambda_1<q}}
m_\lambda(B)\chi_\lambda,
\quad n\geq 1.
\]
    
    \item There exists a
    constant $M>0$ such that   $\ell_n(A)\leq M$ for every $n\geq1.$
\end{enumerate}
In particular, whenever these conditions hold there exists $M>0$ such that $ m_\lambda(A)\leq M$
for every $n\geq1$ and every partition $\lambda\vdash n$.
\end{theorem}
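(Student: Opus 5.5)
The plan is to prove the cycle $(1)\Rightarrow(2)\Rightarrow(3)\Rightarrow(1)$. The final assertion then follows at once from (3), since $m_\lambda(A)\le \ell_n(A)\le M$. Throughout we may extend scalars to an algebraically closed field. This changes neither codimensions nor cocharacters, because $F$ has characteristic zero.

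\emph{$(1)\Rightarrow(2)$.}
\begin{itemize}
\item By Corollary~\ref{cor:finite-dimensional-reduction} there is a finite-dimensional $B$ with $\operatorname{var}(A)=\operatorname{var}(B)$. By Theorem~\ref{thm:kemer-polynomial-growth}, $\mathcal G,UT_2\notin\operatorname{var}(B)$.
\item Write the Wedderburn--Malcev decomposition $B=B_{ss}+J$. Since $UT_2\notin\operatorname{var}(B)$, no simple component of $B_{ss}$ can be $M_k(F)$ with $k\ge2$, as $UT_2$ embeds in it. Hence $B_{ss}=Fe_1\oplus\cdots\oplus Fe_k$ with orthogonal idempotents.
\item Again since $UT_2\notin\operatorname{var}(B)$, we get $e_iJe_j=0$ for $i\neq j$: a nonzero element $j\in e_iJe_j$ would make $\mathrm{span}\{e_i,e_j,j\}$ a copy of $UT_2$.
\item From this I expect to show that $B$ is PI-equivalent to a direct sum of algebras $B_i=Fe_i+J_i$, together with a nilpotent algebra. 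Here $J_i$ is the part of the radical reachable from $e_i$. Each such summand still has radical nilpotent of some index $q$.
\end{itemize}

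Now let $\lambda\vdash n$ with $n-\lambda_1\ge q$, and let $f_\lambda$ be a highest weight vector built from a Young tableau. It is a sum of products of polynomials alternating in the columns. There are at least $n-\lambda_1$ pairs of variables that are alternated inside columns of height at least $2$.
\begin{itemize}
\item On $Fe+J$, substituting $e$ into two variables of the same alternating set gives zero.
\item So a nonzero multilinearized evaluation needs at least $n-\lambda_1\ge q$ radical elements.
\item Since $J^q=0$, every such evaluation vanishes.
\end{itemize}
Therefore $m_\lambda(B)=0$ for such $\lambda$, which gives (2).

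\emph{$(2)\Rightarrow(3)$.} The number of partitions $\lambda\vdash n$ with $n-\lambda_1<q$ is at most $\sum_{k<q}p(k)$, independent of $n$. It therefore suffices to bound each such $m_\lambda(B)$ uniformly in $n$.
\begin{itemize}
\item Take $m_\lambda(B)$ to be the dimension of the space of highest weight vectors of weight $\lambda$ in the relatively free algebra of $B$ of rank $q$.
\item Such a vector is a combination of words in $x_1$ and fewer than $q$ other letters. Each word has the form $x_1^{a_0}y_1x_1^{a_1}\cdots y_rx_1^{a_r}$ with $r<q$.
\item Modulo $\operatorname{Id}(B)$, I would use the finite dimensionality of $B$, evaluating $x_1$ at a generic element $\sum \xi_i b_i$. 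The powers of a generic element span a space of dimension at most $\dim B$.
\item This should show that the span of these words has dimension bounded by a function of $\dim B$ and $q$ alone. That gives the constant $M$.
\end{itemize}

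\emph{$(3)\Rightarrow(1)$.} Colength is monotone under taking subvarieties, since $\operatorname{Id}$ only grows. It is known that $\ell_n(\mathcal G)=n$ and that $\ell_n(UT_2)$ is unbounded, the multiplicities for $UT_2$ growing linearly. Hence bounded colength forces $\mathcal G,UT_2\notin\operatorname{var}(A)$, and Theorem~\ref{thm:kemer-polynomial-growth} yields polynomial growth.

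The main obstacle is the structural step in $(1)\Rightarrow(2)$: reducing a general finite-dimensional $B$ with $\mathcal G,UT_2\notin\operatorname{var}(B)$ to the form $\bigoplus(Fe_i+J_i)\oplus N$ while preserving the identities. The products $J e_i J$ and $J\,J$ mixing different idempotent blocks must be handled carefully. I would first try to show directly that on $B$ itself, any multilinear evaluation with two semisimple elements in one alternating set vanishes, using only $e_iJe_j=0$ and orthogonality. That would avoid an explicit decomposition.
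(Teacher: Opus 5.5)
\textbf{The gap is in $(2)\Rightarrow(3)$, not in the structural step you flag.} You claim that, modulo $\Id(B)$, the words $x_1^{a_0}y_1x_1^{a_1}\cdots y_rx_1^{a_r}$ with $r<q$ and fixed multidegree span a space whose dimension is bounded in terms of $\dim B$ and $q$, using only that $B$ is finite-dimensional. This is false.

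Take $B=UT_2$. Then $\dim B=3$ and $J(B)^2=\{0\}$, so $q=2$ and $(n-1,1)$ lies in the strip. Substitute $x_1=\alpha e_{11}+\beta e_{22}+\gamma e_{12}$ and $x_2=e_{12}$. This sends $x_1^ax_2x_1^b$ to $\alpha^a\beta^b e_{12}$. Hence the $n$ words with $a+b=n-1$ are linearly independent modulo $\Id(UT_2)$, in accordance with $m_{(n-1,1)}(UT_2)=n-1$.

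The generic-element heuristic fails for a specific reason. The linear relations among the powers of $X_1=\sum_i\xi_ib_i$ hold over $F(\xi)$, with coefficients that are nonconstant polynomials in $\xi$. They yield no $F$-linear relation among polynomials of a fixed multidegree. In your argument, the hypothesis that $\chi_n(B)$ vanishes off the strip is used only to count the strip partitions. It has to be used to control the structure of $B$.

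\textbf{A repair.} We have $m_{(p,p)}(UT_2)=1$ for every $p$, and these partitions leave the strip once $p\geq q$. So hypothesis (2) forces $UT_2\notin\operatorname{var}(B)$. After extending scalars, your own argument then gives $B=\bigoplus_{i=1}^kFe_i+J$ with $e_iBe_j=\{0\}$ for $i\neq j$.

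Now evaluate a word on generic elements $X=\sum_i\alpha_ie_i+U$ (for $x_1$) and $Z_s$ (for the $y$'s).
\begin{enumerate}
\item Every nonzero term of the expansion involves a single idempotent $e_i$ and fewer than $q$ radical factors.
\item Since $e_i^c=e_i$, the value of such a term depends only on a pattern $P$. The pattern records the colour $i$, the sequence of $y$'s, which part of each $Z_s$ is used, the positions of the radical factors, and which gaps between them are nonempty.
\item Its $\alpha$-exponent is $\lambda_1$ minus the number of $U$'s, independent of the individual $a_s$.
\end{enumerate}
Thus the generic value of a word $w$ is $\sum_P c_P(w)\,\mathrm{Val}(P)$. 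Here $c_P(w)$ is a product of numbers of compositions, and the number of patterns depends only on $k$ and $q$. The kernel of $w\mapsto(c_P(w))_P$ lies in $\Id(B)$. This bounds the dimension of the multihomogeneous component of degree $\lambda$, which contains the highest weight vectors of weight $\lambda$. Hence $m_\lambda(B)$ is bounded uniformly in $n$.

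\textbf{The rest of your proposal is sound.} $(3)\Rightarrow(1)$ is correct as written. In $(1)\Rightarrow(2)$, the direct route you mention at the end works immediately. Since $e_iBe_j=e_iJe_j+e_iB_{ss}e_j=\{0\}$ for $i\neq j$, every monomial containing two distinct idempotents vanishes. So a nonzero basis evaluation uses a single $e_i$, at most once per alternating set, and hence has at least $n-\lambda_1$ radical entries. No decomposition into algebras $Fe_i+J_i$ is needed.

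The paper itself gives no proof of this theorem; it is quoted from the literature.
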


If $A$ is finite-dimensional, then in item~(2) we may take $B=A$, see \cite[Theorem 3]{GZ2000}. Since, by definition,
$J(A)^{s_A+1}=\{0\},$ we may choose $q=s_A+1$. Consequently, for every $n\geq1$ and
every partition $\lambda\vdash n$,
$m_\lambda(A)\neq0$ implies  $n-\lambda_1\leq s_A.$ This observation will be used repeatedly below.

Let $K$ be a field extension of $F$ and set
\[
A_K:=K\otimes_F A.
\]
Since $\operatorname{char}F=0$, every $T$-ideal is determined by
its multilinear elements. Under the natural identification
$K\langle X\rangle\cong K\otimes_F F\langle X\rangle,$ one has
\[
\operatorname{Id}_K(A_K)
=
K\otimes_F\operatorname{Id}_F(A).
\]

Consequently, for every $n\geq1$ there is an isomorphism of
$KS_n$-modules $P_n(A_K)\cong K\otimes_F P_n(A).$ It follows that
$c_n(A_K)=c_n(A)$ and $m_\lambda(A_K)=m_\lambda(A)$ for every $\lambda\vdash n$. Thus the codimension and
cocharacter sequences are unchanged by scalar extension.

Moreover, if $A$ is finite-dimensional, then, since every field
of characteristic zero is perfect,
$J(A_K)=K\otimes_FJ(A).$ Hence the nilpotency index of the radical is preserved, and so
\[
s_{A_K}=s_A
\qquad\text{and}\qquad
\dim_K\frac{A_K}{J(A_K)}
=
\dim_F\frac{A}{J(A)}.
\]

Taking $K=\overline F$, we may therefore assume from now on, without loss of generality, that the ground field is
algebraically closed.

Corollary~\ref{cor:finite-dimensional-reduction} reduces the study of varieties of polynomial codimension growth to finite-dimensional algebras. The next step is to single out those finite-dimensional algebras that cannot, at the level of polynomial identities, be replaced by algebras of smaller structural complexity.

\subsection{Fundamental algebras}

Let $A$ be a finite-dimensional algebra.  We fix a Wedderburn--Malcev
decomposition
\begin{equation}
\label{eq:wedderburn-malcev}
       A=\overline A+J(A),
       \qquad
       \overline A=A_1\oplus\cdots\oplus A_q,
\end{equation}
where $J(A)$ is the Jacobson radical and the $A_i$ are simple algebras. We set \(t_A:=\dim_F\overline A\). Adopting the convention \(J(A)^0=A\), we recall that the integer \(s_A\) is characterized by
\[
J(A)^{s_A}\neq \{0\}
\qquad\text{and}\qquad
J(A)^{s_A+1}=\{0\}.
\]

We now recall the definition of a fundamental algebra in the form used in
\cite{GQV2026}.  Suppose first that $A$ is not simple and put
$r=\dim_FJ(A)$.  Form the free product
\[
       A'=\overline A*F\langle y_1,\ldots,y_r\rangle.
\]
Let $I$ be the ideal of $A'$ generated by all evaluations in $A'$ of the
polynomials belonging to $\operatorname{Id}(A)$, and set $\mathcal A=A'/I.$ If $K$ is the ideal of $\mathcal A$ generated by the images of
$y_1,\ldots,y_r$, define
\begin{equation*}
       \mathcal A_{s_A}=\mathcal A/K^{s_A}.
\end{equation*} We adopt the convention \(K^0=\mathcal A\), so that \(\mathcal A_0=\{0\}\). The algebra \(\mathcal A_{s_A}\) is finite-dimensional, satisfies all polynomial identities of \(A\)
and $J(\mathcal A_{s_A})^{s_A}=0.$ In particular, when \(s_A>0\), its radical has strictly smaller
nilpotency index than \(J(A)\).

For each $i\in\{1,\ldots,q\}$, let
\begin{equation*}
       B_i=\left(\bigoplus_{j\neq i}A_j\right)
             +J(A).
\end{equation*}
The algebras $B_i$ retain the radical of $A$ but omit one simple component
of its semisimple part.

\begin{definition}
A finite-dimensional algebra $A$ is \emph{fundamental} if either $A$ is
simple or
\begin{equation}
\label{eq:fundamental-definition}
       \operatorname{Id}(A)\subsetneq \left(\bigcap_{i=1}^{q}\operatorname{Id}(B_i)\right)
       \cap\operatorname{Id}(\mathcal A_{s_A}).
\end{equation}
Fundamental algebras are also called \emph{basic algebras} in the literature.
\end{definition}

When $q=0$, $A$ is nilpotent and satisfies condition~\eqref{eq:fundamental-definition}. This condition means that the identities of $A$ cannot be recovered solely from the algebras obtained by deleting a simple component of $\overline A$ or by lowering the nilpotency index of the radical. This minimality is precisely what makes fundamental algebras the building blocks of varieties generated by finite-dimensional PI-algebras.

\begin{theorem}[{\cite[Proposition~17.2.34]{AGP2020}}] \label{thm:fundamental-decomposition}
For every finite-dimensional algebra $A$, there exist fundamental algebras
$B_1,\ldots,B_m$ such that $A\sim_T B_1\oplus\cdots\oplus B_m.$
\end{theorem}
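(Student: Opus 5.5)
Since the statement is the reduction theorem cited from \cite[Proposition~17.2.34]{AGP2020}, I will argue by induction on a complexity invariant of the finite-dimensional algebra $A$. I use the pair $(t_A,s_A)$, ordered lexicographically, where $t_A=\dim_F\overline A$ and $s_A$ is the nilpotency parameter of $J(A)$.

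\emph{Base cases.} If $A$ is simple, it is fundamental by definition. If $A$ is nilpotent ($q=0$), the remark after the definition says it is fundamental. In either case we take $m=1$, $B_1=A$.

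\emph{Inductive step.} Suppose $A$ is neither simple nor nilpotent, and that $A$ is not fundamental (otherwise there is nothing to prove). Since $\operatorname{Id}(A)\subseteq\operatorname{Id}(B_i)$ because each $B_i$ is a subalgebra of $A$, and $\operatorname{Id}(A)\subseteq\operatorname{Id}(\mathcal A_{s_A})$ by construction, the inclusion \eqref{eq:fundamental-definition} is always non-strict. Failure of fundamentality therefore means
\[
\operatorname{Id}(A)=\Bigl(\bigcap_{i=1}^q\operatorname{Id}(B_i)\Bigr)\cap\operatorname{Id}(\mathcal A_{s_A}),
\]
that is,
\[
A\sim_T B_1\oplus\cdots\oplus B_q\oplus\mathcal A_{s_A}.
\]
Here I use that the identities of a finite direct sum are the intersection of the identities of the summands.

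Each summand has strictly smaller invariant:
\begin{itemize}
\item Each $B_i$ has semisimple part $\bigoplus_{j\ne i}A_j$, so $t_{B_i}<t_A$. This requires checking that $J(B_i)=J(A)$, which holds since $J(A)$ is a nilpotent ideal of $B_i$ with semisimple quotient.
\item $\mathcal A_{s_A}$ is finite-dimensional with $J(\mathcal A_{s_A})^{s_A}=0$, so $s_{\mathcal A_{s_A}}<s_A$.
\end{itemize}

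The delicate point is the second bullet: we also need $t_{\mathcal A_{s_A}}\le t_A$, so that the lexicographic pair genuinely decreases. I expect this to hold because $\mathcal A_{s_A}$ is generated by a homomorphic image of $\overline A$ together with nilpotent elements. Its semisimple part should therefore be a quotient of $\overline A$, hence of dimension at most $t_A$. If this comparison turns out awkward, I would instead order by $(t_A,s_A)$ with $t_A$ compared first and argue separately that the semisimple part of $\mathcal A_{s_A}$ embeds as a quotient of $\overline A$.

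\emph{Conclusion.} Applying the inductive hypothesis to each summand writes each one as PI-equivalent to a finite direct sum of fundamental algebras. Concatenating these sums gives the desired decomposition of $A$, since PI-equivalence is compatible with direct sums. The only genuinely nontrivial inputs are the finite-dimensionality of $\mathcal A_{s_A}$ and the bound on its radical, both stated in the text.
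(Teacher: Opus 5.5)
The paper does not prove this statement itself; it cites \cite[Proposition~17.2.34]{AGP2020}. Your argument is correct and is the standard induction behind that result: with this definition, non-fundamentality means exactly $A\sim_T B_1\oplus\cdots\oplus B_q\oplus\mathcal A_{s_A}$, and one inducts on $(t_A,s_A)$ in lexicographic order. The point you flag closes cleanly, but for a sharper reason than ``generated by nilpotent elements'' (which alone would not suffice): $\overline A\subseteq A$ gives $\operatorname{Id}(A)\subseteq\operatorname{Id}(\overline A)$, so $I$ lies in the ideal generated by the $y_i$, hence $\mathcal A/K\cong\overline A$ is semisimple while $K/K^{s_A}$ is nilpotent, giving $J(\mathcal A_{s_A})=K/K^{s_A}$, $t_{\mathcal A_{s_A}}=t_A$ and $s_{\mathcal A_{s_A}}<s_A$ (for $s_A=0$ the summand $\mathcal A_0=\{0\}$ is trivial and can be dropped).
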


The preceding theorem reduces PI-questions about finite-dimensional algebras
to fundamental ones.  The definition, however, is expressed through auxiliary
algebras.  The next result characterizes fundamental algebras in terms of their cocharacters. Recall that $t_A=\dim_F\overline A$ is the dimension of the semisimple part of $A$.

\begin{theorem}[{\cite[Theorem~6]{GPMZ2020}}]
\label{thm:fundamental-character}
Let $A$ be a finite-dimensional algebra over an algebraically closed field of characteristic zero with $t_A\geq1$.  Then the following
conditions are equivalent:
\begin{enumerate}
    \item $A$ is fundamental;
    \item for every sufficiently large $n$, there exists a partition
    $\lambda=(\lambda_1,\ldots,\lambda_u)\vdash n$ such that
    \[
        \lambda_{t_A+1}+\cdots+\lambda_u=s_A
        \qquad\text{and}\qquad
        m_\lambda(A)>0.
    \]
\end{enumerate}
\end{theorem}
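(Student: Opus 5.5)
The plan is to prove the two implications separately. Both rest on the standard correspondence between irreducible constituents \(\chi_\lambda\) of \(\chi_n(A)\) and highest weight vectors \(f_\lambda\), which are polynomials alternating on the columns of a Young tableau of shape \(\lambda\). Throughout, \(F\) is algebraically closed, so \(\overline A=A_1\oplus\cdots\oplus A_q\) with \(A_k\cong M_{n_k}(F)\) and \(t_A=\sum_k n_k^2\). We evaluate only on elements of a basis of \(A\) formed by a basis of \(\overline A\) and a basis of \(J(A)\). The basic counting fact is this: if a column of height \(h\) has all its entries substituted by elements of \(\overline A\), alternation forces \(h\le t_A\). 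Hence every nonzero evaluation of \(f_\lambda\) uses at least \(\lambda_{t_A+1}+\lambda_{t_A+2}+\cdots\) radical elements.

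\emph{(2)\(\Rightarrow\)(1).} Fix a large \(n\) and a partition \(\lambda\) with tail \(\lambda_{t_A+1}+\cdots+\lambda_u=s_A\) and \(m_\lambda(A)>0\). Choose a highest weight vector \(f_\lambda\notin\operatorname{Id}(A)\), and show that \(f_\lambda\in\operatorname{Id}(B_i)\) for every \(i\) and \(f_\lambda\in\operatorname{Id}(\mathcal A_{s_A})\); this gives the strict inclusion in the definition of a fundamental algebra.
\begin{itemize}
\item For \(B_i\): its semisimple part has dimension \(t_A-n_i^2<t_A\). By the counting fact, any nonzero evaluation would need strictly more than \(s_A\) radical elements, since each of the first \(t_A\) rows is long when \(n\) is large. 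It therefore vanishes because \(J(B_i)^{s_A+1}=J(A)^{s_A+1}=0\).
\item For \(\mathcal A_{s_A}\): its semisimple part is still \(\overline A\), while \(J(\mathcal A_{s_A})^{s_A}=0\). A nonzero evaluation would need \(s_A\) radical elements, and their product vanishes.
\end{itemize}
To make "nonzero evaluation needs that many radical elements in a product" rigorous, I would pass to multilinear consequences and use that the semisimple elements sit between radical ones.

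\emph{(1)\(\Rightarrow\)(2).} Take a multilinear \(f\in\bigl(\bigcap_i\operatorname{Id}(B_i)\bigr)\cap\operatorname{Id}(\mathcal A_{s_A})\) with \(f\notin\operatorname{Id}(A)\). Any nonzero evaluation of \(f\) on \(A\) must hit every simple component \(A_k\), otherwise it is an evaluation in some \(B_i\). It must also contain at least \(s_A\) radical elements, otherwise it factors through \(\mathcal A_{s_A}\) via the universal property of \(\mathcal A\). Since \(J^{s_A+1}=0\), it contains exactly \(s_A\) radical elements.
\begin{itemize}
\item \emph{Kemer-type enlargement.} Using units of the \(A_k\) and products of Capelli or Regev central polynomials for \(M_{n_k}(F)\), I insert into the semisimple slots of this evaluation polynomials with arbitrarily many alternating sets of size \(t_A\). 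This produces, for every large \(n\), a multilinear \(g_n\in P_n\), still in the intersection, with \(g_n\notin\operatorname{Id}(A)\), having \(\mu\) disjoint alternating \(t_A\)-sets and \(\mu\to\infty\).
\item \emph{Extracting \(\lambda\).} Decompose \(FS_ng_n\) modulo \(\operatorname{Id}(A)\). Some \(\lambda\) with \(m_\lambda(A)>0\) has a highest weight vector in \(FS_ng_n\), and hence in the intersection, that is nonzero on \(A\). The remark after Theorem~\ref{thm:bounded-colength}, applied row by row to the semisimple part as well, bounds the tail \(\lambda_{t_A+1}+\cdots\le s_A\).
\end{itemize}

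The main obstacle is the reverse inequality, tail \(\ge s_A\). I would first try the following. If the tail were smaller, a nonzero evaluation of the corresponding highest weight vector could be chosen with fewer than \(s_A\) radical entries, by filling the first \(t_A\) rows with a semisimple basis. This would make the vector nonzero on \(\mathcal A_{s_A}\), a contradiction. Justifying that such a "small" evaluation exists is exactly where the Kemer-index machinery, with its large alternating \(t_A\)-sets, is needed.
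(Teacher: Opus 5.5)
The paper gives no proof of this statement; it quotes it from Giambruno--Polcino Milies--Zaicev, where it rests on Kemer's two lemmas for fundamental algebras. Measured against that, your (1)$\Rightarrow$(2) has a genuine gap exactly at the step you flag, and the route you suggest for closing it cannot work.

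\textbf{The suggested fix is false.} Suppose a constituent $\lambda$ of a polynomial in $\bigl(\bigcap_i\operatorname{Id}(B_i)\bigr)\cap\operatorname{Id}(\mathcal A_{s_A})$ has tail $\lambda_{t_A+1}+\cdots+\lambda_u<s_A$. It need not admit a nonzero evaluation with only tail-many radical entries. Take the fundamental algebra $A=UT_2$. Here $t_A=2$, $s_A=1$, $B_1=Fe_{22}+Fe_{12}$, $B_2=Fe_{11}+Fe_{12}$ and $\mathcal A_1\cong F\oplus F$. For $a,b\geq1$, consider the highest weight vector $x_1^a[x_1,x_2]x_1^b$ of shape $(n-1,1)$.
\begin{itemize}
\item It vanishes on $B_1$ and $B_2$: the commutator lies in $Fe_{12}$, which is annihilated by left multiplication in $B_1$ and by right multiplication in $B_2$.
\item It vanishes on the commutative algebra $\mathcal A_1$.
\item At $x_1=\alpha e_{11}+\beta e_{22}$, $x_2=e_{12}$ it equals $\alpha^a\beta^b(\alpha-\beta)e_{12}\neq0$.
\end{itemize}
Its tail is $0<s_A$, yet every nonzero evaluation uses a radical element. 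So $FS_ng_n$ may contain constituents with too small a tail, and no contradiction with $\operatorname{Id}(\mathcal A_{s_A})$ arises. Alternating sets of size $t_A$ only force $\lambda_{t_A}+\lambda_{t_A+1}+\cdots$ to be large, not the tail.

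\textbf{The missing ingredient is Kemer's second lemma.} For fundamental $A$ and every $\mu$, there is a nonidentity in the intersection with $\mu$ alternating sets of size $t_A$ (your enlargement step, i.e.\ Kemer's first lemma, which you only sketch). It also has $s_A$ further disjoint alternating sets of size $t_A+1$, each formed by one radical variable together with a full semisimple $t_A$-set. Showing that the extra terms of this alternation do not destroy the nonzero value is the real content. It uses both fundamentality properties: every nonzero evaluation hits all components and has exactly $s_A$ radical entries.

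Given such $g_n$, the tail bound is combinatorial, not evaluative.
\begin{itemize}
\item If $\rho g_n$ is alternating in $x_i,x_j$ with $i,j$ in the same row of a tableau $T$, then $\bigl(\sum_{\sigma\in R_T}\sigma\bigr)\rho g_n=0$, since $(i\,j)\in R_T$.
\item Set $e'_T=\bigl(\sum_{\tau\in C_T}\operatorname{sgn}(\tau)\tau\bigr)\bigl(\sum_{\sigma\in R_T}\sigma\bigr)$. Since $1\in FS_n$ is a linear combination of elements $\pi e'_T\rho$ with $\pi,\rho\in S_n$, some $e'_T\rho g_n\notin\operatorname{Id}(A)$, so $m_\lambda(A)>0$ for the shape $\lambda$ of $T$.
\item Each $(t_A+1)$-set then occupies $t_A+1$ distinct rows and contributes a box below row $t_A$. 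Hence the tail is at least $s_A$.
\end{itemize}
Your counting fact gives the reverse inequality, so the tail equals $s_A$. Note that the counting fact is what you need here. The remark after the bounded-colength theorem gives $n-\lambda_1\leq s_A$ only under polynomial growth.

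\textbf{On (2)$\Rightarrow$(1).} Your argument is correct when $J(A)\neq0$, but ``each of the first $t_A$ rows is long'' does not follow from (2). What you need, and have, is $\lambda_{t_A}\geq\lambda_{t_A+1}\geq1$. That already gives at least $s_A+1$ radical entries in any nonzero evaluation on $B_i$.

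When $J(A)=0$ the implication is actually false as stated. $F\oplus F$ satisfies (2) with $\lambda=(n)$, while $\operatorname{Id}(B_1)\cap\operatorname{Id}(B_2)\cap\operatorname{Id}(\mathcal A_0)=\operatorname{Id}(F)=\operatorname{Id}(F\oplus F)$, so it is not fundamental. The statement must be read for $A$ simple or non-semisimple. This is harmless for the paper, which only applies it with $t_A=1$.
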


In other words, in every sufficiently large degree the cocharacter of a
fundamental algebra contains a constituent whose Young diagram has exactly
$s_A$ boxes below its first $t_A$ rows.  This formulation is especially
effective in the polynomial-growth setting.

Recall that the algebra in~\eqref{eq:wedderburn-malcev} is called
\emph{reduced} if it is nilpotent or, after a suitable reordering of the
simple components,
\[
       A_1J(A)A_2J(A)\cdots J(A)A_q\neq \{0\}.
\]
Every fundamental algebra is reduced, see \cite[Corollary 5.13]{AKK2016}.  A reduced algebra has polynomial
codimension growth precisely when $\dim_F\overline A\leq1$.  Hence a
nonnilpotent fundamental algebra of polynomial growth has a one-dimensional
semisimple part.

\begin{proposition}[{\cite[Proposition~3.4]{GQV2026}}]
\label{prop:fundamental-polynomial-growth} Let $A$ be a finite-dimensional algebra over an algebraically closed field of characteristic zero with polynomial codimension growth.  Then $A$ is
fundamental if and only if one of the following alternatives holds:
\begin{enumerate}
    \item $A$ is nilpotent;
    \item $A\cong F$;
    \item $A\cong F+J(A)$ and, for every sufficiently large
    $n$, there exists a partition
    $\lambda=(\lambda_1,\ldots,\lambda_u)\vdash n$ such that $\lambda_2+\cdots+\lambda_u=s_A$ and $m_\lambda(A)>0.$
\end{enumerate}
\end{proposition}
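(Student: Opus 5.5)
We derive the statement from the cocharacter criterion of Theorem~\ref{thm:fundamental-character}. The first step is to check that, for an algebra of polynomial growth, a fundamental algebra is reduced and therefore satisfies $t_A\le 1$. The cases then split according to whether $A$ is nilpotent or $t_A=1$.

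\emph{Case $t_A=0$.} Then $A=J(A)$ is nilpotent, which is alternative (1). Conversely, the definition already stipulates that a nilpotent algebra ($q=0$) is fundamental. So this case is immediate in both directions.

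\emph{Case $t_A=1$.} Since $F$ is algebraically closed, the only one-dimensional semisimple algebra is $F$ itself, so $\overline A\cong F$ and $A=F+J(A)$.

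\begin{itemize}
\item[(a)] If $J(A)=0$, then $A\cong F$. This is simple, hence fundamental by definition, giving alternative (2). Note that here $s_A=0$, and the condition $\lambda_2+\cdots+\lambda_u=0$ is witnessed by $\lambda=(n)$, because $m_{(n)}(A)=1$: the monomial $x_1\cdots x_n$ does not vanish on $F$. So (2) and (3) are consistent with each other.
\item[(b)] If $J(A)\ne0$, apply Theorem~\ref{thm:fundamental-character} with $t_A=1$. Its condition (2) reads: for every large $n$ there is some $\lambda\vdash n$ with $\lambda_2+\cdots+\lambda_u=s_A$ and $m_\lambda(A)>0$. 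This is exactly alternative (3). The theorem gives the equivalence in both directions, since its hypothesis $t_A\ge1$ holds.
\end{itemize}

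The remaining point, and the only real obstacle, is to justify rigorously that a fundamental algebra of polynomial growth has $t_A\le1$, which is what the converse direction needs.

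\begin{itemize}
\item[(i)] By \cite[Corollary 5.13]{AKK2016}, every fundamental algebra is reduced.
\item[(ii)] Suppose a reduced algebra had $\dim\overline A\ge2$. Then either some $A_i$ is a matrix algebra $M_k(F)$ with $k\ge2$, or $q\ge2$.
\begin{itemize}
\item If some $A_i\cong M_k(F)$ with $k\ge2$, then $UT_2\in\operatorname{var}(A)$.
\item If $q\ge2$, then $A_1JA_2\ne0$, so there are idempotents $e_1,e_2$ and an element $j\in J$ with $e_1je_2\ne0$. The span of $e_1,e_2,e_1je_2$ is then a copy of $UT_2$ inside $A$.
\end{itemize}
\item[(iii)] In either case Theorem~\ref{thm:kemer-polynomial-growth} contradicts polynomial growth.
\end{itemize}

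With this dichotomy in hand, the proposition is a direct specialization of Theorem~\ref{thm:fundamental-character}. The only care needed is in the degenerate cases $A$ nilpotent and $A\cong F$, where the theorem either does not apply ($t_A=0$) or where one checks directly that simple algebras are fundamental by definition.
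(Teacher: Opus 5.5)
Your proposal is correct and follows the same route the paper indicates in the paragraph preceding this proposition. Fundamental implies reduced by \cite[Corollary 5.13]{AKK2016}; a reduced algebra of polynomial growth has $t_A\le 1$; and the remaining case is Theorem~\ref{thm:fundamental-character} with $t_A=1$. Your explicit copy of $UT_2$ spanned by $e_1,e_2,e_1je_2$, combined with Kemer's theorem, supplies the justification for the step the paper only asserts, namely that a reduced algebra with $\dim_F\overline A\ge 2$ cannot have polynomial growth.
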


Denote by $1_F$ the identity element of the fixed copy of $F$ in a Wedderburn–Malcev decomposition of $A$ above. Notice that $1_F$ need not be an identity element of $A$.

We next record the relation between the radical parameter and the precise
degree of polynomial growth.  First we establish the upper estimate, which
does not require fundamentality.

\begin{lemma}
\label{lem:polynomial-upper-bound}
Let $A$ be a finite-dimensional algebra with polynomial codimension growth.
Then there exists a constant $C>0$ such that $c_n(A)\leq Cn^{s_A}$ for every $n\geq1.$
\end{lemma}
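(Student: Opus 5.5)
We plan to combine Theorem~\ref{thm:bounded-colength} with elementary bounds on the degrees $d_\lambda$ of the irreducible characters that can occur. Since $A$ is finite-dimensional with polynomial growth, the remark after Theorem~\ref{thm:bounded-colength} (taking $B=A$ and $q=s_A+1$) gives, for every $n\geq 1$ and $\lambda\vdash n$,
\[
m_\lambda(A)\neq 0\ \Longrightarrow\ n-\lambda_1\leq s_A .
\]
The same theorem provides a constant $M$ with $\ell_n(A)\leq M$ for all $n$. Therefore
\[
c_n(A)=\sum_{\lambda\vdash n}m_\lambda(A)d_\lambda\leq \ell_n(A)\cdot\max\{d_\lambda : \lambda\vdash n,\ n-\lambda_1\leq s_A\}\leq M\max\{d_\lambda : n-\lambda_1\leq s_A\}.
\]

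The remaining step is to bound $d_\lambda$ when $\lambda$ has at most $k:=n-\lambda_1\leq s_A$ boxes outside the first row. We use the crude bound coming from building a standard tableau.
\begin{itemize}
\item First choose which $k$ of the entries $1,\ldots,n$ lie below the first row; there are $\binom{n}{k}$ choices.
\item The first row is then determined, since its entries must increase.
\item The placement of the chosen $k$ entries in the shape $(\lambda_2,\lambda_3,\ldots)$ admits at most $k!$ possibilities.
\end{itemize}
Hence
\[
d_\lambda\leq \binom{n}{k}k!\leq n^k\leq n^{s_A}\qquad (n\geq1).
\]
Alternatively, the hook length formula gives $d_\lambda\leq \binom{n}{k}d_{\mu}$, where $\mu=(\lambda_2,\lambda_3,\ldots)\vdash k$, which yields the same conclusion.

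Combining the two estimates, we get $c_n(A)\leq M n^{s_A}$ for all $n\geq1$, so we may take $C=M$. Two degenerate cases need a brief check.
\begin{itemize}
\item If $s_A=0$, then $J(A)=0$. Only $\lambda=(n)$ can occur, and $c_n(A)\leq M$.
\item If $A$ is nilpotent, then $c_n(A)=0$ for large $n$, and we may enlarge $C$ to cover the finitely many small $n$.
\end{itemize}
There is no real obstacle here. The only point requiring care is the combinatorial bound on $d_\lambda$, which is standard.
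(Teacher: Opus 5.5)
Your proposal is correct and follows essentially the same route as the paper: it restricts to partitions with $n-\lambda_1\le s_A$ via Theorem~\ref{thm:bounded-colength} and bounds $d_\lambda\le n!/(n-k)!\le n^{s_A}$. The differences are cosmetic. You bound the colength $\ell_n(A)\le M$ directly, so $C=M$ suffices, whereas the paper bounds each $m_\lambda$ and counts partitions to get $C=M\sum_{k\le s_A}p(k)$. You obtain $d_\lambda\le\binom{n}{k}k!$ by counting standard tableaux rather than from the hook-length formula, and your separate degenerate-case checks are already covered by the general bound.
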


\begin{proof}
Write $\chi_n(A)$ as in~\eqref{eq:cocharacter-decomposition}.  By Theorem~\ref{thm:bounded-colength}, every partition
$\lambda=(\lambda_1,\ldots,\lambda_u)\vdash n$ such that
$m_\lambda(A)\neq 0$ satisfies
\begin{equation*}
       n-\lambda_1=\lambda_2+\cdots+\lambda_u\leq s_A.
\end{equation*} Moreover,
Theorem~\ref{thm:bounded-colength} gives a constant $M>0$, independent of
$n$, such that $m_\lambda(A)\leq M$.

Set $k=n-\lambda_1$.  Then $0\leq k\leq s_A$ and $\lambda=(n-k,\mu)$, $\mu\vdash k.$ For fixed $k$, there are at most $p(k)$ possible partitions of this form,
where $p(k)$ denotes the number of partitions of $k$.  By the hook-length
formula,
\[
       d_\lambda
       =\frac{n!}{\prod_{(i,j)\in[\lambda]}h_{ij}}
       \leq\frac{n!}{\lambda_1!}
       =\frac{n!}{(n-k)!}
       \leq n^k
       \leq n^{s_A}.
\]
It follows that
$$
       c_n(A)
       \leq M\sum_{k=0}^{s_A}p(k)n^k 
       \leq M\left(\sum_{k=0}^{s_A}p(k)\right)n^{s_A}.$$
The conclusion follows by taking
$C=M\sum_{k=0}^{s_A}p(k)$.
\end{proof}

\begin{theorem}
\label{thm:fundamental-growth-degree}
Let $A$ be a finite-dimensional, nonnilpotent fundamental algebra with
polynomial codimension growth.  Then there exist constants $c,C>0$ and
$n_0\geq1$ such that
\[
       cn^{s_A}\leq c_n(A)\leq Cn^{s_A}
       \qquad\text{for every }n\geq n_0.
\]
Consequently, $s_A$ is exactly the degree of the polynomial growth of the
codimension sequence of $A$.
\end{theorem}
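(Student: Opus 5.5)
The upper bound is exactly Lemma~\ref{lem:polynomial-upper-bound}, so the task is the lower bound $c_n(A)\geq cn^{s_A}$ for large $n$. We work over an algebraically closed field, which is harmless since codimensions and $s_A$ are invariant under scalar extension.

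Since $A$ is nonnilpotent, fundamental and of polynomial growth, Proposition~\ref{prop:fundamental-polynomial-growth} leaves two cases. Either $A\cong F$, or $A\cong F+J(A)$ and the stated cocharacter condition holds.

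\emph{Case $A\cong F$.} Here $s_A=0$ and $c_n(A)=1$ for all $n$, since $A$ is commutative and nonzero. The lower bound holds with $c=1$.

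\emph{Case $A\cong F+J(A)$.} Here $t_A=1$. By Proposition~\ref{prop:fundamental-polynomial-growth}(3), or equivalently Theorem~\ref{thm:fundamental-character}, there is $n_0$ such that for each $n\geq n_0$ some partition $\lambda=(n-s_A,\mu)\vdash n$ with $\mu\vdash s_A$ has $m_\lambda(A)\geq1$. Then
\[
c_n(A)=\sum_{\nu\vdash n}m_\nu(A)d_\nu\geq d_\lambda .
\]

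It remains to bound $d_\lambda$ from below uniformly over the finitely many choices of $\mu\vdash s_A$. I would use the hook-length formula $d_\lambda=n!/\prod h_{ij}$ and split the hooks by row.
\begin{itemize}
\item The hooks in rows $2,3,\ldots$ are the hooks of $\mu$, with product at most $s_A!$.
\item The first-row hook at column $j$ is $\lambda_1-j+1+\mu'_j$, where $\mu'_j$ is the number of boxes of $\mu$ in column $j$.
\end{itemize}
For columns $j>\mu_1$ these hooks are $n-s_A-j+1$, so their product is $(n-s_A-\mu_1)!$. For columns $j\leq\mu_1$, of which there are at most $s_A$, each hook is at most $n$. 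Hence
\[
\prod_{(i,j)\in[\lambda]}h_{ij}\leq s_A!\,n^{\mu_1}(n-s_A-\mu_1)!,
\]
and therefore
\[
d_\lambda\geq\frac{n!}{s_A!\,n^{\mu_1}\,(n-s_A-\mu_1)!}.
\]
Since $n!/(n-s_A-\mu_1)!\geq (n-2s_A)^{s_A+\mu_1}$, this gives
\[
d_\lambda\geq\frac{(n-2s_A)^{s_A+\mu_1}}{s_A!\,n^{\mu_1}}\geq c\,n^{s_A}
\]
for $n\geq 4s_A$, say, with, for instance, $c=2^{-2s_A}/s_A!$.

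The main obstacle is making this hook estimate clean and uniform in $\mu$; it is elementary. Fundamentality does the real work by guaranteeing $m_\lambda(A)>0$ for some $\lambda$ with exactly $s_A$ boxes outside the first row in every large degree. Combining the lower bound with Lemma~\ref{lem:polynomial-upper-bound} gives $cn^{s_A}\leq c_n(A)\leq Cn^{s_A}$ for $n\geq n_0$, so the growth degree is exactly $s_A$.
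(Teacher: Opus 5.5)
Your proof is correct and follows the paper's argument: the upper bound comes from Lemma~\ref{lem:polynomial-upper-bound}, and the lower bound comes from the partition $\lambda=(n-s_A,\mu)$ with $m_\lambda(A)>0$ supplied by Proposition~\ref{prop:fundamental-polynomial-growth}, combined with a hook-length lower bound on $d_\lambda$ that is uniform in $\mu\vdash s_A$. The only difference is in that elementary estimate: you bound each of the $\mu_1$ first-row hooks in columns $j\leq\mu_1$ by $n$, which yields the explicit threshold $n\geq 4s_A$ and the constant $2^{-2s_A}/s_A!$, whereas the paper factors the first-row product exactly as $(n-s_A)!\,R_{n,\mu}$ and uses $R_{n,\mu}\to1$.
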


\begin{proof}
If $A\cong F$, then $s_A=0$ and $c_n(A)=1$ for every $n$, so the
conclusion is immediate. We may therefore assume that $A\not\cong F$.
The upper estimate follows from Lemma~\ref{lem:polynomial-upper-bound}. For the
lower estimate, Proposition~\ref{prop:fundamental-polynomial-growth} implies
that, for every sufficiently large $n$, there exists a partition
$\lambda\vdash n$ such that
\[
  \lambda_2+\cdots+\lambda_u=s_A
  \quad\text{and}\quad
  m_\lambda(A)>0.
\] Set \(s=s_A\) and \(\mu=(\lambda_2,\ldots,\lambda_u)\). Then
\(\mu\vdash s\) and \(\lambda=(n-s,\mu)\). Since \(n\) is sufficiently large, after increasing the threshold if necessary, we may assume that \(n\geq2s\), and hence \(n-s\geq s\).

We first establish a uniform estimate for all partitions $\mu\vdash s$.
Let $\mu'$ denote the conjugate partition of $\mu$. The hook lengths of
the boxes below the first row of $\lambda=(n-s,\mu)$ coincide with those of
$\mu$ and therefore have product $s!/d_\mu$. Moreover, the hook length
of the box in position $(1,j)$ is
$ n-s-j+1+\mu'_j.$ Consequently, the product of the hook lengths in the first row is
\[    \prod_{j=1}^{n-s}h_{1j}
    =(n-s)!\,R_{n,\mu},\quad \mbox{ where }\quad  R_{n,\mu}
      =
      \prod_{j=1}^{\mu_1}
      \left(1+\frac{\mu'_j}{n-s-j+1}\right).
\]
It follows that
\[
    \prod_{(ij)\in[\lambda]}h_{ij}
      =\frac{s!}{d_\mu}(n-s)!\,R_{n,\mu}.
\]

For every fixed $\mu\vdash s$, we have $R_{n,\mu}\to1$ as
$n\to\infty$. Since there are only finitely many partitions of $s$,
there exists $n_0$ such that $R_{n,\mu}\leq2$ for every $\mu\vdash s$ and every $n\geq n_0$. Hence, by the
hook-length formula,
\[
\begin{aligned}
    d_{(n-s,\mu)}=\frac{d_\mu}{s!R_{n,\mu}}
        \frac{n!}{(n-s)!}\geq\frac{d_\mu}{2s!}
        n(n-1)\cdots(n-s+1).
\end{aligned}
\]
Each factor in the product above is then at least $n/2$, and therefore
\[
    d_{(n-s,\mu)}
      \geq\frac{d_\mu}{2^{s+1}s!}\,n^s
      \geq\frac{1}{2^{s+1}s!}\,n^s.
\]
Thus, setting $c=\frac{1}{2^{s+1}s!},$ we obtain $d_{(n-s,\mu)}\geq cn^s$ for every $\mu\vdash s$ and every sufficiently large $n$.

Finally, for each sufficiently large $n$, choose the partition
$\lambda=(n-s,\mu)$ supplied by
Proposition~\ref{prop:fundamental-polynomial-growth}. Since
$m_\lambda(A)>0$, its multiplicity is at least one, and hence
\[
    c_n(A)
      \geq m_\lambda(A)d_\lambda
      \geq d_\lambda
      \geq cn^{s_A}.
\]
This proves the lower estimate.

\end{proof}

\begin{corollary}
\label{cor:quadratic-growth-criterion}
Let $A$ be a finite-dimensional, nonnilpotent fundamental algebra with
polynomial codimension growth.  Then $A$ has quadratic codimension growth if
and only if $s_A=2$.
\end{corollary}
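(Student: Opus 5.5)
This is an immediate consequence of Theorem~\ref{thm:fundamental-growth-degree}, and the plan is to show that the degree of polynomial growth is well defined and then read it off. The theorem gives constants $c,C>0$ and $n_0$ such that
\[
cn^{s_A}\leq c_n(A)\leq Cn^{s_A}\qquad (n\geq n_0).
\]
By definition, this says that $A$ has codimension growth of degree $s_A$.

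For the forward direction, assume $A$ has quadratic growth. Then there are constants $a,b>0$ and $n_1$ with $an^2\leq c_n(A)\leq bn^2$ for $n\geq n_1$. Suppose $s_A\neq 2$; I will compare the two-sided estimates to reach a contradiction.
\begin{itemize}
\item If $s_A>2$, then $cn^{s_A}\leq c_n(A)\leq bn^2$ for all large $n$. This forces $n^{s_A-2}\leq b/c$ for all large $n$, which is impossible.
\item If $s_A<2$, then $an^2\leq c_n(A)\leq Cn^{s_A}$ for all large $n$. This forces $n^{2-s_A}\leq C/a$ for all large $n$, again impossible.
\end{itemize}
Hence $s_A=2$.

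For the converse, assume $s_A=2$. Then Theorem~\ref{thm:fundamental-growth-degree} directly gives $cn^2\leq c_n(A)\leq Cn^2$ for $n\geq n_0$, which is exactly quadratic growth.

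The only point needing care is the uniqueness of the growth degree, namely that two-sided bounds of degrees $d\neq d'$ cannot hold simultaneously. The comparison of powers of $n$ above handles this, so there is no real obstacle. Nonnilpotency is needed only because it is a hypothesis of Theorem~\ref{thm:fundamental-growth-degree}: for a nilpotent algebra $c_n(A)$ is eventually $0$, so neither side of the equivalence behaves as expected.
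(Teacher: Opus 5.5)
Your proposal is correct and takes the same approach as the paper, which also obtains the corollary directly from Theorem~\ref{thm:fundamental-growth-degree} and only adds an explicit remark that for $s_A=2$ the relevant partitions are $(n-2,2)$ and $(n-2,1,1)$. Your comparison of powers of $n$ simply makes explicit the uniqueness of the growth degree that the paper covers with the word ``immediately''.
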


\begin{proof}
This follows immediately from
Theorem~\ref{thm:fundamental-growth-degree}. More explicitly, when $s_A=2$,
the partitions supplied by Theorem~\ref{thm:fundamental-character} are $(n-2,2)$ and $(n-2,1,1)$, whose degrees are, respectively,
\[
       d_{(n-2,2)}=\frac{n(n-3)}2,
       \qquad
       d_{(n-2,1,1)}=\frac{(n-1)(n-2)}2.
\]
At least one of them occurs with non-zero multiplicity for every sufficiently
large $n$, giving the quadratic lower bound; the quadratic upper bound follows
from Lemma~\ref{lem:polynomial-upper-bound}.  
\end{proof}

Theorem~\ref{thm:fundamental-decomposition} and
Corollary~\ref{cor:quadratic-growth-criterion} identify the structural core of
the classification problem.  After passing to finite dimension, the
nonnilpotent components that can contribute genuinely quadratic growth are
precisely the fundamental algebras with one-dimensional semisimple part and
$s_A=2$.  The following sections introduce the concrete algebras that generate
the minimal varieties of quadratic growth and use them to describe these
fundamental components up to PI-equivalence.

\section{Fundamental algebras with at most quadratic codimension growth}

We recall some algebras with at most quadratic codimension growth, together with their cocharacters and codimensions. Let $UT_m=UT_m(F)$ denote the algebra of $m\times m$ upper triangular matrices over $F$, and let $e_{ij}$ denote the usual matrix units.

We first consider the subalgebras of $UT_2:$
\[
   A_1=Fe_{11}+Fe_{12}
   \qquad\text{and}\qquad
   A_1^*=Fe_{22}+Fe_{12}.
\]

\begin{lemma}[{\cite[Lemma~3]{GLM2005}}]
\label{lem:A1-A1star}
For every $n\geq2$, we have $\chi_n(A_1)=\chi_n(A_1^*)
   =\chi_{(n)}+\chi_{(n-1,1)}$ and    $c_n(A_1)=c_n(A_1^*)=n$.
\end{lemma}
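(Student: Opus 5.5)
We compute the identities of $A_1$ directly and then read off the cocharacter from a spanning set of $P_n$ modulo $\operatorname{Id}(A_1)$. The case of $A_1^*$ is symmetric.

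\textbf{Identities of $A_1$.} Since $A_1=Fe_{11}+Fe_{12}$, we have $J(A_1)=Fe_{12}$ and $e_{12}A_1=0$. Evaluating on basis elements, we first check that $x_1[x_2,x_3]\equiv 0$ on $A_1$. Indeed, every commutator lies in $Fe_{12}$, and $a\,e_{12}$ is a multiple of $e_{12}$ only when $a$ has an $e_{11}$-component, so we must be careful. Computing $e_{11}[e_{11},e_{12}]=e_{11}e_{12}=e_{12}\neq0$ shows this guess is wrong. The correct identity is $[x_1,x_2]x_3\equiv0$: a commutator lies in $Fe_{12}$, and $e_{12}A_1=0$.

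\textbf{A spanning set for $P_n(A_1)$.} Modulo $[x_1,x_2]x_3$, any monomial $x_{\sigma(1)}\cdots x_{\sigma(n)}$ can have its first $n-1$ letters reordered freely. Hence $P_n(A_1)$ is spanned by the $n$ monomials
\[
x_{i_1}\cdots x_{i_{n-1}}x_k,\qquad i_1<\cdots<i_{n-1},\ k=1,\dots,n .
\]

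\textbf{Linear independence.} Suppose $\sum_k\alpha_k m_k\equiv0$, where $m_k$ ends in $x_k$. Substitute $x_k=e_{12}$ and all other variables equal to $e_{11}$. Then $m_k$ evaluates to $e_{11}^{\,n-1}e_{12}=e_{12}$, while every $m_j$ with $j\neq k$ contains $e_{12}$ in a non-final position and vanishes. Hence $\alpha_k=0$ for each $k$, and $c_n(A_1)=n$. The same argument shows that $\operatorname{Id}(A_1)$ is generated as a $T$-ideal by $[x_1,x_2]x_3$.

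\textbf{Cocharacter.} By Theorem~\ref{thm:bounded-colength} with $s_{A_1}=1$, only the partitions $(n)$ and $(n-1,1)$ can occur. The symmetric polynomial $x_1\cdots x_n$ (symmetrized) does not vanish at $x_i=e_{11}+e_{12}$, so $m_{(n)}=1$. Since $d_{(n-1,1)}=n-1$ and $c_n=n$, dimension counting forces $m_{(n-1,1)}=1$, because $m_{(n)}$ cannot exceed $1$ in a space of dimension $n$ in which $(n-1,1)$ must appear. To make this rigorous, note that $P_n(A_1)$ is isomorphic to the permutation module on the last variable, which is $\chi_{(n)}+\chi_{(n-1,1)}$.

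\textbf{The case $A_1^*$.} Apply the anti-automorphism of $UT_2$ given by transposition about the antidiagonal. It maps $A_1$ onto $A_1^*$, so $\operatorname{Id}(A_1^*)$ is obtained from $\operatorname{Id}(A_1)$ by reversing monomials. The identity $[x_1,x_2]x_3$ becomes $x_3[x_1,x_2]$, and the multiplicities are unchanged, since reversal commutes with the $S_n$-action. The only delicate step is the identification of $P_n(A_1)$ as a permutation module, which is immediate from the spanning set above.
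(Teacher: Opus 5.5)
Your proof is correct and follows essentially the standard direct computation behind \cite[Lemma~3]{GLM2005}, which the paper quotes without proof: the identity $[x_1,x_2]x_3\equiv0$, the $n$ spanning monomials indexed by their last variable, independence via $x_k=e_{12}$ and $x_j=e_{11}$ otherwise, the isomorphism of $P_n(A_1)$ with the natural permutation module (since $\sigma m_k\equiv m_{\sigma(k)}$), and the anti-isomorphism $A_1\to A_1^*$ with reversal of monomials. When writing it up, delete the abandoned guess $x_1[x_2,x_3]$ and the muddled ``dimension counting'' justification (nonvanishing of the symmetrized monomial only gives $m_{(n)}\geq1$, and the count also needs $m_{(n)}\leq d_{(n)}=1$), and let the permutation-module argument carry the cocharacter computation.
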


Next, consider the following unitary subalgebra of $UT_3$:
\[
   A_2=F(e_{11}+e_{22}+e_{33})+Fe_{12}+Fe_{13}+Fe_{23}.
\]

\begin{lemma}[{\cite[Lemma~4]{GLM2005}}]
\label{lem:A2}
For every $n\geq4$, we have   $\chi_n(A_2)
   =\chi_{(n)}+\chi_{(n-1,1)}+\chi_{(n-2,1,1)}$ and $
   c_n(A_2)=\frac{n(n-1)+2}{2}.$
\end{lemma}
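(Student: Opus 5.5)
We compute $\operatorname{Id}(A_2)$ directly and then decompose $P_n(A_2)$ as an $S_n$-module. Write $A_2=F\cdot 1+J$ with $J=\operatorname{span}\{e_{12},e_{13},e_{23}\}$, so that $J^2=Fe_{13}$ and $J^3=0$. Since $A_2$ is unitary and $s_{A_2}=2$, Theorem~\ref{thm:bounded-colength} already tells us that only partitions $\lambda$ with $n-\lambda_1\le 2$ can occur. The candidates are therefore $(n)$, $(n-1,1)$, $(n-2,2)$ and $(n-2,1,1)$.

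The first step is to show that $\operatorname{Id}(A_2)$ is generated as a $T$-ideal by $[x_1,x_2][x_3,x_4]$ and $[x_1,x_2,x_3]$. Both are identities. Commutators of elements of $A_2$ lie in $J$, and a product of two commutators lies in $J^2=Fe_{13}$. This product vanishes because $[a,b]$ has no $(1,2)$ or $(2,3)$ component unless the entries combine, and a direct check with $[a,b]=(a_{12}b_{23}-\dots)$ style formulas shows that $[J,J]\subseteq Fe_{13}$ and $J\cdot Fe_{13}=0$. Similarly, $[[a,b],c]\in[J^2,A_2]=0$ since $e_{13}$ is central in $A_2$.

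Because $A_2$ is unitary, $P_n$ modulo these identities is spanned by the proper polynomials, so it suffices to count proper multilinear polynomials. Modulo the two identities these are
\[
1 \quad\text{and}\quad [x_i,x_j] \ (i<j),
\]
multiplied by the remaining variables in increasing order. In $F\langle X\rangle$ the commutator $[x_i,x_j]$ with $i<j$ is not yet in this form, so more care is needed; we use the standard basis $x_{k_1}\cdots x_{k_m}[x_i,x_j]$ with $k_1<\dots<k_m$. This gives the upper bound $c_n\le 1+\binom n2$.

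For the matching lower bound, we evaluate on $1$, $e_{12}$ and $e_{23}$ to separate the $\binom n2$ commutators: put $x_i=e_{12}$, $x_j=e_{23}$ and all other variables equal to $1$. This yields linear independence, so
\[
c_n(A_2)=1+\binom n2=\frac{n(n-1)+2}{2}.
\]

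For the cocharacter, $(n)$ occurs because $A_2$ is unitary, witnessed by $x_1\cdots x_n$. Next, $(n-1,1)$ occurs since $x_1^{n-2}[x_1,x_2]$ is not an identity. The standard polynomial $\sum_{\sigma}\operatorname{sgn}(\sigma)\,x_1^{n-3}x_{\sigma(1)}x_{\sigma(2)}x_{\sigma(3)}$, the highest weight vector of shape $(n-2,1,1)$, is also not an identity: substituting $x_1=1+e_{12}$, $x_2=e_{12}$ and $x_3=e_{23}$ gives a nonzero multiple of $e_{13}$.

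The dimension count now pins down the multiplicities:
\[
1+(n-1)+\frac{(n-1)(n-2)}{2}=1+\binom n2 .
\]
This forces the multiplicities of $(n)$, $(n-1,1)$ and $(n-2,1,1)$ to be $1$ and that of $(n-2,2)$ to be $0$, whenever $n\ge4$, which is the range where all these shapes are distinct and valid. The main obstacle is the spanning step. It requires checking carefully that products of two commutators vanish in $A_2$, using $[J,J]\subseteq J^2$ and $J\cdot J^2=0$, and then appealing to the standard reduction to proper polynomials for unitary algebras.
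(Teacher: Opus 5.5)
Your argument is correct. The paper gives no proof of this lemma; it quotes \cite[Lemma~4]{GLM2005}, so what you have written is a self-contained verification rather than a reconstruction of an argument in the text.

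The usual route for a unitary algebra goes through proper polynomials. Modulo $[x_1,x_2,x_3]$ and $[x_1,x_2][x_3,x_4]$, the proper multilinear parts are $F$ in degree $0$ and the span of $[x_1,x_2]$ in degree $2$, the latter affording $\chi_{(1,1)}$; they vanish in all other degrees. Drensky's formula $\chi_n(A)=\sum_k\xi_k(A)\hat{\otimes}\chi_{(n-k)}$ and the Pieri rule then give $\chi_{(n)}+\chi_{(1,1)}\hat{\otimes}\chi_{(n-2)}=\chi_{(n)}+\chi_{(n-1,1)}+\chi_{(n-2,1,1)}$ directly. The value $c_n(A_2)=1+\binom n2$ is read off from the degrees.

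You instead obtain $c_n$ first, from an explicit spanning set and separating evaluations, and then fix the multiplicities with three non-vanishing highest weight vectors and a degree count. This avoids the outer-product machinery. As a by-product it proves $\operatorname{Id}(A_2)=\langle[x_1,x_2,x_3],[x_1,x_2][x_3,x_4]\rangle_T$. The price is that your witnesses are essential, not decorative. The degree count alone is not conclusive: for even $n$, $\chi_{(n)}+\frac n2\chi_{(n-1,1)}$ also has degree $1+\binom n2$, and only your $(n-2,1,1)$ witness rules it out.

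A few points should be tightened:
\begin{itemize}
\item In the independence step, $x_1\cdots x_n$ also evaluates to $e_{13}$ under $x_i=e_{12}$, $x_j=e_{23}$ with $i<j$. You must first kill its coefficient with the all-$1$ evaluation. Then note that every $[x_k,x_l]$ with $\{k,l\}\neq\{i,j\}$ has an entry equal to $1$ and so vanishes.
\item For the shape $(n)$, the highest weight vector is $x_1^n$ (equivalently, the symmetrization of $x_1\cdots x_n$), not $x_1\cdots x_n$ itself.
\item For $(n-1,1)$ you should record an explicit evaluation; for example, $x_1=1+e_{12}$, $x_2=e_{23}$ sends $x_1^{n-2}[x_1,x_2]$ to $e_{13}$.
\item The clean reason that commutators lie in $Fe_{13}$ is that the scalar part is central, so $[A_2,A_2]=[J,J]\subseteq J^2=Fe_{13}$. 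Since $e_{13}$ is central and annihilates $J$, both identities follow at once.
\end{itemize}
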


Finally, consider the following subalgebras of $UT_3$:
 $$  A_4   =Fe_{11}+Fe_{12}+Fe_{13}+Fe_{23},\quad 
   A_4^* =Fe_{33}+Fe_{12}+Fe_{13}+Fe_{23},$$
$$A_5=Fe_{22}+Fe_{12}+Fe_{13}+Fe_{23}\quad \mbox{ and }\quad A_6=F(e_{11}+e_{33})+Fe_{12}+Fe_{13}+Fe_{23}.$$

\begin{lemma}[{\cite[Lemma~6]{GLM2005}, \cite[Theorem~3.1]{JorgeVieira2006}}]
\label{lem:A4-A5-A6}
For every $B\in\{A_4,A_4^*,A_5,A_6\}$ and every $n\geq4$, we have $\chi_n(B)
   =\chi_{(n)}+2\chi_{(n-1,1)}
    +\chi_{(n-2,2)}+\chi_{(n-2,1,1)}$ and $c_n(B)=n(n-1)$.
\end{lemma}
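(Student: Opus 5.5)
Since the statement is cited from \cite{GLM2005} and \cite{JorgeVieira2006}, the plan is to recompute the cocharacters directly for each $B\in\{A_4,A_4^*,A_5,A_6\}$. The multiplicities come from highest weight vectors, and the codimension follows from the hook formula. Every $B$ has semisimple part $F$ and radical $J=\operatorname{span}\{e_{12},e_{13},e_{23}\}$ with $J^3=0$, so $s_B=2$. By Theorem~\ref{thm:bounded-colength} and the remark after it, only partitions $\lambda$ with $n-\lambda_1\leq2$ can occur: $(n)$, $(n-1,1)$, $(n-2,2)$ and $(n-2,1,1)$. For each of these $\lambda$ I will determine $m_\lambda(B)$ using the standard criterion. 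The multiplicity $m_\lambda$ equals the maximal number of linearly independent highest weight vectors $f_T$ (one for each Young tableau $T$ of shape $\lambda$) modulo $\operatorname{Id}(B)$.

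\textbf{Upper bounds on multiplicities.} I will first collect a generating set of identities for each $B$. For $A_4$ the natural candidates are $x_1[x_2,x_3]x_4\equiv0$-type relations together with $[x_1,x_2]x_3x_4$ or $x_1x_2[x_3,x_4]$ consequences; for instance, in $A_4$ one has $[x_1,x_2][x_3,x_4]\equiv0$, $x_1x_2x_3x_4$ reduces, and since $e_{11}$ acts as a left unit one gets $[x_1,x_2]x_3\dots$ relations. The dual statements hold for $A_4^*$ via the transpose anti-automorphism. That anti-automorphism maps $\operatorname{Id}(A_4)$ to $\operatorname{Id}(A_4^*)$ by reversing monomials and preserves cocharacters, because reversal commutes with the $S_n$-action up to a sign-free relabelling. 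So $A_4^*$ requires no separate work. For $A_5$ (idempotent in the middle) and $A_6$ (idempotent $e_{11}+e_{33}$) I will proceed similarly. Using these identities, I will show that modulo $\operatorname{Id}(B)$ the space $P_n$ is spanned by $x_1\cdots x_n$ (symmetrized), by elements with one commutator-like insertion, and by elements with two. More concretely, I will bound the multiplicities as follows: $m_{(n)}\leq1$ trivially; $m_{(n-1,1)}\leq2$, since the highest weight vectors are determined by the position of $x_2$ relative to a block of $x_1$'s and the identities leave only two independent placements; and $m_{(n-2,2)},m_{(n-2,1,1)}\leq1$.

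\textbf{Lower bounds.} I will exhibit explicit highest weight vectors and evaluations showing they are not identities. For $\lambda=(n-1,1)$ I take $x_1^{n-2}[x_1,x_2]$ and $[x_1,x_2]x_1^{n-2}$ (or suitable variants), and prove their independence by finding a substitution such as $x_1=e_{11}+e_{23}$, $x_2=e_{12}$ in $A_4$ that separates them. For $(n-2,1,1)$ I use the standard polynomial $x_1^{n-3}\,\mathrm{St}_3(x_1,x_2,x_3)$-type element. For $(n-2,2)$ I use $x_1^{n-4}[x_1,x_2]^2$ or a symmetrization of $[x_1,x_2][x_1,x_2]$ placed around the idempotent. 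Here the substitution will mix $e_{12},e_{23}$ with the idempotent so that the product lands in $e_{13}$.

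\textbf{Codimensions and main obstacle.} Once $\chi_n(B)$ is established, the hook formula gives $d_{(n)}=1$, $d_{(n-1,1)}=n-1$, $d_{(n-2,2)}=n(n-3)/2$ and $d_{(n-2,1,1)}=(n-1)(n-2)/2$. Then
\[
c_n(B)=1+2(n-1)+\tfrac{n(n-3)}2+\tfrac{(n-1)(n-2)}2=n(n-1).
\]
I expect the main obstacle to be the sharp upper bound $m_{(n-1,1)}\leq2$ and, for $(n-2,2)$, the upper bound $m\leq1$. Doing this uniformly requires the right basis of identities for each algebra, especially for $A_6$, where the two-sided idempotent $e_{11}+e_{33}$ makes left and right behaviour symmetric. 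As a fallback for the upper bound, I would compute $c_n(B)\leq n(n-1)$ directly by counting a spanning set of multilinear monomials modulo identities. I would then combine this with the lower bounds $m_\lambda\geq$ the claimed values; the equality of dimensions then forces exact multiplicities.
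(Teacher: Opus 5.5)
Your framework is sound. Only $(n)$, $(n-1,1)$, $(n-2,2)$ and $(n-2,1,1)$ can occur, the hook-length arithmetic giving $n(n-1)$ is correct, and the anti-isomorphism $e_{ij}\mapsto e_{4-j,4-i}$ of $UT_3$ carries $A_4$ onto $A_4^*$. Your fallback logic is also valid: an upper bound $c_n(B)\le n(n-1)$ together with the lower bounds $m_\lambda\geq 1,2,1,1$ forces equality, since all four degrees are positive for $n\geq 4$.

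The gap is that the upper bound, which is the real content of the lemma, is never supplied. Moreover, the concrete ingredients you name do not work.
\begin{itemize}
\item For $A_4$, $x_1[x_2,x_3]x_4$ is not an identity: $e_{11}[e_{11},e_{12}]e_{23}=e_{13}$. Nor is $e_{11}$ a left unit, since $e_{11}e_{23}=0$. The correct relation is $[x_1,x_2]x_3x_4\equiv 0$, because $JA_4=J^2=Fe_{13}$ and $e_{13}A_4=0$. The relation $x_1[x_2,x_3]x_4$ is the one for $A_5$.
\item Consequently $[x_1,x_2]x_1^{n-2}\in\operatorname{Id}(A_4)$ for $n\geq 4$. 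Your own substitution sends it to $(e_{12}-e_{13})e_{11}=0$. The pair $x_1^{n-2}[x_1,x_2]$, $x_1^{n-3}[x_1,x_2]x_1$ works instead; it takes the values $e_{12}-e_{13}$ and $e_{13}$.
\item $x_1^{n-3}\mathrm{St}_3(x_1,x_2,x_3)=\sum_{\mathrm{cyc}}x_1^{n-3}[x_i,x_j]x_k$ lies in $\operatorname{Id}(A_5)$.
\item $x_1^{n-4}[x_1,x_2]^2$ vanishes on $A_4$ and $A_4^*$, whose commutator spaces $J_{10}$ and $J_{01}$ have square zero. It also vanishes on $A_5$ for $n\geq 5$, since $A_5e_{13}=0$.
\item For $A_6$ you give no identities at all. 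There a monomial is not determined modulo $\operatorname{Id}(A_6)$ by a few letter positions, so counting a spanning set of monomials is not yet an argument.
\end{itemize}

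Here is a uniform way to close the gap, bypassing both identities and highest weight vectors. Let $e$ be the idempotent of $B$ ($e_{11}$, $e_{33}$, $e_{22}$, $e_{11}+e_{33}$ respectively). Since $J^3=0$, a multilinear $f=\sum_\sigma\lambda_\sigma x_{\sigma(1)}\cdots x_{\sigma(n)}$ lies in $\operatorname{Id}(B)$ if and only if it vanishes on basis substitutions with at most two radical entries, all other variables equal to $e$. Put $x_p=e_{12}$ and $x_q=e_{23}$. A monomial survives, with value $e_{13}$, exactly when:
\begin{itemize}
\item for $A_4$: $x_px_q$ are its last two letters;
\item for $A_4^*$: $x_px_q$ are its first two letters;
\item for $A_5$: $x_p$ is first and $x_q$ is last;
\item for $A_6$: $x_q$ immediately follows $x_p$.
\end{itemize}
Let $\phi_{pq}(f)$ be the corresponding coefficient sum.

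Every other basis substitution gives $0$ or a linear combination of the $\phi_{pq}$. For $A_6$, for instance, $\sum_{p\neq q}\phi_{pq}(f)=(n-1)\sum_\sigma\lambda_\sigma$, and the coefficient sum of the monomials ending in $x_p$ is $\sum_\sigma\lambda_\sigma-\sum_q\phi_{pq}(f)$. Hence $\Phi=(\phi_{pq})_{p\neq q}$ is an $S_n$-equivariant map from $P_n$ to the permutation module $M$ on ordered pairs of distinct indices, with kernel $P_n\cap\operatorname{Id}(B)$.

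$\Phi$ is onto. For $A_4$, $A_4^*$ and $A_5$ a single monomial hits each basis vector. For $A_6$, take a directed cycle $C$ through all $n$ indices containing the edge $(p,q)$, and let $\Sigma_C$ be the sum of its edge vectors. The $n$ rotations of $C$ map to $\Sigma_C-\delta_{e'}$ for $e'\in C$; their sum is $(n-1)\Sigma_C$, so $\Sigma_C$, and then $\delta_{pq}$, lie in the image.

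Therefore $P_n(B)\cong M\cong\operatorname{Ind}_{S_{n-2}}^{S_n}\mathbf 1$. Young's rule, with $K_{\lambda,(n-2,1,1)}=1,2,1,1$ for $\lambda=(n),(n-1,1),(n-2,2),(n-2,1,1)$ and $0$ otherwise, gives the cocharacter and $c_n(B)=n(n-1)$ simultaneously.
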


Set
\[
   \mathcal L=\{A_1,A_1^*\}
   \qquad\text{and}\qquad
   \mathcal Q=\{A_2,A_4,A_4^*,A_5,A_6\}.
\]
The preceding lemmas show that the algebras in $\mathcal L$ have linear codimension growth, whereas those in $\mathcal Q$ have quadratic codimension growth. Recall that a variety $\mathcal V$ is said to be \emph{minimal of quadratic growth} if its codimension sequence has quadratic growth, whereas the
codimension sequence of every proper subvariety of $\mathcal V$ has at most linear growth. The following theorem shows that the algebras above exhaust all possibilities.

\begin{theorem}[{\cite[Corollary~4.6]{JorgeVieira2006}}]
\label{thm:minimal-quadratic-varieties}
The minimal varieties of quadratic codimension growth are precisely those generated by one of the algebras $A_2$, $A_4$, $A_4^*$, $A_5$ or $A_6$.
\end{theorem}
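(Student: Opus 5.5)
The plan is to prove the two inclusions separately: first that each of $A_2,A_4,A_4^*,A_5,A_6$ generates a minimal variety of quadratic growth, and then that every minimal variety of quadratic growth is generated by one of them.

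\emph{First direction.} Let $B\in\mathcal Q$. By Lemmas~\ref{lem:A2} and~\ref{lem:A4-A5-A6}, $\operatorname{var}(B)$ has quadratic growth. Let $\mathcal U\subsetneq\operatorname{var}(B)$ be a proper subvariety and pick $f\in\operatorname{Id}(\mathcal U)\setminus\operatorname{Id}(B)$; we may take $f$ multilinear.
\begin{itemize}
\item We will use the explicit generators of $\operatorname{Id}(B)$ from \cite{GLM2005,JorgeVieira2006}. For instance, $\operatorname{Id}(A_2)$ is generated by $[x_1,x_2][x_3,x_4]$ and $[x_1,x_2,x_3]$. The analogous generators for $A_4,\dots,A_6$ consist of products of length three in the radical-type positions.
\item With these we write down a basis of $P_n(B)$ adapted to the cocharacter decomposition. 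Each constituent $\chi_{(n)}$, $\chi_{(n-1,1)}$, $\chi_{(n-2,2)}$, $\chi_{(n-2,1,1)}$ is generated by an explicit highest weight vector, for example $x_1^{n-3}[x_2,x_1,x_1]$-type or $x_1^{n-4}[x_2,x_1][x_3,x_1]$-type elements.
\item We then show that the $T$-ideal generated by $f$ modulo $\operatorname{Id}(B)$ contains, for all large $n$, every highest weight vector of shape $(n-2,2)$ and $(n-2,1,1)$. The argument is to substitute $x_1$ for most variables of $f$ and multiply by suitable powers of $x_1$ on the appropriate side, which reduces $f$ to a nonzero combination of these vectors. Inequivalence of the resulting combinations is checked by evaluating on matrix units of $B$.
\end{itemize}
This gives $m_{(n-2,2)}(\mathcal U)=m_{(n-2,1,1)}(\mathcal U)=0$ for large $n$. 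Since $\mathcal U\subseteq\operatorname{var}(B)$ and $J(B)^3=0$, every other constituent of $\chi_n(\mathcal U)$ has $n-\lambda_1\le1$. Theorem~\ref{thm:bounded-colength} then bounds $c_n(\mathcal U)$ linearly.

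\emph{Second direction.} Let $\mathcal V$ be minimal of quadratic growth.
\begin{itemize}
\item By Corollary~\ref{cor:finite-dimensional-reduction} and Theorem~\ref{thm:fundamental-decomposition}, $\mathcal V=\operatorname{var}(B_1\oplus\cdots\oplus B_m)$ with each $B_i$ fundamental. By Theorem~\ref{thm:fundamental-growth-degree}, some $B_i=:A$ is nonnilpotent with quadratic growth. Minimality forces $\mathcal V=\operatorname{var}(A)$.
\item By Proposition~\ref{prop:fundamental-polynomial-growth} and Corollary~\ref{cor:quadratic-growth-criterion}, $A=F+J$ with $J^3=0$ and $J^2\neq0$. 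We may assume $F$ is algebraically closed.
\item With $e=1_F$, take the Peirce decomposition $J=J_{11}+J_{10}+J_{01}+J_{00}$, where $J_{10}=eJ(1-e)$ and so on.
\item A nonzero element of $J^2$ must arise from one of the products $J_{10}J_{00}$, $J_{00}J_{01}$, $J_{01}J_{10}$, $J_{10}J_{01}$, $J_{11}J_{11}$ (possibly with further factors).
\item In each case we exhibit elements spanning a subalgebra whose image is isomorphic to $A_4$, $A_4^*$, $A_5$, $A_6$, or $A_2$ respectively. For example, $e$, $a\in J_{10}$, $b\in J_{00}$ with $ab\neq0$ give $A_4$ via $e\mapsto e_{11}$, $a\mapsto e_{12}$, $b\mapsto e_{23}$, $ab\mapsto e_{13}$. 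To see that the quotient is isomorphic and not merely a homomorphic image, we use $J^3=0$ and kill the unwanted components.
\item Hence $\operatorname{var}(A)$ contains some $\operatorname{var}(M)$ with $M\in\mathcal Q$, and minimality gives equality.
\end{itemize}

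\emph{Main obstacle.} The difficult step is the case $J^2\subseteq J_{11}J_{11}$, together with the various mixed cases, where $J^2\neq0$ but the obvious subalgebra could be commutative and only of linear growth. For example, $J_{11}$ commutative generates at most linear growth. Here I would first invoke fundamentality: Proposition~\ref{prop:fundamental-polynomial-growth} supplies for every large $n$ a nonzero constituent $(n-2,2)$ or $(n-2,1,1)$. Evaluating the corresponding highest weight vectors, say $x_1^{n-4}[x_2,x_1][x_3,x_1]$ or the standard polynomial analog, on $e+J$ forces a noncommuting pair in $J_{11}$ with nonzero product of commutators, or a genuinely nonzero mixed product. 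That witness is what produces the copy of $A_2$ (or of the relevant algebra in $\mathcal Q$), and this is the step on which the whole case analysis depends.
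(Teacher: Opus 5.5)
\paragraph{Assessment.} The paper does not prove this theorem; it quotes it from \cite{JorgeVieira2006}. The proposal has a real gap in each direction, and the one in the first direction is the serious one.

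\paragraph{Second direction.} Your argument here follows the machinery of Section~4: reduction to a fundamental $A=F+J$ with $J^3=\{0\}$, the Peirce components, and the detectors of Lemmas~\ref{A1A2A4} and~\ref{lem:A5A6-detectors}. The step you call the main obstacle, however, is only asserted.
\begin{itemize}
\item Your list of products spanning $J^2$ omits $J_{00}J_{00}$, $J_{11}J_{10}$ and $J_{01}J_{11}$. These can be nonzero while all five detectors vanish.
\item The witness you hope to extract from highest weight vectors (a pair in $J_{11}$ with nonzero product of commutators) cannot exist, since such a product lies in $J^4=\{0\}$. The relevant witness is a single nonzero commutator.
\end{itemize}
The missing ingredient is exactly Lemma~\ref{lem:no-quadratic-mechanism}. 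If $C=P_4=P_{4^*}=P_5=P_6=\{0\}$, then $J_{00}$ splits off as a nilpotent direct summand. The rest, $F+J_{10}+J_{01}+J_{11}$, is PI-equivalent to $F$, $A_1$, $A_1^*$ or $A_1\oplus A_1^*$. So $c_n(A)$ is linearly bounded, contradicting Corollary~\ref{cor:quadratic-growth-criterion}.

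\paragraph{First direction.} Your claim is that every $f\notin\Id(B)$ yields, modulo $\Id(B)$, all highest weight vectors of shapes $(n-2,2)$ and $(n-2,1,1)$ for large $n$. This is just a reformulation of minimality. The mechanism you describe (substituting $x_1$ for variables and multiplying by powers of $x_1$) cannot prove it. Take $B=A_4$.
\begin{itemize}
\item A multilinear polynomial is determined modulo $\Id(A_4)$ by the coefficient sums indexed by the ordered pair of its last two variables.
\item Hence swapping the last two letters is a well-defined involution of $F\langle X\rangle/\Id(A_4)$. It commutes with linear substitutions and with left multiplication.
\item Its fixed part in degree $n$ is the permutation module on unordered pairs, $\chi_{(n)}+\chi_{(n-1,1)}+\chi_{(n-2,2)}$, which has no $(n-2,1,1)$ constituent.
\item Moreover $A_4/(Fe_{13}+Fe_{23})\cong A_1$ and $(Fe_{13}+Fe_{23})A_4=\{0\}$. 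So $gh\in\Id(A_4)$ whenever $g\in\Id(A_1)$ and $h$ has positive degree.
\end{itemize}
Now start from the swap-invariant polynomial
\[
f=x_3x_4(x_1x_2+x_2x_1)+x_1x_2(x_3x_4+x_4x_3)-x_2x_4(x_1x_3+x_3x_1)-x_1x_3(x_2x_4+x_4x_2)\in\Id(A_1)\setminus\Id(A_4).
\]
Your operations never leave the swap-fixed part, so they never produce an $(n-2,1,1)$ vector. Substitutions of products such as $x_1\mapsto x_1x_5$ are indispensable, and you give no argument that works for all $f$ and all five algebras.

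\paragraph{A way to repair it.} Run your second-direction argument on an arbitrary proper subvariety $\mathcal U\subsetneq\var(B)$ that is not linearly bounded.
\begin{itemize}
\item $\mathcal U$ has quadratic growth, because its growth degree is $\max s_{B_i}$ over its nonnilpotent fundamental components (Theorem~\ref{thm:fundamental-growth-degree}). So $\mathcal U$ contains some $M\in\mathcal Q$.
\item Then $M\in\var(B)$ already forces $\var(M)=\var(B)\subseteq\mathcal U$, a contradiction. The inputs are these:
\begin{itemize}
\item By Lemma~\ref{lem:A2}, $A_2$ has no $(n-2,2)$ constituent.
\item By Lemma~\ref{lem:A4-A5-A6}, the other four algebras have equal codimensions, so an inclusion of their $T$-ideals is an equality.
\item Each of $A_4,A_4^*,A_5,A_6$ satisfies an identity failing on $A_2$. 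For the first three you can take $[x_1,x_2]x_3x_4$, $x_1x_2[x_3,x_4]$ and $x_1[x_2,x_3]x_4$. For $A_6$ a degree-four identity failing on $A_2$ still has to be written down.
\end{itemize}
\end{itemize}
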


Recall that every algebra in $\mathcal L\cup\mathcal Q$ has a one-dimensional semisimple part. Their cocharacter decompositions allow us to apply Theorem~\ref{thm:fundamental-character} and obtain the following result.

\begin{proposition}
\label{prop:quadratic-algebras-fundamental}
Every algebra $M\in\mathcal L\cup\mathcal Q$ is fundamental. Moreover, $s_A=1$ and $s_B=2$ for all $A\in\mathcal L$ and $B\in\mathcal Q$.
\end{proposition}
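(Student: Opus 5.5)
The plan is to compute, for each $M\in\mathcal L\cup\mathcal Q$, the two parameters $t_M=\dim_F\overline M$ and $s_M$ directly from the matrix presentation. We then verify the cocharacter criterion of Proposition~\ref{prop:fundamental-polynomial-growth}(3), or equivalently Theorem~\ref{thm:fundamental-character}, using the cocharacters recorded in Lemmas~\ref{lem:A1-A1star}, \ref{lem:A2} and \ref{lem:A4-A5-A6}. All these algebras are defined over $F$ by matrix units, so passing to $\overline F$ changes nothing. By the discussion after Theorem~\ref{thm:bounded-colength}, neither cocharacters nor $s_M$ change under scalar extension.

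First, the Wedderburn--Malcev decompositions. For $A_1,A_1^*$ the radical is $Fe_{12}$ and the semisimple part is $Fe_{11}$ (respectively $Fe_{22}$). Hence $t=1$ and, since $e_{12}^2=0$ and $e_{12}\neq0$, $s=1$.

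For each $B\in\mathcal Q$ the radical is $J=Fe_{12}+Fe_{13}+Fe_{23}$, the strictly upper triangular part of $UT_3$ contained in $B$. This space is a nilpotent ideal of $B$. The quotient $B/J$ is one-dimensional and spanned by an idempotent: $e_{11}+e_{22}+e_{33}$, $e_{11}$, $e_{33}$, $e_{22}$ or $e_{11}+e_{33}$ respectively. Thus $t_B=1$. Moreover $J^2=Fe_{13}\neq0$ because $e_{12}e_{23}=e_{13}$, and $J^3=0$, so $s_B=2$. Each $M$ is nonnilpotent and not isomorphic to $F$, so alternative (3) of Proposition~\ref{prop:fundamental-polynomial-growth} is what must be checked. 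All these algebras have polynomial growth by the stated codimension formulas.

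For the criterion, we need a partition $\lambda\vdash n$ with $m_\lambda(M)>0$ and $\lambda_2+\cdots+\lambda_u=s_M$ for all large $n$.
\begin{itemize}
\item For $A_1,A_1^*$, Lemma~\ref{lem:A1-A1star} gives $\chi_{(n-1,1)}$ with multiplicity one, and $n-(n-1)=1=s$.
\item For $A_2$, Lemma~\ref{lem:A2} gives $\chi_{(n-2,1,1)}$, with $1+1=2=s$.
\item For $A_4,A_4^*,A_5,A_6$, Lemma~\ref{lem:A4-A5-A6} gives both $\chi_{(n-2,2)}$ and $\chi_{(n-2,1,1)}$, with boxes below the first row summing to $2$.
\end{itemize}
Hence each $M$ is fundamental.

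No step is a serious obstacle. The only point needing care is confirming that the radical is exactly the stated strictly upper triangular span and computing its nilpotency index. One must also note that $s_M$ in the criterion is the radical parameter, not merely the growth degree. Both are immediate from the multiplication of matrix units.
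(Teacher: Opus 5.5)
Your proposal is correct and follows the paper's proof. You compute $t=1$ and $s_M\in\{1,2\}$ from the matrix units, read off a constituent with exactly $s_M$ boxes below the first row ($\chi_{(n-1,1)}$ for $\mathcal L$, $\chi_{(n-2,1,1)}$ for $\mathcal Q$) from the cited cocharacter lemmas, and apply the criterion of Theorem~\ref{thm:fundamental-character}; the paper uses only $\chi_{(n-2,1,1)}$ for all of $\mathcal Q$, so also citing $\chi_{(n-2,2)}$ changes nothing.
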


\begin{proof}
For every $B\in\mathcal Q$, we have $J(B)=Fe_{12}+Fe_{13}+Fe_{23},$ with $J(B)^2=Fe_{13}\neq\{0\}$ and $J(B)^3=\{0\}$. Hence $s_B=2$. By Lemmas~\ref{lem:A2} and~\ref{lem:A4-A5-A6}, the constituent $\chi_{(n-2,1,1)}$ occurs with positive multiplicity in $\chi_n(B)$ for every $B\in\mathcal Q$ and every $n\geq4$. The corresponding partition $\lambda=(n-2,1,1)$ satisfies $\lambda_2+\lambda_3=2=s_B$. Since $t_B=1$, Theorem~\ref{thm:fundamental-character} implies that every $B\in\mathcal Q$ is fundamental.

The same argument applies to every $A\in\mathcal L$, with $s_A=1$, since $J(A)=Fe_{12}\neq\{0\}$, $J(A)^2=\{0\}$ and $\chi_{(n-1,1)}$ occurs in $\chi_n(A)$ for every $n\geq2$.
\end{proof}

Thus, the algebras in $\mathcal Q$ play two complementary roles: they generate precisely the minimal varieties of quadratic growth and, at the same time, they are fundamental algebras with radical parameter equal to $2$. 

\section{Decomposition of the Jacobson radical and the detector algebra}

In this section, we consider finite-dimensional algebras $A=F+J$, where $J=J(A)$ and $J^3=\{0\}$. This setting is motivated by Proposition~\ref{prop:fundamental-polynomial-growth} and Corollary~\ref{cor:quadratic-growth-criterion}, which together show that every nonnilpotent fundamental algebra of quadratic codimension growth has this form. To determine the multilinear identities of such an algebra, it suffices to consider evaluations with at most two radical entries, all remaining entries being $1_F$. We use the components of $J$ to construct a direct sum $B(A)$ of the algebras introduced in Section~3 and prove that $A\sim_T B(A)$ whenever $A$ is fundamental and has quadratic codimension growth.

By \cite[Lemma~9.7.1]{GiambrunoZai}, we have the direct vector-space
decomposition
\begin{equation} \label{decompj}
    J(A)=J_{10}+J_{01}+J_{00}+J_{11},
\end{equation}
where $J_{ik}
=
\{a\in J\mid 1_Fa=ia,\ a1_F=ka\},$ $ i,k\in\{0,1\}.$ Moreover, $J_{ik}J_{rs}\subseteq\delta_{kr}J_{is}$ where $\delta_{kr}$ is the Kronecker delta function.

We introduce the following subspaces, defined in terms of products and commutators of the components above:
$$
 P_4:=J_{10}J_{00}\subseteq J_{10},\quad P_{4^*}:=J_{00}J_{01}\subseteq J_{01},$$ $$
 P_5:=J_{01}J_{10}\subseteq J_{00},
 \quad P_6:=J_{10}J_{01}\subseteq J_{11}\quad \mbox{ and }\quad  
 C:=[J_{11},J_{11}]\subseteq J_{11}^2.
$$

\begin{remark}\label{lem:ideais}
Let $A=F+J$ be an algebra with $J=J(A)$ and suppose that $J^3=\{0\}$. Then $P_4,P_{4^*},P_5,P_6$ and $C$ are
two-sided ideals of $A$. Moreover,
$P_5\cap P_6=\{0\}.$
\end{remark}

\begin{proof}
Let $I$ be any of the subspaces $P_4$, $P_{4^*}$, $P_5$, $P_6$ and $C$. By definition, every element of $I$ is a linear combination of products of two elements of $J$. Thus
$I\subseteq J^2$, and the hypothesis $J^3=\{0\}$ gives
$JI=IJ=\{0\}$. Moreover, the multiplication rules for the components
in \eqref{decompj} yield
\[
   P_4\subseteq J_{10},\qquad
   P_{4^*}\subseteq J_{01},\qquad
   P_5\subseteq J_{00},\qquad
   P_6,C\subseteq J_{11}.
\]
For $x\in J_{ij}$, we have $1_Fx=ix$ and $x1_F=jx$.
Hence $I$ is stable under left and right multiplication
by $1_F$. Since $A=F+J$, it follows that $I$ is a
two-sided ideal of $A$.

Finally, $P_5\subseteq J_{00}$ and $P_6\subseteq J_{11}$,
while $J_{00}\cap J_{11}=\{0\}$ by the directness of
\eqref{decompj}. Therefore, $P_5\cap P_6=\{0\}$.
\end{proof}

We next recall some results from \cite{GLM2005}. Throughout the next two lemmas, we fix a Wedderburn--Malcev decomposition $A=F+J(A)$ of a finite-dimensional algebra.

\begin{lemma}[{\cite[Lemmas~9, 10, and~15]{GLM2005}}]
\label{A1A2A4}
The following statements hold:
\begin{enumerate}
\item If \(J_{10}\neq\{0\}\), then \(A_1\in\operatorname{var}(A)\).
      If \(J_{01}\neq\{0\}\), then \(A_1^*\in\operatorname{var}(A)\).
\item If \(C=[J_{11},J_{11}]\neq\{0\}\), then
      \(A_2\in\operatorname{var}(A)\).
\item If \(P_4=J_{10}J_{00}\neq\{0\}\), then
      \(A_4\in\operatorname{var}(A)\). If
      \(P_4^*=J_{00}J_{01}\neq\{0\}\), then
      \(A_4^*\in\operatorname{var}(A)\).
\end{enumerate}
\end{lemma}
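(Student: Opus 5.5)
The plan is to prove each item by exhibiting, inside $\operatorname{var}(A)$, an algebra onto which a suitable subalgebra of $A$ maps with the required image. Since varieties are closed under subalgebras and homomorphic images, this is enough. In every case I start from a nonzero element in the relevant component, pass to the subalgebra generated by $1_F$ and a few radical elements, and then quotient by an ideal chosen so that what remains is isomorphic to the model algebra.

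For item (1), I pick $0\neq a\in J_{10}$. Then $1_Fa=a$, $a1_F=0$ and $a^2\in J_{10}J_{10}=\{0\}$, so $B=F1_F+Fa$ is a subalgebra. The assignment $1_F\mapsto e_{11}$, $a\mapsto e_{12}$ is an isomorphism $B\cong A_1$, because $e_{11}e_{12}=e_{12}$, $e_{12}e_{11}=0$ and $e_{12}^2=0$. The case $J_{01}$ is symmetric and gives $A_1^*$.

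For item (3), I choose $a\in J_{10}$ and $b\in J_{00}$ with $ab\neq0$, and take the subalgebra $B$ generated by $1_F,a,b$. Its elements lie in $F1_F+\operatorname{span}\{a,b,b^2,ab,ba,\dots\}$, but several products vanish for structural reasons: $ba\in J_{00}J_{10}=0$, $b^2\in J_{00}$ is a product in $J^2$, and every product of length three is $0$ because $J^3=0$. Let $I$ be the ideal of $B$ generated by $b^2$ and by any component of $b$ that does not contribute. More precisely, I quotient by $I=\operatorname{span}\{b^2\}$, which is an ideal since $J\cdot J^2=0$ and $1_Fb^2=0=b^2 1_F$.

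In $B/I$, the elements $1_F,a,b,ab$ should behave like $e_{11},e_{12},e_{23},e_{13}$. The relations to check are: $1_Fb=b1_F=0$; $e_{11}e_{23}=0$; $e_{23}e_{11}=0$; $1_F(ab)=ab$; $(ab)1_F=0$; and $e_{23}e_{12}=0$, which matches $ba=0$. The main obstacle is confirming that $1_F,a,b,ab$ stay linearly independent modulo $I$, since $ab$ could lie in $\operatorname{span}\{b^2\}$ or in $\operatorname{span}\{a,b\}$. This is settled by the Peirce components: $ab\in J_{10}$ while $b^2\in J_{00}$, and $ab\in J^2$. If $ab$ were equal to $\alpha a$, then multiplying on the right by $b$ would give $ab\cdot b=\alpha ab$, and the left side is $0$ by $J^3=0$. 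The dual case $P_{4^*}$ is symmetric.

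For item (2), I take $a,b\in J_{11}$ with $[a,b]\neq0$. The subalgebra generated by $1_F,a,b$ is unital on these elements, because $1_F$ acts as the identity on $J_{11}$. I quotient by the span of $a^2$, $b^2$ and $ab+ba$. This is an ideal because it lies in $J^2$, which is annihilated by $J$ and fixed by $1_F$. The quotient is then spanned by $1,\bar a,\bar b,\bar a\bar b$, with $\bar a^2=\bar b^2=0$ and $\bar b\bar a=-\bar a\bar b\neq0$. The nonvanishing holds because $[a,b]=2ab-(ab+ba)$, so if $\bar a\bar b=0$ we would get $[a,b]\in I$; this needs a check with a generic choice of $a,b$, and is the delicate point. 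This quotient is the Grassmann-type algebra $E_2$ truncated, which is $T$-equivalent to $A_2$. Alternatively, one maps to $A_2$ with $a\mapsto e_{12}+e_{23}$-type elements and compares identities.
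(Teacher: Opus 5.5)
The paper does not prove this lemma; it cites \cite{GLM2005}. Your items (1) and (3) are fine. Item (1) is the subalgebra $F1_F+Fa\cong A_1$ (and its dual). Item (3) is the same subalgebra-and-quotient argument the paper uses for $A_5$ and $A_6$ in Lemma~\ref{lem:A5A6-detectors}, and it is valid under this section's standing hypothesis $J^3=\{0\}$. You should add the check that $\bar b\neq0$ modulo $Fb^2$: if $b=\lambda b^2$, then $b=\lambda^2b^3=0$.

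The genuine gap is in item (2), and a generic choice of $a,b$ cannot close it. Take $A=F1_F+Fa+Fb+Fc$ with $a^2=ab=c$, $ba=-c$, and all other products of $a,b,c$ equal to zero. Then $J=J_{11}$, $J^3=\{0\}$ and $[a,b]=2c\neq0$. For $x=\alpha a+\beta b+\mu c$ and $y=\gamma a+\delta b+\nu c$ one computes $x^2=\alpha^2c$, $y^2=\gamma^2c$ and $xy+yx=2\alpha\gamma c$. So $\operatorname{span}\{x^2,y^2,xy+yx\}$ is either $Fc=J^2$, in which case your quotient is commutative, or it is zero with $\alpha=\gamma=0$, in which case $xy=0$. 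No pair gives a Grassmann-type quotient.

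Worse, $A_2$ is not a subquotient of $A$ at all. By dimension, such a subquotient would be $A$ itself. But the square-zero elements of $J(A)$ form the single plane $Fb+Fc$, whereas in $J(A_2)$ they form the union of the planes $Fe_{12}+Fe_{13}$ and $Fe_{23}+Fe_{13}$. So item (2) must be proved at the level of identities. Your fallback ``compare identities'' is where the real work lies, and it is not done.

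Here is a direct way to do it. Let $f=\sum_\sigma\lambda_\sigma x_{\sigma(1)}\cdots x_{\sigma(n)}\in P_n\cap\Id(A)$ and pick $a,b\in J_{11}$ with $[a,b]\neq0$.
Evaluating every variable at $1_F$ gives $\sum_\sigma\lambda_\sigma=0$.
Now evaluate $x_p$ at $a$, $x_q$ at $b$, and the rest at $1_F$. This gives $\alpha_{pq}ab+\alpha_{qp}ba=\alpha_{pq}[a,b]$, where $\alpha_{pq}$ is the sum of the $\lambda_\sigma$ over monomials in which $x_p$ precedes $x_q$. Hence every $\alpha_{pq}=0$.
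On $A_2$ we have $J(A_2)^3=\{0\}$, and the only nonzero product of basis radical elements is $e_{12}e_{23}=e_{13}$. So $\sum_\sigma\lambda_\sigma$ and the numbers $\alpha_{pq}$ control every evaluation of $f$ on $A_2$, and therefore $f\in\Id(A_2)$.

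This is the $J_{11}\times J_{11}$ computation from Lemma~\ref{lem:transfer} run in reverse. It avoids the unproved PI-equivalence between $A_2$ and the Grassmann algebra on two generators that your sketch relies on, and it does not even need $J^3=\{0\}$.
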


The next lemma provides
analogous criteria for $A_5$ and $A_6$ to belong to $\operatorname{var}(A)$.

\begin{lemma} \label{lem:A5A6-detectors}
Let $A=F+J$, where $J=J(A)$, and suppose that $J^3=\{0\}$.
\begin{enumerate}
\item If $P_5=J_{01}J_{10}\neq\{0\}$, then
      $A_5\in\operatorname{var}(A)$.
\item If $P_6=J_{10}J_{01}\neq\{0\}$, then
      $A_6\in\operatorname{var}(A)$.
\end{enumerate}
\end{lemma}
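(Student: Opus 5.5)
The strategy is to exhibit $A_5$ (resp.\ $A_6$) as a homomorphic image of a subalgebra of $A$. Since $\operatorname{var}(A)$ is closed under subalgebras and homomorphic images, this gives $\operatorname{Id}(A)\subseteq\operatorname{Id}(A_5)$ (resp.\ $\operatorname{Id}(A_6)$).

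\emph{Part (1).} If $P_5\neq\{0\}$, choose $a\in J_{01}$ and $b\in J_{10}$ with $ab\neq0$. Recall that $ab\in J_{00}$. Set $e=1_F$ and $f=ab$, and let $B=Fe+Fa+Fb+Ff\subseteq A$. We check that $B$ is a subalgebra using the relations $ea=0$, $ae=a$, $eb=b$, $be=0$, and $ef=fe=0$. The products $a^2\in J_{01}J_{01}=\{0\}$ and $b^2=0$ vanish, and every product of $f$ with an element of $J$ vanishes because $J^3=\{0\}$. The only other product is $ba\in J_{11}\cap J^2$, which need not lie in $B$. We therefore pass to the quotient by the ideal $I$ of $B+Fba$ generated by $ba$, which is $Fba$ itself since $ba\in J^2$ annihilates $J$ and $e\,ba=ba\,e=ba$.

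Now compare with $A_5=Fe_{22}+Fe_{12}+Fe_{13}+Fe_{23}$, where $e_{22}$ kills $e_{12}$ on the left, fixes it on the right, and so on. Consider the map $e\mapsto e_{22}$, $a\mapsto e_{12}$, $b\mapsto e_{23}$, $ab\mapsto e_{13}$. The Peirce relations match: $e_{12}\in(A_5)_{01}$, $e_{23}\in(A_5)_{10}$, $e_{12}e_{23}=e_{13}$, and $e_{23}e_{12}=0$. It remains to check that $e,a,b,ab,ba$ are linearly independent in $A$, or at least that $ab\notin\operatorname{span}(e,a,b,ba)$. This holds because these elements lie in distinct Peirce components: $e$ is semisimple, $a\in J_{01}$, $b\in J_{10}$, $ab\in J_{00}$, and $ba\in J_{11}$. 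Hence $(B+Fba)/Fba\cong A_5$. If $ba=0$, then $B$ itself is isomorphic to $A_5$.

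\emph{Part (2).} If $P_6\neq\{0\}$, choose $a\in J_{10}$ and $b\in J_{01}$ with $ab\neq0$, so that $ab\in J_{11}$. Take $B=Fe+Fa+Fb+Fab+Fba$ with $e=1_F$, and note that $ba\in J_{00}$. Closure follows as before. Quotient by the ideal $Fba$, which is legitimate since $e\,ba=ba\,e=0$ and $ba$ annihilates $J$. Map $e\mapsto e_{11}+e_{33}$, $a\mapsto e_{12}$, $b\mapsto e_{23}$, $ab\mapsto e_{13}$. In $A_6$, with $u=e_{11}+e_{33}$, we have $ue_{12}=e_{12}$, $e_{12}u=0$, $ue_{23}=0$, $e_{23}u=e_{23}$, and $ue_{13}=e_{13}u=e_{13}$. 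These agree with the Peirce types of $a\in J_{10}$, $b\in J_{01}$, and $ab\in J_{11}$. Also $e_{23}e_{12}=0$, which matches the image of $ba$ being zero. Linear independence again follows from the Peirce decomposition, since $ab\in J_{11}$, $ba\in J_{00}$, and $ab\neq0$.

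The main point needing care is verifying that the multiplication tables agree exactly, in particular that all products landing in $J^3$ vanish and that the image of $e$ acts by the correct idempotent action. This is routine via the rule $J_{ik}J_{rs}\subseteq\delta_{kr}J_{is}$. Finally, if one prefers to avoid quotients, the same conclusion follows by applying the evaluation argument directly: any multilinear $f\in\operatorname{Id}(A)$ vanishes on $B$, hence on its quotient $A_5$ (resp.\ $A_6$).
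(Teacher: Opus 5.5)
Your proof is correct and is essentially the paper's argument. The paper first passes to $A/P_6$ (resp.\ $A/P_5$), using that $P_5,P_6$ are ideals with $P_5\cap P_6=\{0\}$, and then takes the subalgebra generated by $1_F,a,b$; you take that subalgebra first and quotient only by the one-dimensional ideal $F\,ba$, with the directness of the Peirce decomposition keeping $1_F,a,b,ab$ independent. The two orderings are equivalent, since $\operatorname{var}(A)$ is closed under both subalgebras and homomorphic images.
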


\begin{proof}
By Remark~\ref{lem:ideais}, $P_5$ and $P_6$ are ideals with trivial
intersection.

Suppose first that $P_5\neq \{0\}$, and consider the quotient
$\overline A=A/P_6$. The product $J_{01}J_{10}$ remains nonzero, whereas
$J_{10}J_{01}=\{0\}$ in $\overline A$. Choose $a\in J_{01}$ and
$b\in J_{10}$ such that $ab\neq0$. The subalgebra generated by $1_F,a,b$
has basis $1_F,a,b,ab$ and satisfies
\[
 1_Fa=0,\quad a1_F=a,\quad 1_Fb=b,\quad b1_F=0,
 \quad ab\neq0,\quad ba=0.
\]
Since $J^3=\{0\}$, all other products of radical elements vanish. This
subalgebra is isomorphic to $A_5$. Therefore, $A_5\in \textnormal{var}(\overline{A})\subseteq \textnormal{var}(A).$

Suppose now that $P_6\neq \{0\}$, and consider the quotient $A/P_5$. Choose
$a\in J_{10}$ and $b\in J_{01}$ such that $ab\neq0$. Then
\[
 1_Fa=a,\quad a1_F=0,\quad 1_Fb=0,\quad b1_F=b,
 \quad ab\neq0,\quad ba=0,
\]
and $1_F(ab)=(ab)1_F=ab$. The subalgebra generated by $1_F,a,b$ is isomorphic
to $A_6$. Therefore, $A_6\in \textnormal{var}(A/P_5)\subseteq \textnormal{var}(A).$
\end{proof}

Let $A=F+ J$ such that $J^3=\{0\}$. For any subspace $U\subseteq A$, define
\[
 \delta(U)=
 \begin{cases}
  1, & \text{if } U\neq \{0\},\\
  0, & \text{if } U=\{0\}.
 \end{cases}
\]
We consider the following indicator parameters:
$$
 \varepsilon_1     =\delta(J_{10}),\quad 
 \varepsilon_{1^*}=\delta(J_{01}),\quad 
 \varepsilon_2    =\delta(C),$$ $$
 \varepsilon_4     =\delta(P_4),\quad 
 \varepsilon_{4^*}=\delta(P_{4^*}),\quad 
 \varepsilon_5    =\delta(P_5),\quad 
 \varepsilon_6    =\delta(P_6).
$$

If $M$ is an algebra and $\eps\in\{0,1\}$, the notation
$M^\eps$ means that $M$ occurs as a direct summand when $\eps=1$ and is
omitted when $\eps=0$. Set
\begin{equation}
\label{eq:detector-sum}
 B(A)=A_1^{\eps_1}\oplus (A_1^*)^{\eps_{1^*}}
 \oplus A_2^{\eps_2}\oplus A_4^{\eps_4}
 \oplus (A_4^*)^{\eps_{4^*}}\oplus A_5^{\eps_5}
 \oplus A_6^{\eps_6}.
\end{equation} Thus $B(A)$ records the nonvanishing of $J_{10}$ and $J_{01}$, together with the selected products and commutators that can produce quadratic growth.  

\begin{remark}\label{rem:B(A)}
Notice that, by Lemmas~\ref{A1A2A4} and~\ref{lem:A5A6-detectors}, every nonzero summand of \(B(A)\) lies in \(\operatorname{var}(A)\). Hence $B(A)\in\operatorname{var}(A)$, and therefore $\operatorname{Id}(A)\subseteq\operatorname{Id}(B(A)).$
\end{remark}

We first show that, in the absence of all quadratic detectors, the
codimension sequence cannot have quadratic growth.

\begin{lemma}
\label{lem:no-quadratic-mechanism}
Let $A=F+J(A)$ be a finite-dimensional algebra with $J(A)^3=\{0\}$. If $C=P_4=P_{4^*}=P_5=P_6=\{0\}$ then there exists a constant $\alpha> 0$ such that $c_n(A)\leq  \alpha n$, for all $n \geq 1$. Consequently, a fundamental algebra with $s_A=2$
cannot satisfy these five equalities simultaneously.
\end{lemma}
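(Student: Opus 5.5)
Under the hypotheses $C=P_4=P_{4^*}=P_5=P_6=\{0\}$, the plan is to show that every multilinear monomial of degree $n$ is congruent, modulo $\operatorname{Id}(A)$, to a combination of at most linearly many fixed polynomials. It suffices to bound $\dim P_n(A)$ by the number of linearly independent multilinear functions $A^n\to A$ induced by monomials. We work over $\overline F$, which is allowed by Section~2. By multilinearity, each variable may be evaluated on a basis element lying in $F1_F$ or in one of the components $J_{ik}$. Since $J^3=\{0\}$, any evaluation with three or more radical entries vanishes. So a polynomial $f\in P_n$ is an identity as soon as it vanishes on all evaluations in which at most two variables take values in $J$ and all the others equal $1_F$.

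The main step is to compute monomials on these evaluations, using the products that are now zero. Let $w=x_{\sigma(1)}\cdots x_{\sigma(n)}$. With no radical entries, $w$ evaluates to $1_F$. With one radical entry $a\in J_{ik}$ placed on $x_j$, we have $w=1_F^{p}a1_F^{q}$. This product depends only on whether $x_j$ is first, last, or in the middle (or alone), and on $(i,k)$. With two radical entries $a\in J_{ik}$, $b\in J_{rs}$ on $x_j,x_l$, the value is $\pm$ a product $1_F^{\ast}a1_F^{\ast}b1_F^{\ast}$, or the same with $a,b$ swapped. Here $1_F^{\ast}$ denotes a possibly empty power, and the $1_F$ factors force the pair of components. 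The nonzero products of two radical elements are $J_{ik}J_{kr}$. We check each type with an idempotent between them and with $1_F$ on the outer sides:
\begin{itemize}
\item Products $J_{10}J_{00}$ and $J_{00}J_{01}$ vanish by $P_4=P_{4^*}=0$.
\item Products $J_{01}J_{10}$ and $J_{10}J_{01}$ vanish by $P_5=P_6=0$.
\item Products $J_{11}J_{11}$ are commutative because $C=0$.
\item Products $J_{00}J_{00}$ survive only if no $1_F$ occurs anywhere, so they matter only for $n=2$.
\item The remaining products, such as $J_{11}J_{10}$, $J_{01}J_{11}$, $J_{11}J_{11}$ and $J_{00}J_{00}$, survive only under positional constraints: the $J_{10}$ factor is last, the $J_{01}$ factor is first, and so on.
\end{itemize}
The conclusion I aim for is that, for $n\ge3$, the function induced by $w$ on two-radical evaluations depends only on the first and last letters of $w$. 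I expect the main obstacle to be making this bookkeeping airtight: every surviving two-radical product, such as $J_{01}J_{11}J_{10}$-type patterns, must either vanish, be symmetric in the pair, or force the two radical variables to the boundary positions.

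Granting this, each monomial modulo $\operatorname{Id}(A)$ is determined by the first and last variables of $w$, with order mattering only through boundary positions. That gives at most $n(n-1)+n$ classes, which is still quadratic, so the argument has to be sharpened. Refine the analysis: a $J_{11}$ entry in the interior can be swapped freely because of commutativity and idempotence. Then show that a two-radical product is nonzero only if one of the radical entries is at a boundary and the other position is irrelevant. For example, $J_{11}J_{10}$ needs $J_{10}$ last but allows $J_{11}$ anywhere, and $J_{11}J_{11}$ is symmetric. Consequently $w$ is determined modulo $\operatorname{Id}(A)$ by the single pair (first letter, last letter) only in a degenerate way. A cleaner route is via cocharacters: show that $m_\lambda(A)=0$ for $\lambda=(n-2,2)$ and $\lambda=(n-2,1,1)$. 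For this, use the highest weight vectors $x_1^{n-2}[x_2,x_1]x_2$-type and $\mathrm{St}_3$-type polynomials with $x_1\mapsto1_F+\text{radical}$; the vanishing products above kill them. Then Lemma~\ref{lem:polynomial-upper-bound}'s computation restricted to $k\le1$ gives $c_n(A)\le \alpha n$.

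Finally, suppose $A$ is fundamental with $s_A=2$ and all five spaces vanish. Then Corollary~\ref{cor:quadratic-growth-criterion} gives quadratic growth, contradicting $c_n(A)\le\alpha n$.
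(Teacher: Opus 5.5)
\textbf{There is a genuine gap: the proposal never reaches a linear bound.}

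Your first route ends, as you note yourself, with monomials being determined modulo $\operatorname{Id}(A)$ by their first and last letters. That gives $n(n-1)+n$ classes, which is quadratic. The sentence that the pair matters ``only in a degenerate way'' is not an argument.

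You then switch to a cocharacter route, but it is only asserted:
\begin{itemize}
\item The polynomial $x_1^{n-2}[x_2,x_1]x_2$ has degree $n+1$, and it is not a highest weight vector. The derivation $x_2\mapsto x_1$ sends it to $x_1^{n-2}[x_2,x_1]x_1\neq 0$.
\item To obtain $m_{(n-2,2)}(A)=m_{(n-2,1,1)}(A)=0$, you must show that every highest weight vector $f_{T_\lambda}$ of these shapes vanishes on $A$, not one representative ``type''.
\item This includes the substitutions in which the surviving products $J_{11}J_{10}$, $J_{01}J_{11}$ and $J_{11}J_{11}$ occur. That is exactly the bookkeeping you flagged as the main obstacle and left undone, so ``the vanishing products kill them'' is unproved.
\end{itemize}

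\textbf{How to close it.} The missing step is already contained in your refinement remark; you only need to draw the right conclusion. Fix $n\ge 3$ and an evaluation with at most two radical basis entries and $1_F$ elsewhere. The map $\sigma\mapsto w_\sigma(\text{evaluation})$ is of one of three kinds:
\begin{itemize}
\item constant: no radical entry, one entry in $J_{11}$, or two entries in $J_{11}$ (here $ab=ba$);
\item dependent only on $\sigma(n)$: one entry in $J_{10}$, or a pair in $J_{11}\times J_{10}$, since the $J_{10}$-variable must be last;
\item dependent only on $\sigma(1)$: one entry in $J_{01}$, or a pair in $J_{01}\times J_{11}$.
\end{itemize}
Only four products could couple two positions. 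The product $J_{01}J_{10}$ would couple first and last jointly. The products $J_{10}J_{01}$, $J_{10}J_{00}$ and $J_{00}J_{01}$ would force adjacency. These are exactly $P_5$, $P_6$, $P_4$ and $P_{4^*}$, which vanish by hypothesis. Finally, $J_{00}J_{00}$ survives only when $n=2$.

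Consequently, $f=\sum_\sigma\lambda_\sigma w_\sigma$ is an identity of $A$ as soon as
\[
\sum_{\sigma(1)=i}\lambda_\sigma=\sum_{\sigma(n)=i}\lambda_\sigma=0 \quad\text{for all } i.
\]
The constant functional is implied, being the sum of the first-letter marginals. So $P_n\cap\operatorname{Id}(A)$ contains the kernel of the linear map $P_n\to F^n\oplus F^n$, $w_\sigma\mapsto(e_{\sigma(1)},e_{\sigma(n)})$, and therefore $c_n(A)\le 2n$ for every $n\ge1$.

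\textbf{Comparison with the paper.} Completed this way, your argument would be a self-contained, elementary alternative to the paper's proof. The paper instead splits $A=R\oplus N$, with $N=J_{00}$ and $R=F+J_{10}+J_{01}+J_{11}$. The vanishing of $P_4$ and $P_{4^*}$ makes $N$ an ideal annihilated by $R$. The paper then cites Giambruno--La Mattina to identify $R$, up to PI-equivalence, with $F$, $A_1$, $A_1^*$ or $A_1\oplus A_1^*$. That route yields the exact PI-type, while yours yields only the bound. Your deduction of the final statement about fundamental algebras with $s_A=2$ is fine.
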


\begin{proof}
Set $R=F+ J_{01}+ J_{10}+ J_{11}$ and $N=J_{00}.$ We first show that
$A=R\oplus N$ as a direct sum of algebras.

By the multiplication rule, $J_{ik}J_{rs}\subseteq\delta_{kr}J_{is}.$ Therefore,
\[
 J_{01}J_{00}=J_{11}J_{00}
 =J_{00}J_{10}=J_{00}J_{11}=\{0\}.
\]
Moreover, the assumptions $P_4=J_{10}J_{00}=\{0\}$ and $P_{4^*}=J_{00}J_{01}=\{0\}$ give the remaining mixed products involving $J_{00}$. Since $1_FJ_{00}=J_{00}1_F=\{0\},$ we obtain $RN=NR=\{0\}.$ Thus $N$ is a two-sided ideal of $A$ and $A=R\oplus N.$ Since $N\subseteq J$ and $J^3=\{0\}$, the algebra $N$ is nilpotent, with
$N^3=\{0\}$.

The equality $C=[J_{11},J_{11}]=\{0\}$ shows that $J_{11}$ is commutative. Furthermore, $P_5=J_{01}J_{10}=\{0\}$ and $ P_6=J_{10}J_{01}=\{0\}.$ We distinguish four cases.

If $J_{10}=J_{01}=
\{0\},$ then $R=F+ J_{11}$
is commutative. Thus, $\Id(R)=\Id(F),$ and so $R\simeqT F.$

Suppose that $J_{10}\neq \{0\}$ and $
 J_{01}=\{0\}.$ Then  $R=F+ J_{10}+ J_{11}.$ Since $J_{11}$ is commutative,
\cite[Lemma 10]{GLM2005} gives $R\simeqT A_1.$ 

Similarly, if $J_{10}=\{0\}$ and $
 J_{01}\neq \{0\},$ then \cite[Lemma 10]{GLM2005} yields $R\simeqT A_1^*.$

Finally, assume that $J_{10}\neq\{0\}$ and $J_{01}\neq \{0\}.$
Since $J_{11}$ is commutative and
$J_{10}J_{01}=J_{01}J_{10}=\{0\},$ \cite[Lemma 16]{GLM2005} implies
$R\simeqT A_1^*\oplus A_1.$

Since $A=R\oplus N$, we conclude that $A$ is PI-equivalent to one of
\[
 F\oplus N,\qquad
 A_1\oplus N,\qquad
 A_1^*\oplus N,\qquad
 A_1\oplus A_1^*\oplus N.
\]
These are precisely the nonnilpotent algebras occurring in the linear
classification of Giambruno and La Mattina
\cite[Theorem~22]{GLM2005}. Consequently, $c_n(A)\leq  \alpha n$, for all $n \geq 1$.

If $A$ is fundamental with $s_A=2$, Corollary \ref{cor:quadratic-growth-criterion} gives
$c_n(A)\geq \gamma n^2$ for some $\gamma>0$ and all sufficiently large $n$. This contradicts $c_n(A)\leq  \alpha n$, $n\geq 1$. Therefore, the consequence follows.
\end{proof}

The next result is the main technical step.

\begin{lemma}
\label{lem:transfer}
Let $A=F+J$ satisfying $J^3=\{0\}$ and suppose that at least one of the subspaces
$C$, $P_4$, $P_{4^*}$, $P_5$, $P_6$ is nonzero. Then
$\Id(B(A))\subseteq\Id(A)$ and $\eps_2+\eps_4+\eps_{4^*}+\eps_5+\eps_6\geq1.$
\end{lemma}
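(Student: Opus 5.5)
The second assertion is immediate: each of $C,P_4,P_{4^*},P_5,P_6$ nonzero turns on the corresponding indicator, so $\eps_2+\eps_4+\eps_{4^*}+\eps_5+\eps_6\geq1$. The real content is $\Id(B(A))\subseteq\Id(A)$. Since $\operatorname{char}F=0$, I will only consider a multilinear $f\in P_n\cap\Id(B(A))$ and show $f\equiv0$ on $A$. Because $A=F+J$, $f$ is linear in each variable and $J^3=\{0\}$, it suffices to check evaluations in which all but at most two variables are $1_F$ and the remaining ones are homogeneous elements of the components $J_{ik}$ from \eqref{decompj}.

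The plan is to sort these evaluations by the components of the radical entries and to match each type with one summand of $B(A)$ that realizes the same evaluation pattern. Write $f=\sum_{\sigma}\alpha_\sigma x_{\sigma(1)}\cdots x_{\sigma(n)}$.

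\textbf{No radical entries.} The value is $(\sum_\sigma\alpha_\sigma)1_F$. Every summand of $B(A)$ has an idempotent, and $B(A)\neq 0$ because some indicator equals $1$. So $\sum_\sigma\alpha_\sigma=0$, which can be read off a summand of $\mathcal Q$ by the all-idempotent substitution.

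\textbf{One radical entry $a\in J_{ik}$ in position $x_j$.} A monomial survives only if the $1_F$'s to the left of $x_j$ are allowed by $i$ and those to the right by $k$.
\begin{itemize}
\item For $a\in J_{00}$ only monomials starting and ending with $x_j$ survive (or $x_j$ alone if $n=1$).
\item For $a\in J_{11}$ all monomials survive.
\item For $a\in J_{10}$ only those ending in $x_j$ survive; for $a\in J_{01}$ only those beginning with $x_j$.
\end{itemize}
Each resulting scalar condition is exactly the vanishing of $f$ under a substitution in some summand. For example, the $J_{10}$ condition is what $A_1$ (via $e_{12}$) or $A_4$ (via $e_{12}$, with $e_{11}$ idempotent) gives. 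The $J_{11}$ condition is given by $A_2$ or $A_6$ (element $e_{13}$). The $J_{00}$ condition is given by $A_5$ (element $e_{13}$ with idempotent $e_{22}$).

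The point to watch here is whether the relevant summand is actually present. If, say, $J_{10}\neq0$, then $\eps_1=1$ and $A_1$ is present. If $J_{11}\neq 0$ and neither $A_2$ nor $A_6$ is present, I will instead use that the $J_{11}$ condition is just $\sum\alpha_\sigma=0$, which is already known. The $J_{00}$ case with $n\geq2$ is similar: I expect its condition to follow from the two-entry analysis or to be vacuous, and I will check this in that case. The one-entry $J_{00}$ evaluation is nonzero only when a $J_{00}$ element can appear between $1_F$'s, which it cannot; so only $n=1$ matters, and that case follows from the others.

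\textbf{Two radical entries $a\in J_{ik}$, $b\in J_{rs}$.} Any monomial with $a,b$ separated by $1_F$'s is controlled by the idempotent rules. The only nonzero products $ab$ or $ba$ that arise are in $J_{10}J_{00}=P_4$, $J_{00}J_{01}=P_{4^*}$, $J_{01}J_{10}=P_5$, $J_{10}J_{01}=P_6$, $J_{11}J_{11}$, $J_{10}J_{11}$, $J_{11}J_{01}$, $J_{00}J_{00}$, and so on. For each type I will write the evaluation as $\beta\,ab+\gamma\,ba$ with scalars $\beta,\gamma$ depending on $f$, and then show that the relevant coefficient vanishes by evaluating $f$ on the matching model:
\begin{itemize}
\item $P_4$: use $e_{12},e_{23}$ in $A_4$.
\item $P_{4^*}$: use $A_4^*$.
\item $P_5$: use $e_{12}\in J_{01}$ and $e_{23}\in J_{10}$ relative to $e_{22}$ in $A_5$.
\item $P_6$: use $A_6$.
\item $J_{11}J_{11}$: split $ab=\tfrac12(ab+ba)+\tfrac12[a,b]$. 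The commutator part needs $\beta-\gamma=0$ only if $C\neq0$, and that comes from $A_2$. The symmetric part is handled through a polynomial argument showing $\beta+\gamma$ is determined by the one-entry data.
\end{itemize}

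The main obstacle is the set of mixed products not covered by an indicator, such as $J_{10}J_{11}$, $J_{11}J_{01}$, $J_{00}J_{00}$ and the symmetric part of $J_{11}J_{11}$. For these I plan to show that the coefficient is forced by identities already obtained. For $J_{11}J_{11}$ the symmetric part is the $x_j\mapsto x_j+x_l$ linearization of the one-entry condition, so it is zero. For $J_{10}J_{11}$ the value is $\beta\,ab$ with $\beta$ a partial sum that I will try to rewrite as a combination of the one-entry $J_{10}$ sums, substituting $1_F+a$-type elements in $A_1$ (element $e_{11}+e_{12}$). If that fails, I will fall back on the quotient trick of Lemma~\ref{lem:A5A6-detectors}, or on the commutativity consequences as in the proof of Lemma~\ref{lem:no-quadratic-mechanism}. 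Finally, $J_{00}J_{00}$ lies in a nilpotent part whose monomials must consist of $x_j,x_l$ only, so it matters only for $n=2$. There I will treat low degrees separately, possibly absorbing them into the argument that $f$ has zero constant term in the nilpotent sense.
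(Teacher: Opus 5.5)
Your strategy is the paper's. You reduce to multilinear $f$, evaluate with at most two radical entries taken from the components $J_{ik}$, and write the two-entry value as $\beta\,ab+\gamma\,ba$. You then kill each relevant coefficient by the same substitution in the matching summand of $B(A)$. Your no-radical, one-radical, $P_4$, $P_{4^*}$, $P_5$, $P_6$ and $J_{11}J_{11}$ cases agree with the paper. For $J_{11}J_{11}$ no linearization is needed: every surviving monomial gives $ab$ or $ba$, so $\beta+\gamma=\sum_\sigma\alpha_\sigma=0$, and $A_2$ gives $\beta=0$ when $C\neq\{0\}$.

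\textbf{Gap.} The cases you single out as the main obstacle are left open, and partly misstated.

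\emph{Mixed $J_{11}$ products.} By $J_{ik}J_{rs}\subseteq\delta_{kr}J_{is}$, the products $J_{10}J_{11}$ and $J_{11}J_{01}$ are zero; the ones that can be nonzero are $J_{11}J_{10}$ and $J_{01}J_{11}$. Take $a\in J_{11}$ in $x_p$ and $b\in J_{10}$ in $x_q$. A monomial is nonzero exactly when $x_q$ is its last variable, and then it equals $ab$. So the coefficient is $\sum_{\sigma(n)=q}\alpha_\sigma$, which is literally the one-entry $J_{10}$ sum. It vanishes by putting $e_{12}$ in $x_q$ and $e_{11}$ in every other variable in $A_1$, which is present since $b\neq0$. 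The case $J_{01}J_{11}$ is symmetric, using $A_1^*$. No $e_{11}+e_{12}$ substitution is needed. Your fallbacks would not help either: the quotient trick of Lemma~\ref{lem:A5A6-detectors} only proves the opposite inclusion, that models lie in $\operatorname{var}(A)$. The commutativity of $J_{11}$ in Lemma~\ref{lem:no-quadratic-mechanism} came from $C=\{0\}$, which is not assumed here.

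\emph{The case $a,b\in J_{00}$ with $n=2$.} ``Zero constant term in the nilpotent sense'' is not an argument. What closes this case is that $f=\gamma x_1x_2+\delta x_2x_1\in\Id(B(A))$ forces $\gamma=\delta=0$. To see this, evaluate at $(e_{12},e_{23})$ and at $(e_{23},e_{12})$ in a quadratic summand, which exists by hypothesis.

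With these two repairs your argument becomes the paper's proof.
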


\begin{proof}
We first note that, by hypothesis, at least one of the subspaces
$C,P_4,P_4^*,P_5$, and $P_6$ is nonzero. Therefore, by the definition of the parameters $\varepsilon_i$ we have $\varepsilon_2+\varepsilon_4+\varepsilon_{4^*} +\varepsilon_5+\varepsilon_6\geq1.$

Since the ground field has characteristic zero, the $T$-ideal of an
algebra is determined by its multilinear elements. It is therefore
enough to prove that
$P_n\cap\Id(B(A))\subseteq P_n\cap\Id(A)$ for every $n\geq1$. Let
\[
 f(x_1,\ldots,x_n)
 =
 \sum_{\sigma\in S_n}
 \lambda_\sigma
 x_{\sigma(1)}\cdots x_{\sigma(n)}
 \in P_n\cap\Id(B(A)).
\]
Fix a vector-space basis of $A$ consisting of $1_F$ together with
homogeneous bases of
$J_{00},$ $ J_{01}$, $J_{10}$ and $J_{11}.$
By multilinearity, it suffices to show that every evaluation of $f$ on
this basis is zero.

If an evaluation contains $r$ radical entries then the value of every
monomial occurring in $f$ belongs to $J^r$. Consequently, every evaluation
containing at least three radical entries is zero since $J^3=\{0\}$.

Since $B(A)$ contains at least one of the algebras
$A_2,A_4,A_4^*,A_5,A_6$, each of which contains a copy of $F$, we have
$F\in\operatorname{var}(B(A))$. Therefore,
\begin{equation}
\label{eq:IdBA-IdF}
   \operatorname{Id}(B(A))\subseteq\operatorname{Id}(F).
\end{equation}

We now consider separately the evaluations containing zero, one or
two radical values.

If every variable is evaluated at $1_F$, then
\[
 f(1_F,\ldots,1_F)
 =
 \left(\sum_{\sigma\in S_n}\lambda_\sigma\right)1_F.
\]
On the other hand, \eqref{eq:IdBA-IdF} implies that $f$ is an identity
of $F$. Therefore, $\sum_{\sigma\in S_n}\lambda_\sigma=0.$
Hence $f(1_F,\ldots,1_F)=0$.

Suppose that $x_p$ is evaluated at an element
$a\in J_{ik}$ and all the remaining variables are evaluated at $1_F$.

If $a\in J_{10}$ then
$1_Fa=a$ and $a1_F=0.$ Thus a monomial has nonzero value precisely when $x_p$ is its last
variable, and every such monomial evaluates to $a$. Since
$J_{10}\neq \{0\}$, by Lemma \ref{A1A2A4}, the algebra $A_1$ occurs as a summand of $B(A)$.
Evaluating $x_p$ at $e_{12}$ and all remaining variables at $e_{11}$
in $A_1$, exactly the same monomials survive, with the same
coefficients. Since $f\in\Id(A_1)$, their coefficient sum is zero.
Therefore the original evaluation on $A$ is zero.

If $a\in J_{01}$, then
$1_Fa=0$ and $a1_F=a.$ In this case a monomial is nonzero precisely when $x_p$ is its first
variable. Since
$J_{01}\neq \{0\}$, by Lemma \ref{A1A2A4}, the algebra $A_1^*$ occurs as a summand of $B(A)$. The same coefficient sum is obtained by evaluating $x_p$ at
$e_{12}$ and all remaining variables at $e_{22}$ in $A_1^*$.
Since $f\in\Id(A_1^*)$, the evaluation is zero.

If $a\in J_{11}$ then $1_Fa=a1_F=a$. Consequently, by \eqref{eq:IdBA-IdF},
\[
 f(1_F,\ldots,a,\ldots,1_F)
 =
 \left(\sum_{\sigma\in S_n}\lambda_\sigma\right)a
 =
 0.
\]

Finally, if $a\in J_{00}$ and $n\geq2$ then every monomial contains an
occurrence of $1_F$ on the left or on the right of $a$. Since $1_Fa=a1_F=0,$
the evaluation is zero. If $n=1$ then $f=\lambda x_1$. Since $B(A)$
is nonzero, $f\in\Id(B(A))$ implies $\lambda=0$.

Now we assume that the evaluation contains two radical entries. Fix distinct variables $x_p,x_q$ and evaluate them at
elements $a\in J_{ik}$ and $b\in J_{rs},$
respectively. Evaluate every other variable at $1_F$.

For each $\sigma\in S_n$, put
\[
 m_\sigma=x_{\sigma(1)}\cdots x_{\sigma(n)}.
\]
Since the left and right actions of $1_F$ on every $J_{i_1i_2}$ are
either zero or the identity, there exist numbers $\eta_\sigma^{ab},\eta_\sigma^{ba}\in\{0,1\},$ depending only on $\sigma$, on $p,q$, and on the component $J_{uv}$ of $a$
and $b$, such that
\[
 m_\sigma(1_F,\ldots,a,\ldots,b,\ldots,1_F)
 =
 \eta_\sigma^{ab}ab+\eta_\sigma^{ba}ba.
\]
Here $\eta_\sigma^{ab}=1$ precisely when $x_p$ occurs before $x_q$ in
$m_\sigma$ and all occurrences of $1_F$ are compatible with the
indices of $J_{i_1i_2}$. Similarly, the coefficient $\eta_\sigma^{ba}$ is defined as $\eta_\sigma^{ba}=1$ precisely when $x_q$ occurs before $x_p$ in
$m_\sigma$ and all occurrences of $1_F$ are compatible with the nonzero products. In particular, these numbers are defined independently of whether
$ab$ or $ba$ is zero.

Set $\alpha=
 \sum_{\sigma\in S_n}
 \lambda_\sigma\eta_\sigma^{ab}$ and $
 \beta=
 \sum_{\sigma\in S_n}
 \lambda_\sigma\eta_\sigma^{ba}.$ Then
\begin{equation}
\label{eq:two-radicals-correct}
 f(1_F,\ldots,a,\ldots,b,\ldots,1_F)
 =
 \alpha ab+\beta ba.
\end{equation}

We now examine all products allowed by the multiplication rule $J_{ik}J_{rs}\subseteq\delta_{kr}J_{is}.$

Assume first that \(a,b\in J_{11}\). In this case every occurrence of $1_F$ acts as the identity, and hence
every monomial evaluates either to $ab$ or to $ba$. Recall, by  \eqref{eq:IdBA-IdF}, that, in this case, $\alpha+\beta= \sum_{\sigma\in S_n}\lambda_\sigma=0$. Thus,
\[
 \alpha ab+\beta ba
 =
 \alpha(ab-ba)
 =
 \alpha[a,b].
\]

If $[a,b]=0$, the evaluation is zero. Suppose that $[a,b]\neq0$. Then
$C=[J_{11},J_{11}]\neq \{0\}$ and by Lemma \ref{A1A2A4} $A_2$ occurs in $B(A)$ and $f\in\Id(A_2)$. Evaluate $x_p$ at
$e_{12}$, $x_q$ at $e_{23}$, and every remaining variable at $I_3=e_{11}+e_{22}+e_{33}$.
The monomials in which $x_p$ precedes $x_q$ yield $e_{13}$, whereas
the monomials in the opposite order vanish. Thus
\[
 0
 =
 f(I_3,\ldots,e_{12},\ldots,e_{23},\ldots,I_3)
 =
 \alpha e_{13}.
\]
Therefore $\alpha=0$, and the original evaluation is zero.

Suppose now that \(a\in J_{10}\) and \(b\in J_{00}\). The multiplication rule gives
$ab\in J_{10}$ and $ba=0$. If $ab=0$, there is nothing to prove. Assume that $ab\neq0$. Then
$P_4\neq \{0\}$, by Lemma \ref{A1A2A4}, $A_4$ occurs in $B(A)$ and $f\in\Id(A_4)$. Let $e_4=e_{11}$, $u=e_{12}$, $v=e_{23}$ in $A_4$. Relative to $e_4$, one has $u\in J(A_4)_{10}$, $v\in J(A_4)_{00}$,
$uv=e_{13}$ and $vu=0.$ Since components $J_{i_1\, i_2}$ of $u,v$ agree with those of $a,b$, respectively,
the same monomials contribute to $uv$ as contribute to $ab$ in
\eqref{eq:two-radicals-correct}. Hence
\[
 0
 =
 f(e_4,\ldots,u,\ldots,v,\ldots,e_4)
 =
 \alpha e_{13}.
\]
Thus $\alpha=0$.

Assume that \(a\in J_{00}\) and \(b\in J_{01}\). Here $ab\in J_{01}$ and $ba=0$. If $ab\neq0$ then $P_{4^*}\neq \{0\}$, and therefore, by Lemma \ref{A1A2A4}, $A_4^*$ occurs in $B(A)$ and $f\in\Id(A_4^*)$. Take $e_{4^*}=e_{33}$, $u=e_{12}$ and $v=e_{23}$ in $A_4^*$. Then
$u\in J(A_4^*)_{00}$, $v\in J(A_4^*)_{01},$
$uv=e_{13}$ and $vu=0$. The same coefficient comparison gives
\[
 0
 =
 f(e_{4^*},\ldots,u,\ldots,v,\ldots,e_{4^*})
 =
 \alpha e_{13},
\]
and hence $\alpha=0$.

Consider now the case in which \(a\in J_{10}\) and \(b\in J_{01}\).
This is the only case involving distinct components in which both oriented products may be nonzero.
Indeed, $ab\in J_{11}$ and $ ba\in J_{00}.$
Since the decomposition is direct, $J_{11}\cap J_{00}=\{0\}.$

Suppose first that $ab\neq0$. Then $P_6\neq \{0\}$, and by Lemma \ref{lem:A5A6-detectors}, $A_6$ occurs in
$B(A)$. In $A_6$, let $e_6=e_{11}+e_{33}$,
$u=e_{12}\in J(A_6)_{10}$ and $ v=e_{23}\in J(A_6)_{01}.$ One has $uv=e_{13}$ and $vu=0$. Therefore
\[
 0
 =
 f(e_6,\ldots,u,\ldots,v,\ldots,e_6)
 =
 \alpha e_{13},
\]
and hence $\alpha=0$.

Suppose now that $ba\neq0$. Then $P_5\neq \{0\}$, so
$f\in\Id(A_5)$. In $A_5$, let
$e_5=e_{22}$, $u=e_{23}\in J(A_5)_{10}$ and $ v=e_{12}\in J(A_5)_{01}$. In this case $uv=0$ and $vu=e_{13}.$ Consequently,
\[
 0
 =
 f(e_5,\ldots,u,\ldots,v,\ldots,e_5)
 =
 \beta e_{13},
\]
so $\beta=0$. Thus both contributions in
\eqref{eq:two-radicals-correct} vanish separately. 

Assume that \(a\in J_{11}\) and \(b\in J_{10}\). The multiplication rule gives $ab\in J_{10}$ and $ba=0.$ If $ab=0$, the evaluation is zero. Suppose that $ab\neq0$. A monomial
contributes to the coefficient of $ab$ precisely when the variable
evaluated at $b$ is the last variable of that monomial. Indeed, $1_F$
acts as the identity on both sides of $a$ and on the left of $b$, while
$b1_F=0$.

 Since $J_{10}\neq \{0\}$, the algebra $A_1$ occurs in $B(A)$,
and therefore $f\in\Id(A_1)$. Now evaluate the variable corresponding to $a$ at $e_{11}$, the
variable corresponding to $b$ at $e_{12}$, and all remaining variables
at $e_{11}$ in $A_1$. Exactly the same monomials survive, with the same
coefficients. Hence
$\alpha e_{12}=0,$ so $\alpha=0$.

Consider now \(a\in J_{01}\) and \(b\in J_{11}\). Here $ab\in J_{01}$ and $ba=0.$
A monomial contributes to $ab$ precisely when the variable evaluated
at $a$ is its first variable. In this case, we recall that $A_1^*$ occurs as a direct summand of $B(A)$. Replacing $a$ by $e_{12}$, $b$ by
$e_{22}$, and all remaining entries by $e_{22}$ in $A_1^*$ yields the
same coefficient. Since $f\in\Id(A_1^*)$, this coefficient is zero.

Finally, assume that \(a,b\in J_{00}\).
If $n\geq3$, each monomial contains at least one occurrence of $1_F$.
That occurrence lies either before both radical entries, between them
or after both. Since $1_F$ annihilates $J_{00}$ on both sides, every
monomial evaluates to zero.

Suppose that $n=2$. Then
$f(x_1,x_2)=\gamma x_1x_2+\delta x_2x_1.$ Choose a quadratic model $Q$ occurring in $B(A)$. In each of the
algebras $A_2$, $A_4$, $A_4^*$, $A_5$ and $A_6$, the elements $e_{12},e_{23}$ satisfy
$e_{12}e_{23}=e_{13}\neq0$ and $e_{23}e_{12}=0.$
Since $f\in\Id(Q)$, computing first
$f(e_{12},e_{23})$ and then $f(e_{23},e_{12})$ gives
$\gamma e_{13}=0$ and $
 \delta e_{13}=0.$ Thus $\gamma=\delta=0$.

The eight potentially nonzero products are
\[
 J_{00}J_{00},\quad
 J_{00}J_{01},\quad
 J_{01}J_{10},\quad
 J_{01}J_{11},\quad
 J_{10}J_{00},\quad
 J_{10}J_{01},\quad
 J_{11}J_{10},\quad
 J_{11}J_{11}.
\]
All of them have been considered above, up to interchanging $a$ and
$b$. Every other product vanishes by the multiplication rule.
Therefore every basis evaluation of $f$ on $A$ is zero and hence
$f\in\Id(A)$.

We have proved $P_n\cap\Id(B(A))
 \subseteq
 P_n\cap\Id(A)$ for every $n\geq1$. Since the ground field has characteristic zero,
multilinearization yields $\Id(B(A))\subseteq\Id(A).$ \end{proof}

\section{Fundamental algebras with quadratic codimension growth}
\label{sec:classification}

In this section, we classify, up to PI-equivalence, the fundamental algebras $A$ satisfying $s_A=2$. This enables us to prove that every variety of quadratic codimension growth is generated by a finite direct sum of algebras whose corresponding varieties are minimal and have at most quadratic growth. Recall that the minimal varieties of linear and quadratic growth were classified in \cite[Theorem 22]{GLM2005} and {\cite[Corollary~4.6]{JorgeVieira2006}}, respectively.

To state the classification, recall the detector algebra associated with
$A$:
\begin{equation*}
 B(A)=A_1^{\eps_1}\oplus (A_1^*)^{\eps_{1^*}}
 \oplus A_2^{\eps_2}\oplus A_4^{\eps_4}
 \oplus (A_4^*)^{\eps_{4^*}}\oplus A_5^{\eps_5}
 \oplus A_6^{\eps_6}.
\end{equation*}

Here, each parameter $\varepsilon_i\in\{0,1\}$ records the nonvanishing of a relevant component in the decomposition~\eqref{decompj} of $J(A)$, or of one of the product or commutator subspaces determined by these components. Accordingly, $B(A)$ is the direct sum of precisely those model algebras $A_i$ whose associated detectors are nonzero. Each selected algebra $A_i$ belongs to $\operatorname{var}(A)$ and generates a minimal subvariety of $\operatorname{var}(A)$.

\begin{theorem}\label{thm:main}
Let $F$ be an algebraically closed field of characteristic zero and let $A$ be a finite-dimensional, nonnilpotent fundamental $F$-algebra with polynomial codimension growth. Assume that
$s_A=2$. Then $A\simeqT B(A).$ Equivalently, there exist $\eps_1$, $\eps_{1^*}$, $\eps_2$, $\eps_4$, $\eps_{4^*}$, $\eps_5$, $\eps_6\in\{0,1\}$ such that
\begin{equation*}
\label{eq:main-classification}
 A\simeqT
 A_1^{\eps_1}\oplus(A_1^*)^{\eps_{1^*}}
 \oplus A_2^{\eps_2}\oplus A_4^{\eps_4}
 \oplus(A_4^*)^{\eps_{4^*}}
 \oplus A_5^{\eps_5}\oplus A_6^{\eps_6},
\end{equation*}
where $\eps_2+\eps_4+\eps_{4^*}+\eps_5+\eps_6\geq1.$
\end{theorem}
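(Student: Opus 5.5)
The plan is to assemble the theorem from the structural reductions of Section~2 and the two inclusions of $T$-ideals established in Section~4.

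\emph{Step 1: put $A$ in the form $A=F+J$ with $J^3=\{0\}$.} Since $A$ is nonnilpotent, fundamental and of polynomial growth, Proposition~\ref{prop:fundamental-polynomial-growth} leaves two cases: $A\cong F$, or $A\cong F+J(A)$ with a one-dimensional semisimple part. The first case is excluded, because $s_F=0\neq2$. Hence $A=F+J$, and the assumption $s_A=2$ means $J^3=\{0\}$ and $J^2\neq\{0\}$. We then fix the decomposition \eqref{decompj}, which determines the subspaces $C,P_4,P_{4^*},P_5,P_6$ and, through them, the detector algebra $B(A)$.

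\emph{Step 2: some quadratic detector is nonzero.} By the final assertion of Lemma~\ref{lem:no-quadratic-mechanism}, a fundamental algebra with $s_A=2$ cannot have $C=P_4=P_{4^*}=P_5=P_6=\{0\}$. So at least one of these subspaces is nonzero. This is exactly the hypothesis of Lemma~\ref{lem:transfer}, and that lemma gives both $\operatorname{Id}(B(A))\subseteq\operatorname{Id}(A)$ and $\eps_2+\eps_4+\eps_{4^*}+\eps_5+\eps_6\geq1$.

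\emph{Step 3: the reverse inclusion.} By Remark~\ref{rem:B(A)}, every summand of $B(A)$ lies in $\operatorname{var}(A)$, using Lemmas~\ref{A1A2A4} and~\ref{lem:A5A6-detectors}. Therefore $\operatorname{Id}(A)\subseteq\operatorname{Id}(B(A))$. Combined with Step~2 this gives $A\simeqT B(A)$, with the stated condition on the $\eps_i$; the displayed equivalent form is just the definition \eqref{eq:detector-sum} written out.

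\emph{Main obstacle.} The hard part is the transfer inclusion, but Lemma~\ref{lem:transfer} already handles it. What remains delicate is checking the hypotheses at each invocation:
\begin{itemize}
\item Lemma~\ref{lem:A5A6-detectors} needs $J^3=\{0\}$, which Step~1 supplies.
\item Lemma~\ref{lem:no-quadratic-mechanism} relies on Corollary~\ref{cor:quadratic-growth-criterion}, which requires $A$ fundamental, nonnilpotent and of polynomial growth; all three are hypotheses of the theorem.
\end{itemize}
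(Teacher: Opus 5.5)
Your proposal is correct and follows the paper's proof essentially step for step. You reduce to $A=F+J$ with $J^3=\{0\}\neq J^2$ via Proposition~\ref{prop:fundamental-polynomial-growth}, use Lemma~\ref{lem:no-quadratic-mechanism} together with the quadratic lower bound of Corollary~\ref{cor:quadratic-growth-criterion} to force a nonzero quadratic detector, and then combine Lemma~\ref{lem:transfer} with Remark~\ref{rem:B(A)}; the only cosmetic difference is that the paper re-derives the linear-versus-quadratic contradiction inline instead of citing the lemma's final assertion.
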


\begin{proof}
Recall from Proposition~\ref{prop:fundamental-polynomial-growth} that a fundamental algebra of
polynomial codimension growth is either nilpotent, isomorphic to $F$ or
of the form $F+J(A)$. Since $s_A=2$, the codimension
sequence of $A$ has quadratic growth, see for instance Theorem \ref{thm:fundamental-growth-degree}. Therefore, $A$ is neither nilpotent nor isomorphic to
$F$. Consequently, $A=F+J$, where $J=J(A).$ Moreover, the definition of $s_A$ gives $J^3=\{0\}$ and $    J^2\neq\{0\}.$ 

Let $B(A)$ be the detector algebra defined in \eqref{eq:detector-sum}. 
\begin{comment}
{\color{purple} We first prove that
$\Id(A)\subseteq\Id(B(A)).$ By Lemma~\ref{A1A2A4}, the nonvanishing of $J_{10}$, $J_{01}$, $[J_{11},J_{11}]$, $J_{10}J_{00}$ and $J_{00}J_{01}$ implies, respectively, that $A_1$, $A_1^*$, $A_2$, $A_4$, $A_4^*
    \in\var(A).$ Similarly, Lemma~\ref{lem:A5A6-detectors} shows that if $J_{01}J_{10}\neq\{0\}$ then $A_5\in\var(A)$, and if
$J_{10}J_{01}\neq\{0\}$ then $A_6\in\var(A).$ By the definition of the parameters $\varepsilon_i$, these are precisely the nonzero summands occurring in $B(A)$. Consequently, every nonzero summand occurring in $B(A)$ belongs to
$\var(A)$. Since varieties are closed under finite direct sums, it
follows that $B(A)\in\var(A),$ which gives $\textnormal{Id}(A)\subseteq \textnormal{Id}(B(A))$.} 

{\color{blue} Essa afirmação, $\textnormal{Id}(A)\subseteq \textnormal{Id}(B(A))$, já é verdadeira pelos comentários feitos após a definição de B(A). Sugiro que essa parte seja retirada.}
\end{comment}
We next show that $B(A)$ contains at least one quadratic model. Suppose,
to the contrary, that
\[
 [J_{11},J_{11}]
 =
 J_{10}J_{00}
 =
 J_{00}J_{01}
 =
 J_{01}J_{10}
 =
 J_{10}J_{01}
 =\{0\}.
\]
Lemma~\ref{lem:no-quadratic-mechanism} would then imply $c_n(A)\leq  \alpha n$, for some constant $\alpha$ and for all $n\geq 1$. On the other hand, since $A$ is fundamental and $s_A=2$,
Corollary~\ref{cor:quadratic-growth-criterion} gives a constant $\gamma>0$ such
that $c_n(A)\geq \gamma n^2$ for all sufficiently large $n$, a contradiction. Therefore
$\eps_2+\eps_4+\eps_{4^*}+\eps_5+\eps_6\geq1.$

We can now apply Lemma~\ref{lem:transfer} to obtain the inclusion $\Id(B(A))\subseteq\Id(A).$
Combining this result with Remark \ref{rem:B(A)}, we obtain
$\Id(A)=\Id(B(A)),$ and hence $A\simeqT B(A)$. This proves
the result.
\end{proof}

Two observations concerning the detector algebra $B(A)$ are in order. 

First, the direct-sum presentation of $B(A)$ is not necessarily
redundancy-free. For instance, suppose that
$\varepsilon_1=\varepsilon_4=1$. Since
$A_1\in\operatorname{var}(A_4),$
we have $\operatorname{Id}(A_4)\subseteq\operatorname{Id}(A_1)$, and
therefore
\[
\begin{aligned}
    \operatorname{Id}(A_1\oplus A_4)
      =\operatorname{Id}(A_1)\cap\operatorname{Id}(A_4)=\operatorname{Id}(A_4).
\end{aligned}
\]
Consequently, $A_1\oplus A_4\sim_T A_4,$
so the summand $A_1$ does not affect the $T$-ideal of $B(A)$ in this
case. Similarly, since $A_1^*\in\operatorname{var}(A_4^*),$
the conditions $\varepsilon_{1^*}=\varepsilon_{4^*}=1$ imply $A_1^*\oplus A_4^*\sim_T A_4^*.$ Thus, the detector algebra records the nonzero products of the radical components, but its presentation as a direct sum need not
be minimal with respect to PI-equivalence.

Second, although the preceding theorem yields $A\sim_T B(A),$ the direct sum $B(A)$ is not required to be fundamental. Nevertheless,
all its nonzero summands are fundamental and have semisimple parts
isomorphic to $F$. The next proposition shows that these summands can be
successively fused to produce a fundamental algebra $D$ satisfying $D\sim_T B(A).$
Moreover, since $B(A)$ contains at least one quadratic model, the
resulting algebra can be chosen with radical parameter $s_D=2$.

\begin{proposition}
\label{prop:converse}
Let $B$ be the direct sum of the members of a subset of
$\{A_1,A_1^*,A_2,A_4,A_4^*,A_5,A_6\},$ and assume that at least one of
$ A_2$, $A_4$, $A_4^*$, $A_5$ and $A_6$
occurs as a summand. Then there exists a fundamental algebra $D$ such
that $s_D=2$ and $D\simeqT B.$ Consequently, every PI-equivalence class appearing in
Theorem~\ref{thm:main} is realized by a fundamental algebra with
$s_D=2$.
\end{proposition}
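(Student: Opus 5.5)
Rather than fuse the summands abstractly, I will build one algebra $D=F+J$ with $J^3=\{0\}$ whose detector algebra is exactly $B$, and then invoke the results already proved. Write $B=\bigoplus_{M\in\mathcal S}M$. For each $M\in\mathcal S$, the model $M$ is a subalgebra of $UT_3$ or $UT_2$ whose semisimple part is spanned by a single idempotent $e_M$. Let $N_M=J(M)$. Put
\[
D=Fe+\bigoplus_{M\in\mathcal S}N_M,
\]
and define the products as follows: $N_MN_{M'}=\{0\}$ for $M\neq M'$, products inside $N_M$ are those of $M$, and $e$ acts on $N_M$ exactly as $e_M$ does. This is the ``glued'' algebra obtained by identifying all the idempotents $e_M$. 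Associativity reduces to associativity inside each $M$, since $e$ acts on each $N_M$ as an idempotent of $M$. So $D$ is a finite-dimensional algebra with $D=F+J(D)$ and $J(D)=\bigoplus_M N_M$. Moreover $J(D)^3=\{0\}$, and $J(D)^2\supseteq J(Q)^2=Fe_{13}\neq\{0\}$ for a quadratic model $Q\in\mathcal S$. Hence $s_D=2$.

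Next I compute the Peirce components of $D$ relative to $e$. They are $J(D)_{ik}=\bigoplus_M J(M)_{ik}$, and because the blocks are mutually orthogonal, every product or commutator space is the direct sum of the corresponding spaces of the $M$'s:
\[
C(D)=\bigoplus_M [J(M)_{11},J(M)_{11}],\qquad P_j(D)=\bigoplus_M P_j(M).
\]
So each parameter $\varepsilon_j(D)$ is the maximum over $M\in\mathcal S$ of $\varepsilon_j(M)$. A direct check on the seven models with matrix units gives their detector vectors:
\begin{itemize}
\item $A_1$ has only $\varepsilon_1$;
\item $A_1^*$ has only $\varepsilon_{1^*}$;
\item $A_2$ has only $\varepsilon_2$, since $J_{11}=J$ and $[e_{12},e_{23}]=e_{13}$;
\item $A_4$ has $\varepsilon_1,\varepsilon_4$;
\item $A_4^*$ has $\varepsilon_{1^*},\varepsilon_{4^*}$;
\item $A_5$ has $\varepsilon_1,\varepsilon_{1^*},\varepsilon_5$;
\item $A_6$ has $\varepsilon_1,\varepsilon_{1^*},\varepsilon_6$.
\end{itemize}
Therefore $B(D)$ is $B$ plus possibly extra copies of $A_1$ or $A_1^*$. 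Each such extra copy arises from some $A_4$, $A_5$ or $A_6$ (for $A_1$), or from $A_4^*$, $A_5$ or $A_6$ (for $A_1^*$), and that algebra is already in $B$. We have $A_1,A_1^*\in\operatorname{var}(A_5)$ and $A_1,A_1^*\in\operatorname{var}(A_6)$, by Lemma~\ref{A1A2A4} applied to these models, and likewise $A_1\in\operatorname{var}(A_4)$ and $A_1^*\in\operatorname{var}(A_4^*)$. So these extra summands do not change the $T$-ideal, and $B(D)\simeqT B$. Since some quadratic detector of $D$ is nonzero, Lemma~\ref{lem:transfer} together with Remark~\ref{rem:B(A)} gives $D\simeqT B(D)\simeqT B$.

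It remains to show that $D$ is fundamental. By Proposition~\ref{prop:fundamental-polynomial-growth}(3), and because $D\simeqT B$ has polynomial growth, it suffices to find, for all large $n$, a partition $\lambda\vdash n$ with $\lambda_2+\cdots=2$ and $m_\lambda(D)>0$. Cocharacters depend only on the $T$-ideal, so $m_\lambda(D)=m_\lambda(B)\geq m_\lambda(Q)$, because $\operatorname{Id}(B)\subseteq\operatorname{Id}(Q)$ makes $P_n(Q)$ a quotient of $P_n(B)$. By Lemmas~\ref{lem:A2} and~\ref{lem:A4-A5-A6}, $m_{(n-2,1,1)}(Q)=1$ for $n\geq4$, so $D$ is fundamental.

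The final claim of the proposition follows by applying this construction to $B=B(A)$ from Theorem~\ref{thm:main}. The only step I expect to take real care is the bookkeeping of the seven detector vectors: it must confirm that the glued algebra creates no new nonzero product between blocks (this is guaranteed by orthogonality) and that each spurious $A_1$ or $A_1^*$ is absorbed.
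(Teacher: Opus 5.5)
Your proof is correct, but it takes a different route from the paper's. The paper obtains $D$ by successively applying the fusion construction for fundamental algebras with isomorphic semisimple parts (\cite[Proposition~4.3]{GQV2026}, \cite[Lemma 2.17]{AK2024}). From that result it takes fundamentality, $D\simeqT B$, and $s_D=\max_M s_M$ without further argument. The explicit representative appears only in the subsequent remark, and it is exactly your glued algebra, with the idempotents identified two at a time instead of all at once.

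You instead verify these properties using results inside the paper. Your fundamentality step is the real gain. Once $D\simeqT B$ is known, the inequality $m_{(n-2,1,1)}(D)=m_{(n-2,1,1)}(B)\geq m_{(n-2,1,1)}(Q)=1$ together with Proposition~\ref{prop:fundamental-polynomial-growth}(3) settles it with no appeal to fusion theory. That proposition applies because the ground field has already been taken algebraically closed.

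Your PI-equivalence step is correct but heavier than needed. It runs through the seven detector vectors (which you computed correctly), the absorption of the spurious $A_1$ and $A_1^*$ into $A_4$, $A_4^*$, $A_5$, $A_6$, and Lemma~\ref{lem:transfer}. A shorter argument is available: $D$ is the subalgebra $\{(\alpha e_M+u_M)_M\}$ of $B$, and each coordinate projection $D\to M$ is a surjective homomorphism. Hence $\operatorname{Id}(B)\subseteq\operatorname{Id}(D)\subseteq\bigcap_M\operatorname{Id}(M)=\operatorname{Id}(B)$ at once, which is the argument in the paper's remark.

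What your longer route buys is the by-product $B(D)\simeqT B$. This says the detector algebra of the realizing fundamental algebra recovers the prescribed class up to absorbed linear summands, a consistency check on Theorem~\ref{thm:main}.
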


\begin{proof} By Proposition \ref{prop:quadratic-algebras-fundamental},
each algebra in $\{A_1,A_1^*,A_2,A_4,A_4^*,A_5,A_6\}$ is fundamental and has semisimple part isomorphic to $F$.  Apply successively the fusion construction for fundamental algebras having isomorphic semisimple parts, see \cite[Proposition~4.3]{GQV2026} and also \cite[Lemma 2.17]{AK2024}. This produces a fundamental algebra
$D$ satisfying $D\simeqT B.$ Moreover, the fusion construction preserves the maximum of the radical
parameters of its factors. Hence
\[
 s_D
 =
 \max\{s_M:M\text{ is a summand of }B\}.
\]
Since $s_{A_1}=s_{A_1^*}=1$ and every quadratic model has radical parameter equal to $2$, the assumption that $B$ contains a quadratic summand gives $s_D=2$.
\end{proof}
\begin{comment}
{\color{purple} Observe que a proposição anterior, assim como \cite[Proposition~4.3]{GQV2026}, apenas demonstra a existência dessa álgebra fundamental PI-equivalente à soma direta de fundamentais, sem apresentar um representante para a mesma. Em \cite{GQV2026}, estendemos um pouco essa idéia e as do Aljadeff e do Karasik e mostramos como construir essas álgebras fundamentais PI-equivalentes a somas diretas de outras fundamentais. Essa nova fundamental será uma álgebra de matrizes onde amarramos alguns termos. A seguir eu escrevo essas informações como uma breve observação que sugiro acrescentarmos.}
\end{comment}

\begin{remark} Notice that the preceding proposition establishes the existence of the fundamental algebra, but does not provide an explicit representative for it. However, using the ideas contained in \cite{GQV2026-2}, one can construct such a fundamental algebra as follows. Let $A\subseteq M_k(F)$ and $B\subseteq M_l(F)$ be two fundamental algebras with $\overline{A}\cong\overline{B}\cong F$. By \cite[Proposition~4.3]{GQV2026}, there exists a fundamental algebra $D\sim_{T}A\oplus B$. To construct $D$, write $A=Fe_A+J(A)$ and $B=Fe_B+J(B)$,
where $e_A$ and $e_B$ are the identities of their respective
semisimple parts, and set
\[
D=
\left\{
\begin{pmatrix}
\alpha e_A+u & 0\\
0 & \alpha e_B+v
\end{pmatrix}
\;\middle|\;
\alpha\in F,\ u\in J(A),\ v\in J(B)
\right\}
\subseteq M_{k+\ell}(F).
\]
Thus, the two semisimple components are replaced by a single diagonal copy of $F$,
while retaining both radicals unchanged.
Since $D\subseteq A\oplus B$ and the block projections map
$D$ onto $A$ and $B$, we obtain $D\sim_T A\oplus B$.
Moreover, $J(D)=\operatorname{diag}(J(A),J(B))$, so
$s_D=\max\{s_A,s_B\}$.
By \cite[Proposition~4.3]{GQV2026}, $D$ is fundamental.
\end{remark}

\black 

We can now state the complete classification of varieties with quadratic codimension growth.

\begin{comment}
{\color{purple}Acredito que a próxima demonstração pode ser simplificada, observe que, por \cite[Proposition 4.8]{GQV2026}, toda álgebra $A$ com crescimento polinomial é PI-equivalente a uma soma direta $B \oplus N$, onde $B$ é fundamental. Se $A$ tem crescimento quadrático, então $B$ é uma fundamental com crescimento quadrático. Logo, o resultado segue pelo Teorema \ref{thm:main} juntamente com o Corolário \ref{cor:quadratic-growth-criterion}. Além disso, pelos comentários feitos após o Teorema \ref{thm:bounded-colength}, já está claro que o corpo poderia ser assumido como algebricamente fechado. Vou escrever a seguir a sugestão para o próximo corolário.}{\color{blue}
\end{comment} 

\begin{corollary}
\label{cor:classification-algebras}
Let \(A\) be an algebra over a field \(F\) of characteristic zero.
Then \(A\) has quadratic codimension growth if and only if there
exist a nilpotent \(F\)-algebra \(N\) and parameters $\varepsilon_1,\varepsilon_{1^*},\varepsilon_2,
\varepsilon_4,\varepsilon_{4^*},\varepsilon_5,\varepsilon_6
\in\{0,1\},$ with $\varepsilon_2+\varepsilon_4+\varepsilon_{4^*}
+\varepsilon_5+\varepsilon_6\geq1,$ such that
\[
A\sim_T
N\oplus A_1^{\varepsilon_1}
\oplus(A_1^*)^{\varepsilon_{1^*}}
\oplus A_2^{\varepsilon_2}
\oplus A_4^{\varepsilon_4}
\oplus(A_4^*)^{\varepsilon_{4^*}}
\oplus A_5^{\varepsilon_5}
\oplus A_6^{\varepsilon_6}.
\]
\end{corollary}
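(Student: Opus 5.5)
We prove the two directions separately. The ``if'' direction is a codimension computation; the ``only if'' direction reduces to Theorem~\ref{thm:main} through the fundamental decomposition.

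\emph{Sufficiency.} Suppose $A\sim_T N\oplus\bigoplus M$ with the stated parameters. Codimensions of a direct sum satisfy
\[
\max_i c_n(M_i)\le c_n\Bigl(\bigoplus_i M_i\Bigr)\le \sum_i c_n(M_i),
\]
since $\operatorname{Id}$ of a direct sum is the intersection of the $\operatorname{Id}$'s. A nilpotent $N$ satisfies $c_n(N)=0$ for $n$ large. By Lemmas~\ref{lem:A1-A1star}, \ref{lem:A2} and~\ref{lem:A4-A5-A6}, every summand has codimension at most $n(n-1)$ for $n\ge 4$. Because $\eps_2+\eps_4+\eps_{4^*}+\eps_5+\eps_6\geq1$, at least one summand has $c_n$ of order $n^2$. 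Hence $c_n(A)$ is bounded above and below by quadratic functions.

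\emph{Necessity.} Suppose $A$ has quadratic growth. By Corollary~\ref{cor:finite-dimensional-reduction}, $A$ is PI-equivalent to a finite-dimensional algebra.
\begin{enumerate}
\item Extend scalars to $\overline F$. The discussion in Section~2 shows this preserves codimensions. Since the model algebras are defined over $\mathbb Q$ and $\operatorname{Id}_K(A_K)=K\otimes\operatorname{Id}_F(A)$, a PI-equivalence over $\overline F$ descends to $F$ by intersecting with $F\langle X\rangle$. For the nilpotent part we may take $N$ to be, for instance, the relatively free algebra of $\operatorname{var}(N_{\overline F})$ over $F$, truncated at a degree beyond its nilpotency index. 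This descent of $N$ is the one point that needs some care.
\item By Theorem~\ref{thm:fundamental-decomposition}, $A\sim_T B_1\oplus\cdots\oplus B_m$ with each $B_j$ fundamental.
\item Each $B_j$ lies in $\operatorname{var}(A)$, so it has at most quadratic growth. By Proposition~\ref{prop:fundamental-polynomial-growth}, each $B_j$ is either nilpotent, isomorphic to $F$, or of the form $F+J$ with $s_{B_j}\le 2$, by Theorem~\ref{thm:fundamental-growth-degree}.
\item Collect the nilpotent summands into $N$.
\item For summands with $s_{B_j}=2$, Theorem~\ref{thm:main} gives $B_j\sim_T B(B_j)$. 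At least one such summand exists: otherwise Theorem~\ref{thm:fundamental-growth-degree} would bound $c_n(A)$ by a linear function.
\item For summands with $s_{B_j}=1$, apply the linear classification \cite[Theorem~22]{GLM2005}. This expresses $B_j$ up to PI-equivalence via $F$, $A_1$, $A_1^*$ and a nilpotent part. Alternatively, run the argument of Lemma~\ref{lem:no-quadratic-mechanism}, since all quadratic detectors vanish when $J^2=0$.
\item Summands isomorphic to $F$, and copies of $F$ arising in step 6, are absorbed, because $F\in\operatorname{var}(Q)$ for any quadratic model $Q$ present.
\end{enumerate}

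Finally, merge repeated summands using $\operatorname{Id}(M\oplus M)=\operatorname{Id}(M)$. Each algebra in $\mathcal L\cup\mathcal Q$ then occurs at most once, which gives parameters in $\{0,1\}$.

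The main obstacle is step 6 together with the absorption bookkeeping in step 7. The cleanest fix is to apply Lemma~\ref{lem:no-quadratic-mechanism}'s conclusion directly: it already shows such a $B_j$ is PI-equivalent to one of $F\oplus N'$, $A_1\oplus N'$, $A_1^*\oplus N'$ or $A_1\oplus A_1^*\oplus N'$, and its hypothesis $J^3=0$ holds trivially when $s_{B_j}=1$.
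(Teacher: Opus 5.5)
Your argument is correct, but it takes a different route from the paper's. After passing to $K=\overline F$ and a finite-dimensional $E$ with $A_K\sim_T E$, the paper invokes \cite[Proposition~4.8]{GQV2026}. That result already fuses all nonnilpotent fundamental components into a single fundamental algebra, $E\sim_T H\oplus N_K$. Since $c_n(E)=c_n(H)$ for large $n$, Corollary~\ref{cor:quadratic-growth-criterion} gives $s_H=2$ immediately, and Theorem~\ref{thm:main} finishes with no further bookkeeping. The paper descends the nilpotent part simply by restricting scalars of $N_K$ to $F$, using $\operatorname{Id}_F(N)=\operatorname{Id}_K(N_K)\cap F\langle X\rangle$.

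You instead start from the classical decomposition of Theorem~\ref{thm:fundamental-decomposition} into several fundamental summands. You then sort them by $s_{B_j}\le 2$ and apply Theorem~\ref{thm:main} only to the summands with $s_{B_j}=2$, at least one of which exists by the linear-bound argument. You treat the $s_{B_j}=1$ summands with the linear classification, absorb copies of $F$ via $F\in\operatorname{var}(Q)$, and deduplicate with $\operatorname{Id}(M\oplus M)=\operatorname{Id}(M)$. All of these steps are sound. Your route avoids the fusion result of \cite{GQV2026} and relies only on the standard decomposition into fundamental algebras plus the known linear classification, at the cost of this componentwise bookkeeping. The paper's route is shorter but depends on the stronger structural input.

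Two small remarks. First, for the $s_{B_j}=1$ summands, cite \cite[Theorem~22]{GLM2005} directly rather than Lemma~\ref{lem:no-quadratic-mechanism}: the lemma's statement only gives a linear codimension bound, and the PI-equivalence you want appears only inside its proof. Second, your relatively free construction of $N$ over $F$ works, and no truncation is needed since the $T$-ideal $\operatorname{Id}_K(N_K)\cap F\langle X\rangle$ already contains $x_1\cdots x_d$. Still, restriction of scalars, as in the paper, is simpler.
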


\begin{proof}
Let \(K=\overline F\) and set \(A_K=K\otimes_F A\). Since
codimensions are preserved under scalar extension, \(A_K\) has
quadratic codimension growth. By Corollary~\ref{cor:finite-dimensional-reduction}, there exists a
finite-dimensional \(K\)-algebra \(E\) such that $A_K\sim_T E.$ Applying \cite[Proposition 4.8]{GQV2026} to \(E\),
and taking the nilpotent summand to be zero when \(E\) is already
fundamental, we obtain $E\sim_T H\oplus N_K,$ where \(H\) is fundamental and \(N_K\) is nilpotent.

Choose \(d\geq1\) such that \(N_K^d=0\). For every \(n\geq d\),
every multilinear polynomial of degree \(n\) is an identity of
\(N_K\). Hence $P_n\cap\operatorname{Id}(H\oplus N_K)
=
P_n\cap\operatorname{Id}(H),$ and therefore $c_n(E)=c_n(H)$, for all $n\geq d.$ It follows that \(H\) is nonnilpotent and has quadratic codimension
growth. By Corollary~\ref{cor:quadratic-growth-criterion}, \(s_H=2\).
Theorem~\ref{thm:main} now gives
\[
H\sim_T
A_{1,K}^{\varepsilon_1}
\oplus(A_{1,K}^*)^{\varepsilon_{1^*}}
\oplus A_{2,K}^{\varepsilon_2}
\oplus A_{4,K}^{\varepsilon_4}
\oplus(A_{4,K}^*)^{\varepsilon_{4^*}}
\oplus A_{5,K}^{\varepsilon_5}
\oplus A_{6,K}^{\varepsilon_6},
\]
where $A_{i,K}:=K\otimes_F A_i$ and at least one quadratic parameter is nonzero.

Regard \(N_K\) as an \(F\)-algebra by restriction of scalars and
denote it by \(N\). It remains nilpotent. Since \(N\) and \(N_K\)
have the same underlying ring,
$\operatorname{Id}_F(N)
=
\operatorname{Id}_K(N_K)\cap F\langle X\rangle.$ Moreover, for every model algebra \(A_i\),
$\operatorname{Id}_F(A_i)
=
\operatorname{Id}_K(A_{i,K})\cap F\langle X\rangle.$ Contracting the equality of \(T\)-ideals over \(K\) to
\(F\langle X\rangle\) gives the required PI-equivalence over \(F\).

Conversely, suppose that \(A\) is PI-equivalent to the displayed direct sum. Since PI-equivalent algebras have the same codimension sequence, it is enough to consider that direct sum. At least one quadratic model occurs, while every remaining model algebra has at most quadratic codimension growth and the nilpotent summand has eventually zero codimensions. Therefore, \(A\) has quadratic
codimension growth.
\end{proof}

\black

Recall that varieties of at most linear codimension growth were shown to be generated by finite direct sums of algebras generating minimal varieties of at most linear growth, possibly together with a nilpotent algebra (see \cite[Theorem 22]{GLM2005}). Combining this result with our classification of varieties of quadratic growth, we obtain a unified description of all varieties of at most quadratic growth: they are generated by finite direct sums of algebras generating minimal varieties of at most quadratic growth, again possibly together with a nilpotent algebra. This yields the following corollary.

\begin{corollary}
Let $\mathcal V$ be a variety of associative algebras over a field $F$ of characteristic zero. Then there exists a constant \(\alpha>0\) such that
$c_n(\mathcal V)\leq \alpha n^2,$ for every $ n\geq 1,$ if and only if one of the following alternatives holds:
\begin{enumerate}

\item $\mathcal V=\operatorname{var}(N)$ for some nilpotent algebra
$N$;
\item $\mathcal V=\operatorname{var}(F\oplus N)$ for some nilpotent
algebra $N$;
\item there exist a nonempty subset
$S\subseteq\{A_1,A_1^*,A_2,A_4,A_4^*,A_5,A_6\}$ and a nilpotent algebra $N$ such that
\[
\mathcal V=\operatorname{var}\left(
N\oplus\bigoplus_{M\in S}M\right).
\]
\end{enumerate}
Moreover, in item $(3)$, the variety \(\mathcal V\) has
quadratic growth if and only if $S\cap\{A_2,A_4,A_4^*,A_5,A_6\}\neq\varnothing.$
\end{corollary}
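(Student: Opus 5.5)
We prove the two directions separately, reducing to Corollary~\ref{cor:classification-algebras} together with the linear classification \cite[Theorem~22]{GLM2005}.

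For the ``if'' direction, assume $\mathcal V$ is one of (1)--(3). In case (1), $c_n(\mathcal V)=0$ for large $n$. In case (2), $\Id(F\oplus N)$ agrees with $\Id(F)$ in high degrees, so $c_n=1$ eventually. In case (3), we use $\Id(\bigoplus M)=\bigcap\Id(M)$. This gives $c_n(\bigoplus_{i} M_i)\leq\sum_i c_n(M_i)$, since $P_n(\bigoplus M_i)$ embeds into $\bigoplus P_n(M_i)$. The nilpotent summand contributes nothing for $n$ large. By Lemmas~\ref{lem:A1-A1star}, \ref{lem:A2} and \ref{lem:A4-A5-A6}, every model has $c_n\leq n(n-1)$, so $c_n(\mathcal V)\leq 7n^2$ eventually. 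Enlarging $\alpha$ covers the finitely many small $n$, using $c_n\leq n!$.

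For the ``only if'' direction, assume $c_n(\mathcal V)\leq\alpha n^2$. Then $\mathcal V$ has polynomial growth, so by Corollary~\ref{cor:finite-dimensional-reduction} it is generated by a finite-dimensional algebra $A$. After scalar extension, Theorem~\ref{thm:fundamental-decomposition} gives $A\sim_T H_1\oplus\cdots\oplus H_m$ with each $H_i$ fundamental. Each $H_i$ satisfies $c_n(H_i)\leq c_n(A)$. We sort the summands as follows.
\begin{itemize}
\item The nilpotent $H_i$ are gathered into $N$.
\item Each nonnilpotent $H_i$ satisfies $s_{H_i}\leq 2$ by Theorem~\ref{thm:fundamental-growth-degree}.
\item If $s_{H_i}=0$, then $H_i\cong F$.
\item If $s_{H_i}=2$, Theorem~\ref{thm:main} gives $H_i\sim_T B(H_i)$.
\item If $s_{H_i}=1$, then $H_i=F+J$ with $J^2=0$. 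Hence $C=P_4=P_{4^*}=P_5=P_6=0$, and the case analysis in the proof of Lemma~\ref{lem:no-quadratic-mechanism} gives $H_i\sim_T$ one of $F$, $A_1$, $A_1^*$, $A_1\oplus A_1^*$, modulo a nilpotent summand. Alternatively, we cite \cite[Theorem~22]{GLM2005} directly.
\end{itemize}
Collecting all summands, $A$ is PI-equivalent to $N\oplus F^{\delta}\oplus\bigoplus_{M\in S}M$. If $S\neq\varnothing$, every model in $S$ contains a copy of $F$, so $F\in\var(\bigoplus_S M)$ and the $F$ summand can be dropped, giving (3). If $S=\varnothing$, we are in case (1) or (2). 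Finally, we descend from $\overline F$ to $F$ by contracting the $T$-ideals, exactly as in the proof of Corollary~\ref{cor:classification-algebras}.

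For the final ``moreover'' claim, if $S$ contains a quadratic model $Q$, then $c_n(\mathcal V)\geq c_n(Q)$, which is quadratic. If $S\subseteq\{A_1,A_1^*\}$, the bound is at most $2n$. The only mildly delicate step is the $s=1$ reduction. There I rely on the argument of Lemma~\ref{lem:no-quadratic-mechanism}, which applies verbatim because all the detectors vanish when $J^2=0$.
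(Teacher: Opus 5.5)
Your proof is correct, but it takes a somewhat different route from the paper's. The paper gives no separate argument for this corollary. It simply combines the linear classification \cite[Theorem~22]{GLM2005} with Corollary~\ref{cor:classification-algebras}, and the latter rests on \cite[Proposition~4.8]{GQV2026}, which reduces everything to a single fundamental algebra plus a nilpotent summand. That combination tacitly uses a dichotomy: a variety with $c_n\le\alpha n^2$ either has at most linear growth or genuinely quadratic growth in the two-sided sense of the paper. This is true, but the paper leaves it implicit.

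You instead decompose $A_K$ by Theorem~\ref{thm:fundamental-decomposition} into several fundamental summands and bound each $s_{H_i}\le 2$ via Theorem~\ref{thm:fundamental-growth-degree}. You then classify summand by summand:
\begin{itemize}
\item for $s=0$ you get the base field;
\item for $s=1$ you use the argument of Lemma~\ref{lem:no-quadratic-mechanism}, which applies because all detectors lie in $J^2=\{0\}$;
\item for $s=2$ you use Theorem~\ref{thm:main}.
\end{itemize}
Finally you merge, absorb the $F$ summand, and descend as in Corollary~\ref{cor:classification-algebras}.

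What this buys you is independence from the fusion result. The degree trichotomy also comes out automatically, and you verify the ``moreover'' clause directly from the explicit codimensions of the models. The cost is a little extra bookkeeping that you leave unstated. Repeated models arising from different $H_i$ collapse because $M\oplus M\sim_T M$, which is what lets you index by a set $S$. Finitely many nilpotent summands also sum to a nilpotent algebra. Both points are trivial.
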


\end{document}